\documentclass{article}

 \usepackage[letterpaper,top=2cm,bottom=2cm,left=3cm,right=3cm,marginparwidth=1.75cm]{geometry}

\usepackage{amsmath}
\usepackage{amsthm}
\usepackage{aliascnt}
\usepackage{mathtools}
\usepackage{algorithm}
\usepackage{algpseudocode}
\usepackage{float}
\usepackage{booktabs}
\usepackage{graphicx}
\usepackage{adjustbox}
\usepackage[colorlinks=true, allcolors=blue]{hyperref}
\usepackage{cleveref}
\usepackage{amssymb}
\usepackage{stmaryrd}
\usepackage{xcolor}
\usepackage{tikz}

\newtheorem{theorem}{Theorem}
\newaliascnt{lemma}{theorem}
\newtheorem{lemma}[lemma]{Lemma}
\aliascntresetthe{lemma}
\newaliascnt{corollary}{theorem}

\aliascntresetthe{corollary}

\newtheorem{proposition}{Proposition}

\crefname{theorem}{Theorem}{Theorems}
\Crefname{theorem}{Theorem}{Theorems}
\crefname{lemma}{Lemma}{Lemmas}
\Crefname{lemma}{Lemma}{Lemmas}
\crefname{corollary}{Corollary}{Corollaries}
\Crefname{corollary}{Corollary}{Corollaries}
\crefname{definition}{Definition}{Definitions}
\Crefname{definition}{Definition}{Definitions}
\crefname{proposition}{Proposition}{Propositions}
\Crefname{proposition}{Proposition}{Propositions}
\crefname{prop}{Proposition}{Propositions}
\Crefname{prop}{Proposition}{Propositions}

\DeclareMathOperator{\tr}{tr}

\DeclareMathOperator{\diag}{diag}

\DeclareMathOperator{\argmax}{argmax}
\DeclarePairedDelimiter{\norm}{\|}{\|}
\newcommand{\mat}[1]{\mathbf{#1}}

\newcommand{\lowrank}[1]{\llbracket #1 \rrbracket}

\newcommand{\I}{\mathcal{I}}
\newcommand{\J}{\mathcal{J}}
\newcommand{\prob}{\mathbb{P}}

\newcommand{\Id}{\mathbf{I}}
\newcommand{\bm}[1]{\boldsymbol{#1}}
\renewcommand{\vec}[1]{\bm{#1}}
\newcommand{\EX}{\mathbb{E}}

\title{Low-Rank Approximation by Randomly Pivoted LU}
\author{
Marc Aurèle Gilles\thanks{Department of Mathematics, Princeton University, Princeton, NJ 08544, USA (gilles@princeton.edu)}
\and
Heather Wilber\thanks{Applied Mathematics Department, University of Washington, Seattle, WA 98195, USA (hdw27@uw.edu)}
}

\begin{document}
\setcounter{footnote}{1}
\maketitle

\begin{abstract}
The low-rank approximation properties of Randomly Pivoted LU (RPLU), a variant of Gaussian elimination where pivots are sampled proportional to the squared entries of the Schur complement, are analyzed. 
It is shown that the RPLU iterates converge geometrically in expectation for matrices with rapidly decaying singular values. RPLU outperforms existing low-rank approximation algorithms in two settings: first, when memory is limited, RPLU can be implemented with $\mathcal{O}(k^2 + m + n)$ storage and $\mathcal{O}( k(m + n)+  k\mathcal{M}(\mat{A}) + k^3)$ operations, where $\mathcal{M}(\mat{A})$ is the cost of a matvec with $\mat{A}\in\mathbb{C}^{n\times m}$ or its adjoint, for a rank-$k$ approximation. Second, when the matrix and its Schur complements share exploitable structure, such as for Cauchy-like matrices.
The efficacy of RPLU is illustrated with several examples, including applications in rational approximation and solving large linear systems on GPUs.  
\end{abstract}

\section{Introduction} \label{sec:intro}

Gaussian elimination (GE) with pivoting is the standard algorithm for solving systems of linear equations.
Given  $\mat{A} \in \mathbb{C}^{n \times n}$, it constructs a pivoted LU factorization  $\mat{P}\mat{A}\mat{Q} = \mat{{L}}\mat{{U}}$, where $\mat{{L}}$ and $\mat{{U}}$ are lower and upper triangular matrices respectively, and $\mat{P}$ and $\mat{Q}$ are permutation matrices.
The numerical stability of the computation depends on the strategy applied to sequentially choose the permutation matrices. These are called \textit{pivoting strategies}. The strategy used most widely in practice is partial pivoting, which selects $\mat{P}$ relatively cheaply using only the leading entries of rows of the residual matrix, and sets $\mat{Q} = \mat{I}$. However, complete pivoting, which permutes both rows and columns at each step, is also a popular choice and has stronger theoretical stability guarantees~\cite{wilkinson1961error,trefethen2022numerical}.

By stopping GE early, complete pivoting can also be used to produce structured low-rank approximations to $\mat{A}$. We set $\mat{\hat{A}} \approx \mat{\hat{L}}\mat{\hat{U}}$, where $\mat{\hat{L}} = \mat{P}^T\mat{L}_{:, 1:k}$, $\mat{\hat{U}} = \mat{U}_{1:k, :} \mat{Q}^T$, and $\mat{L}_{:, 1:k}$ and $\mat{U}_{1:k, :}$ are the first $k$ columns and rows of $\mat{L}$ and $\mat{U}$, respectively. Triangularity of the LU factors is no longer important in this setting. This idea is known in different contexts under many different names, including adaptive cross approximation~\cite{zhao2005adaptive}, GE with complete pivoting~\cite{townsend2015continuous}, the method of Geddes--Newton series expansions~\cite{chapman2003generalized}, and incomplete LU factorization~\cite{chan1997approximate}. In the special case where $\mat{A}$ is square and positive semidefinite (PSD), related algorithms making use of Cholesky decompositions are known as incomplete or partial Cholesky factorizations~\cite{rpchol}.

\begin{figure}[t]
    \centering
    \includegraphics[width=0.7\textwidth]{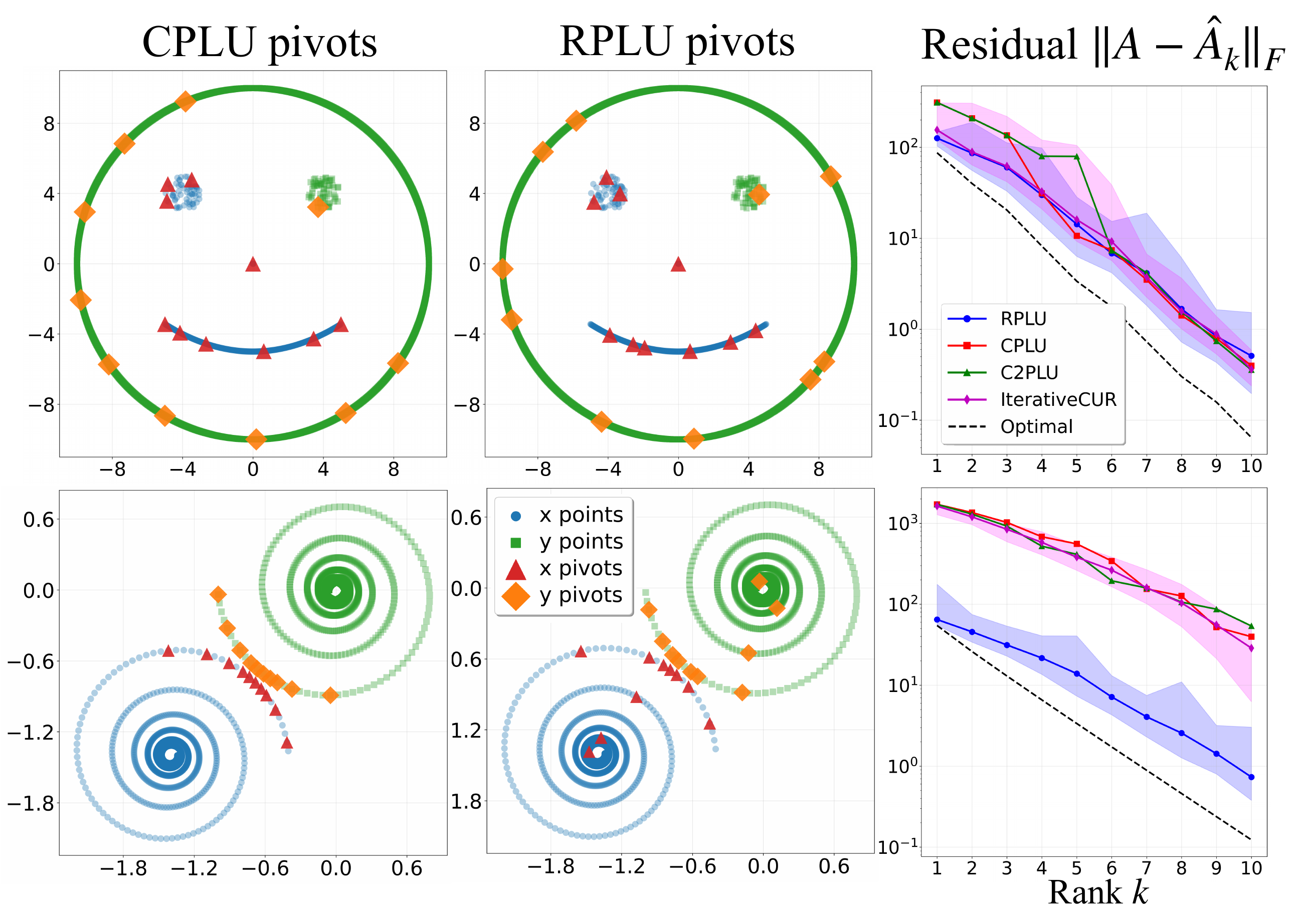}
    \caption{Performance of RPLU and three greedy schemes CPLU, C2PLU and IterativeCUR~\cite{pritchard2025fast} ($b=5$) on a Cauchy matrix $A_{ij} = 1 / (x_i - y_j)$ for points $\{x_i\}$ (blue) and $\{y_j\}$ (green) forming a smiley (top) and two spirals (bottom). Left: pivots selected by CPLU. Middle: pivots from one run of RPLU. Right: approximation error vs.\ rank $k$, with mean over 30 runs and min/max as a shaded region for RPLU and IterativeCUR. In the spiral example, greedy schemes select only ``outlier" points, whereas RPLU also samples from the cluster, yielding lower error. Similar behavior is observed for PSD matrices~\cite{rpchol}.}
    \label{fig:rplu_intro}
\end{figure}

A partial LU factorization of $\mat{A} \in \mathbb{C}^{n \times m}$ produces the following sequence of matrices:
\begin{align}
\mat{A}^{(0)} &= \mat{A}, \quad \mat{A}^{(k)} = \mat{A}^{(k-1)} - \frac{1}{\mat{A}^{(k-1)}_{i_k, j_k}} \mat{A}^{(k-1)}_{:, j_k} \mat{A}^{(k-1)}_{i_k,:} \label{eq:residual_update} \\ 
\mat{\hat{A}}^{(0)} &= \mat{0}, \quad \mat{\hat{A}}^{(k)} = \mat{\hat{A}}^{(k-1)} + \frac{1}{\mat{A}^{(k-1)}_{i_k, j_k}} \mat{A}^{(k-1)}_{:, j_k} \mat{A}^{(k-1)}_{i_k,:} \label{eq:approximation_update}
\end{align}
where $\mat{A}^{(k)}$ is the residual matrix after $k$ steps of GE and $\mat{\hat{A}}^{(k)}$ is a rank-$k$ approximation of $\mat{A}$. Here $\mat{A}^{(k)}_{i, :}$ and $\mat{A}^{(k)}_{:, j}$ denote the $i$-th row and $j$-th column of $\mat{A}^{(k)}$. We refer to the pair $(i_k, j_k)$ as the \textit{pivot} chosen at step $k$.
Complete pivoting selects $(i_k, j_k) = \argmax_{i,j} |\mat{A}^{(k-1)}_{i,j}|$, the largest absolute entry of the residual matrix. We call this ``completely pivoted LU'' (CPLU).

In~\cite{rpchol}, the authors showed that for PSD matrices, injecting randomness into pivot selection can substantially improve approximation quality over greedy schemes in some settings. In this paper we consider an analogous algorithm for the nonsymmetric setting. We apply a randomized version of complete pivoting, where the pivot $(i_k, j_k)$ at step $k$ is sampled according to the probability distribution:
\begin{equation}
\prob(i, j\mid \mat{A}^{(k-1)}) \;=\; \frac{|\mat{A}^{(k-1)}_{i, j}|^2}{\|\mat{A}^{(k-1)}\|_F^2}.
\end{equation}
We call the resulting method ``Randomly Pivoted LU'' (RPLU), analogous to the Randomly Pivoted Cholesky (RPCholesky) method in~\cite{rpchol}. A basic implementation is shown in \Cref{alg:rplu}.

\begin{algorithm}[h!]
  \caption{Randomly pivoted LU (Na\"ive implementation)}
  \label{alg:rplu}
  \textbf{Input:} Matrix $\mat{A} \in \mathbb{C}^{n \times m }$; desired rank $r$.\\
  \textbf{Output:} Matrices $\mat{L} \in \mathbb{C}^{n\times r}$ and  $\mat{U} \in \mathbb{C}^{r\times m}$ defining the approximation $\hat{\mat{A}} = \mat{L}\mat{U}$
  \begin{algorithmic}
    \State Initialize $\mat{L} \leftarrow \mat{0}_{n\times r}$, $\mat{U} \leftarrow \mat{0}_{r\times m}$, $\mat{A}^{(0)} \leftarrow \mat{A}$
    \For{$k = 1$ to $r$}
      \State Sample pivot $\bigl(i_k, j_k\bigr) \sim \bigl|\mat{A}^{(k-1)}\bigr|^2 \big/ \|\mat{A}^{(k-1)}\|_F^2$  \Comment{Sample pivot}
      \State $\mat{L}(:,k) \leftarrow \mat{A}^{(k-1)}(:,j_k)\big/\mat{A}^{(k-1)}(i_k,j_k)$
      \State $\mat{U}(k,:) \leftarrow \mat{A}^{(k-1)}(i_k,:)$
      \State $\mat{A}^{(k)} \leftarrow \mat{A}^{(k-1)} - \mat{L}(:,k)\,\mat{U}(k,:)$  \Comment{Update residual}
    \EndFor
  \end{algorithmic}
\end{algorithm}

Our main theoretical result (see~\cref{theorem:doubling}) shows that RPLU will return a good rank-$k$
approximation to $\mat{A}$ if its singular values decay at the rate $\mathcal{O}(\rho^k)$ with $\rho < 1/2$. This is an improvement on deterministic complete pivoting, which requires $\rho < 1/4$ in a worst-case scenario~\cite{cortinovis2020maximum}. As shown in \Cref{fig:rplu_intro}, there are settings where RPLU decisively outperforms CPLU and other greedy pivoting methods. 
 
Our second contribution addresses efficient implementation. 
Unlike the PSD case, where one only needs to consider diagonal entries for pivoting, iterations of randomly (or completely) pivoted LU are expensive: each pivot selection and residual update touches the full matrix, costing $\mathcal{O}(nm)$ operations. However, we show that RPLU can be implemented much more efficiently for many structured matrix classes. For Cauchy-like matrices, we employ fast Schur complement updates via displacement structure~\cite{boros2002pivoting,gohberg1995fast,heinig2012inversion, pan2000superfast}, along with Barnes-Hut-style row norm estimates~\cite{barnes1986hierarchical}. For matrices with fast matvecs (e.g., Toeplitz, sparse, or kernel matrices), we introduce a memory-efficient variant achieving $\mathcal{O}(k(n+m) + k^3 + k\mathcal{M}(\mat{A}) + k\mathcal{M}(\mat{A}^T))$ operations and $\mathcal{O}(k^2 + n + m)$ storage, where $\mathcal{M}(\mat{A})$ is the matvec cost. This variant is especially effective on memory-constrained hardware such as GPUs: in~\Cref{sec:practical_algorithm}, we construct rank-$1000$ approximations to matrices with $30$ million rows and columns in minutes, which would require half a terabyte for dense storage.

 The remainder of this paper is organized as follows. \Cref{sec:background} reviews background on low-rank decompositions. \Cref{sec:results} presents convergence results for RPLU. \Cref{sec:practical_algorithm} describes an efficient implementation for large structured matrices with numerical experiments. \Cref{sec:cauchy} presents implementation and numerical results for Cauchy-like matrices.

\section{Background} \label{sec:background}

\subsection{Notation}
Let $\mat{A}\in\mathbb{C}^{n\times m}$ with entries $a_{i,j}$ for $1 \leq i \leq n$ and $1 \leq j \leq m$. Without loss of generality, we assume $n \leq m$. We use MATLAB-style notation where $\mat{A}_{:,j}$ denotes the $j$-th column, $\mat{A}_{i,:}$ denotes the $i$-th row, and $\mat{A}_{\I,\J}$ denotes the submatrix with rows $\I \subset \{1, \ldots, n\}$ and columns $\J \subset \{1, \ldots, m\}$.
Let the singular values of $\mat{A}$ be given by $\sigma_1\ge\cdots\ge\sigma_{n}\ge0$. We define the following norms:
\begin{align*}
\|\mat{A}\|_F = \sqrt{\sum_{i=1}^n \sigma_i^2},  \qquad
\|\mat{A}\|_{\star} = \sum_{i=1}^n \sigma_i, \qquad
\|\mat{A}\|_{2} = \sigma_1.
\end{align*}
We denote a PSD matrix $\mat{A} \in \mathbb{C}^{n\times n}$ by $\mat{A} \succeq 0$, and use $\preceq$ as the partial order on $\mathbb{C}^{n\times n}$ defined by $\mat{A} \preceq \mat{B}$ if $\mat{A} - \mat{B} \succeq 0$. We denote the trace of a matrix $\mat{A}$ by $\tr(\mat{A}) = \sum_{i=1}^n a_{ii}$, and recall that if $\mat{A} \succeq 0$, then $\tr(\mat{A}) = \|\mat{A}\|_{\star} $.

\subsection{Optimal low-rank approximations}
The singular value decomposition of $\mat{A}$ is given by $\mat{A} = \mat{U}\mat{\Sigma}\mat{V}^*$ where $\mat{U}\in\mathbb{C}^{n\times n}$ and $\mat{V}\in\mathbb{C}^{m\times m}$ are unitary and $\mat{\Sigma}\in\mathbb{R}^{n\times m}$ is diagonal with entries $\Sigma_{jj} = \sigma_j$ for $1 \leq j \leq n$. The matrix $\lowrank{\mat{A}}_{k} = \mat{U}_{:\,,\,1 : k} \mat{\Sigma}_{1 : k, 1 : k} \mat{V}_{:\,,\,1 : k}^*$ is called the rank-$k$ truncated SVD of $\mat{A}$. 
The Eckart-Young-Mirsky theorem states that  $\lowrank{\mat{A}}_{k}$ is optimal among all rank-$k$ matrices in any unitarily invariant norm. In particular,
\begin{align*}
\min_{\mathrm{rank}(\mat{X})\le k} \|\mat{A}-\mat{X}\|_F \,=\, \|\mat{A}-\lowrank{\mat{A}}_{k}\|_F \,&=\, \Bigl(\sum_{j>k}\sigma_j^2\Bigr)^{1/2}, \\
\min_{\mathrm{rank}(\mat{X})\le k} \|\mat{A}-\mat{X}\|_2 \,=\, \|\mat{A}-\lowrank{\mat{A}}_{k}\|_2 \,&=\, \sigma_{k+1}.
\end{align*}
While optimal, computing the truncated SVD is often prohibitively expensive for large problems, requiring $\mathcal{O}(knm)$ operations. A large body of work has developed randomized algorithms that approximate the rank-$k$ SVD more efficiently (see~\cite{halko2011finding} for a survey). Among these, randomized SVD has become a gold-standard for approximately computing the SVD of a matrix due to some favorable properties: (1) improved complexity, e.g. $\mathcal{O}( \log(k) nm)$ for a general matrix, and better for structured matrices; (2) strong theoretical guarantees, e.g. with high probability, the error is bounded by a constant factor of the optimal error; (3) use of level 3 BLAS operations, making it highly efficient in practice and particularly well-suited for modern hardware (e.g. GPUs).

\subsection{Feature-preserving decompositions}

In many applications, it is important for either interpretability or numerical efficiency to have a low-rank approximation to $\mat{A}$ that preserves certain features of the matrix, such as sparsity, non-negativity~\cite{mahoney2009cur}, or a meaningful connection to variables and features linked to column or row indices. In such cases, it is natural to consider low-rank factorizations comprised of subsets of the rows and columns of $\mat{A}$, such as CUR decompositions or interpolative decompositions~\cite{cheng2005compression,stewart1999four,berry2005algorithm,goreinov1997theory,goreinov2001maximal,mahoney2009cur}.

CUR decompositions have the form $\mat{A} \approx \mat{C}\mat{U}\mat{R}$, where $\mat{C} = \mat{A}_{:,\J} \in \mathbb{C}^{n\times k}$ and $\mat{R} = \mat{A}_{\I,:} \in \mathbb{C}^{k\times m}$ are submatrices formed by selecting column and row index sets $\J \subset \{1, \ldots, m\}$ and $\I \subset \{1, \ldots, n\}$ with $|\I| = |\J| = k$ (though some work allows $|\I| \neq |\J|$), and $\mat{U} \in \mathbb{C}^{k\times k}$ is a small core matrix. Given fixed index sets $\I$ and $\J$, 
several choices of $\mat{U}$ are possible. The two most common choices are $\mat{U} = \mat{R}^{\dagger}\mat{A}\mat{C}^{\dagger,*}$, which we refer to as \textit{projective} CUR, and $\mat{U} = \mat{A}(\I,\J)^{\dagger}$, which we refer to as \textit{interpolative} CUR.
The projective CUR is the optimal choice of $\mat{U}$ that minimizes the Frobenius norm error~\cite[section 4]{stewart1999four}~\cite[Proposition 10.1]{epperly2025make}, while the interpolative CUR agrees with the chosen rows and columns exactly, and interpolates the other entries.

Closely related to CUR decompositions are interpolative decompositions and other ``CUR-like'' factorizations that store
\(\mat{A} \approx \mat{Z}\mat{U}\mat{W}\), where the columns of \(\mat{Z}\) and the rows of \(\mat{W}\) span (approximately) the column space of \(\mat{C}\) and the row space of \(\mat{R}\), respectively. Examples include skeletonization~\cite{cheng2005compression}, stable CUR variants~\cite{epperly2025make,ekentaspectrum}, and incomplete LU and Cholesky factorizations.
The na\"{\i}ve version of RPLU shown in \Cref{alg:rplu}, as well as the standard implementation of CPLU, construct such CUR-like decompositions. However, as we show in \Cref{sec:practical_algorithm}, maintaining the approximation in true CUR form is sometimes preferable, especially when entries of \(\mat{A}\) can be accessed on demand without storing \(\mat{A}\) explicitly (e.g., for kernel matrices).

This raises the question of how to choose the index sets $\I$ and $\J$. A theoretically appealing but expensive approach is maximum-volume pivoting. Here, row and column subsets $\I$ and $\J$ are selected to globally maximize the volume $|\det(\mat{A}(\I, \J))|$. This strategy provides a strong theoretical guarantee on the approximation error for CUR decompositions~\cite{damle2024reveal,goreinov2001maximal}:
\[
\|\mat{A} - \mat{A}(\,:\,,\J) \mat{A}(\I,\J)^{\dagger} \mat{A}(\I,\,:\,)\|_{\max} \leq (k+1)\,\sigma_{k+1},
\]
\[
\|\mat{A} - \mat{A}(\,:\,,\J) \mat{A}(\I,\J)^{\dagger} \mat{A}(\I,\,:\,)\|_{2} \leq (1+5k\sqrt{mn})\,\sigma_{k+1},
\]
where $\|\cdot\|_{\max}$ denotes the entrywise maximum norm.
The latter bound shows that interpolative CUR decompositions can achieve errors comparable to the SVD, up to a modest factor depending on $k,m,n$. Unfortunately, maximum-volume pivoting is too expensive: it would require evaluating the determinant of all $\binom{n}{k}\binom{m}{k}$ candidate submatrices, which is exponential in $k$. Practical methods therefore trade such strong bounds for computational efficiency. Recent work in~\cite{damle2024reveal} introduces \emph{local} maximum-volume pivoting and shows it can achieve the second bound in polynomial time. Many practical CUR algorithms exist; the bulk fall into three fundamental classes:

\vspace{.3cm} 

\noindent \textbf{All-at-once strategies:} These methods select $k$ columns (and/or rows) in one step or sweep~\cite{drineas2006fast,deshpande2010efficient,wang2013improving}. More sophisticated variants sample using weighted distributions based on column norms or leverage scores~\cite{mahoney2009cur}. These methods are attractive when access to $\mat{A}$ is limited (e.g., one-pass or two-pass settings), but they are not the focus of this work.

\vspace{.3cm}

\noindent \textbf{One-at-a-time strategies:} These methods select columns (and/or rows) iteratively, typically using a residual-based heuristic. Sometimes they include limited ``backtracking'' or ``swapping'', which can make them more robust. They can achieve high accuracy for a given rank $k$, but typically require repeated access to $\mat{A}$. Many popular one-at-a-time strategies make use of QR and LU decompositions with various pivoting strategies~\cite{damle2024reveal,epperly2025make,sorensen2016deim,voronin2017efficient}.

\vspace{.3cm}

\noindent \textbf{Sketch-and-pivot strategies:} These methods begin by forming a sketch of $\mat{A}$ and then performing column (or row) subset selection on the sketch. For example, if $\mat{B} \approx \mat{\Omega}\mat{A}$ where $\mat{\Omega} \in \mathbb{C}^{k \times n}$ is drawn from a random distribution and $k=\Theta(r)$, then one can select columns of $\mat{B}$ and use the corresponding indices to form $\mat{C}$ from $\mat{A}$, see e.g.,~\cite{dong2023simpler,pritchard2025fast}. 

\vspace{.3cm}

Many other approaches combine and refine the strategies above. Our focus in this paper is settings where one-at-a-time methods, such as partial LU factorizations, are applicable and fast. We now describe the major one-at-a-time strategies most closely related to our work.  

\subsubsection{CUR by pivoted QR}

A classical route to CUR and interpolative decompositions is through a column-pivoted QR (CPQR) factorization.   Given \(\mat{A}\in\mathbb{C}^{n\times m}\) and a target rank \(k\), CPQR produces a permutation matrix \(\mat{\Pi}\) such that
\[
\mat{A}\mat{\Pi}
\;=\;
\mat{Q}
\begin{bmatrix}
\mat{R}_{11} & \mat{R}_{12}\\
0 & \mat{R}_{22}
\end{bmatrix},
\qquad
\mat{R}_{11}\in\mathbb{C}^{k\times k},
\]
where \(\mat{Q}\) is unitary and \(\mat{R}\) is upper triangular. Let \(\J\) denote the first \(k\) pivot columns (encoded by \(\mat{\Pi}\)). Dropping the trailing block yields a column interpolative decomposition
\[
\mat{A}
\;\approx\;
\mat{A}_{:,\J}\,\mat{T},
\qquad
\mat{T}
\;=\;
\begin{bmatrix}\mat{I}_k & \mat{R}_{11}^{-1}\mat{R}_{12}\end{bmatrix}\mat{\Pi}^{\top},
\]
with \(\|\mat{A}-\mat{A}_{:,\J}\mat{T}\|_2 = \|\mat{R}_{22}\|_2\).
Applying the same construction to \(\mat{A}^*\) selects a set of pivot rows \(\I\), and combining the resulting row and column subsets gives a CUR approximation \(\mat{A}\approx \mat{C}\mat{U}\mat{R}\) with \(\mat{C}=\mat{A}_{:,\J}\) and \(\mat{R}=\mat{A}_{\I,:}\); the core \(\mat{U}\) can then be chosen using either the projective or interpolative prescription described above. While CPQR employs a simple greedy pivoting heuristic first described by Businger and Golub in~\cite{businger1965linear}, many other variations of pivoted QR have been suggested and studied in the literature~\cite{chandrasekaran1994rank, damle2024reveal, gu1996efficient}. 

In principle CPQR can be numerically unstable for adversarially designed examples, but this is rarely an issue in practice.  Pivoted-QR based CUR methods are typically numerically robust, since the orthogonal transformations help control element growth. Moreover, memory-efficient implementations exist for sparse matrices that do not form \(\mat{Q}\) explicitly (``Q-less'' CPQR), maintaining only the permutation, a small triangular factor, and norm estimates~\cite{stewart1999four,berry2005algorithm}. See also~\cite{voronin2017efficient,epperly2025make}.

\subsubsection{LU with complete pivoting (CPLU)}

As discussed in \Cref{sec:intro}, stopping CPLU after $k$ steps yields the rank-$k$ approximation $\mat{\hat{A}}^{(k)}$ in~\cref{eq:approximation_update}. CPLU can also be modified to produce an \emph{interpolative} CUR decomposition in which $\mat{C}$ and $\mat{R}$ consist of columns and rows of $\mat{A}$, with index sets $\I$ and $\J$ given by the pivot sequence. This can be more memory efficient when entries of $\mat{A}$ can be accessed on demand.
Nonetheless, locating each pivot requires $\mathcal{O}(mn)$ work per step in general.
Despite its cost, the simplicity and effectiveness of complete pivoting has made it a popular technique for finding separable approximations of functions~\cite{townsend2013extension,zhao2005adaptive}, and it has received significant attention in the literature~\cite{cortinovis2020maximum,harbrecht2012low,townsend2015continuous}. In some cases, an approximate pivot can greatly accelerate the process~\cite{townsend2015continuous} with empirical evidence suggesting little loss in quality.
The sharpest known bound on the approximation error for LU with complete pivoting is~\cite[Theorem 6]{cortinovis2020maximum}:
\begin{equation} \label{eq:complete_pivoting_bound}
\|\mat{A} - \mat{\hat{A}}^{(k)}\|_{\max} \leq \rho_k 4^k \, \sigma_{k+1},
\end{equation}
where $\rho_k := \sup_{A} \{\|\mat{A}^{(k)}\|_{\max}\big/\|\mat{A}\|_{\max}\}$ is the \emph{growth factor}. The growth factor is subgeometric in $k$~\cite{wilkinson1961error,bisain2025new}. This bound shows that LU with complete pivoting will provably return a good low-rank approximation only if the singular values decay geometrically, i.e., $\sigma_k = \mathcal{O}(\rho^k)$ with rate $\rho < \frac{1}{4}$. Importantly, the exponential factor $4^k$ in the error bound is asymptotically tight~\cite[Rem.~3.3]{harbrecht2012low}\cite{higham1987survey}. In practice, however, this bound is often pessimistic, and LU with complete pivoting typically performs much better than~\cref{eq:complete_pivoting_bound} might suggest, though it can fail in the presence of outlier columns (see~\Cref{fig:rplu_intro} for an example).

\subsubsection{Complete 2-norm pivoting (C2PLU)}

A less-studied pivoting strategy relevant to our work is complete $2$-norm pivoting~\cite{melgaard2015gaussian}, which selects pivots from the residual $\mat{A}^{(k)}$ via
\begin{equation}
i_{k+1} = \argmax_i \| \mat{A}^{(k)}_{i,:} \|_2, \quad
j_{k+1} = \argmax_j | \mat{A}^{(k)}_{i,j} |.
\end{equation}
That is, at step $k\!+\!1$ the row pivot is the row of $\mat{A}^{(k)}$ with the largest $2$-norm, and the column pivot is the largest-magnitude entry in that row.
This strategy (and a sketched variant) was studied as an alternative to complete pivoting to improve the numerical stability of Gaussian elimination~\cite{melgaard2015gaussian}. An approximate version based on matrix sketching was proposed in~\cite{ekentaspectrum,chen2020efficient}, where pivots are selected from the sketch and a projective CUR decomposition is formed from the resulting row and column indices.

A closely related recent method is IterativeCUR~\cite{pritchard2025fast}, which combines sketching with partial pivoting to construct a CUR approximation iteratively.
IterativeCUR is also closely related to our CUR formulations of RPLU and C2PLU in~\Cref{sec:practical_algorithm}: for batch size $b=1$, it can be viewed as a sketched, approximate variant of C2PLU, where row-norm information is estimated using a small recycled sketch. In settings with outliers (e.g., \Cref{fig:rplu_intro}), this sketch-induced randomness does not appear to mitigate the outlier sensitivity that motivates RPLU. Nevertheless, the strong empirical performance of IterativeCUR on general matrices suggests that incorporating related sketching ideas could also improve performance in the low-memory and Cauchy-like settings considered in~\Cref{sec:practical_algorithm,sec:cauchy}.

To the best of our knowledge, no theoretical or empirical results are known on the quality of low-rank approximations produced by (non-approximate) complete $2$-norm pivoting.
However, as we show in~\Cref{sec:results}, C2PLU is a natural greedy, deterministic counterpart to RPLU and can be implemented in the same way and with the same cost as RPLU (unlike CPLU); in our experiments it often outperforms RPLU.

\subsubsection{Pivoted Cholesky}
When \(\mat{A}\) is PSD, the symmetry can be exploited by choosing $\I = \J$, reducing the CUR approximation to a Nyström approximation of the form $\mat{A} \approx \mat{A}(:,\I) \mat{A}(\I,\I)^{\dagger} \mat{A}(\I,:)$. The Nyström approximation is often computed via a partial Cholesky decomposition, the symmetric analogue of the rank-1 update in~\cref{eq:approximation_update}. Traditionally, pivots are chosen greedily by selecting the largest diagonal entry:
\[
j_k = i_k \;=\; \argmax_i \, \mat{A}^{(k)}_{ii},
\]
where \(\mat{A}^{(k)}\) denotes the residual after \(k\) updates in~\cref{eq:residual_update}. Despite the PSD structure, the worst-case approximation quality of this greedy choice essentially mirrors the nonsymmetric setting. The best known bound for this choice~\cite[Corollary 9]{cortinovis2020maximum} is:
\begin{equation} \label{eq:nystrom_bound}
\|\mat{A}-\mat{\hat{A}}^{(k)}\|_{\max} \;\le\; 4^{\,k}\,\sigma_{k+1},
\end{equation}
and the factor \(4^{k}\) is asymptotically tight (the asymptotic example in~\cite{higham1987survey} is PSD). Unlike the nonsymmetric case, pivots can be chosen very efficiently here since one only needs to examine the diagonal entries. This efficiency makes the Nyström method an important approach in many applications, including kernel regression.
Similarly to the nonsymmetric case, this greedy scheme typically performs much better than~\cref{eq:nystrom_bound} suggests but can fail in the presence of outliers (see~\Cref{fig:symmetric_pivoting} for an example).

\subsubsection{Randomly pivoted Cholesky (RPCholesky)}
The inspiration for the present work is the analysis of Randomly Pivoted Cholesky (RPCholesky)~\cite{rpchol} in the PSD setting, which selects the pivot from the diagonal at step \(k\) with probability
\[
\prob(i\mid \mat{A}^{(k-1)}) \;=\; \frac{\mat{A}^{(k-1)}_{ii}}{\tr\!\bigl(\mat{A}^{(k-1)}\bigr)}.
\]
The authors establish two key results. The first is a doubling lemma, in a similar spirit to bounds of pivoted Cholesky and CPLU:
\begin{lemma}[Error doubling, Lemma 5.5 in~\cite{rpchol}]\label{lem:rchol_doubling}
For every PSD matrix \(\mat{A}\), the residual of RPCholesky satisfies
\[
\mathbb{E}\,\tr\!\bigl(\mat{A} - \mat{\hat{A}}^{(k)}\bigr)
\;\le\;
2^{k}\,\tr\!\bigl(\mat{A}-\lowrank{\mat{A}}_{k}\bigr),
\qquad k\in\mathbb{N}.
\]
\end{lemma}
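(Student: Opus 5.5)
We prove the doubling lemma by induction on $k$ through a one-step expected-trace inequality. The key tool is the expected residual map of one RPCholesky step. If $\mat{A}\succeq 0$ is the current residual, then
\[
\Phi(\mat{A}) := \EX\bigl[\mat{A}^{(1)}\bigr] = \mat{A} - \sum_i \frac{\mat{A}_{ii}}{\tr \mat{A}}\cdot\frac{\mat{A}_{:,i}\mat{A}_{i,:}}{\mat{A}_{ii}} = \mat{A} - \frac{\mat{A}^2}{\tr\mat{A}}.
\]
This identity holds exactly: the pivot probabilities cancel the Schur-complement denominators. We will use three structural facts about $\Phi$.

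\textbf{Step 1: properties of $\Phi$.} We check three things.
\begin{itemize}
\item[(a)] $\Phi$ is monotone on PSD matrices, in the sense that $\mat{A}\preceq\mat{B}$ implies $\Phi(\mat{A})\preceq\Phi(\mat{B})$.
\item[(b)] $\Phi$ is concave on the PSD cone.
\item[(c)] The residual after a pivot at $\mat{A}$ is itself PSD and $\preceq\mat{A}$.
\end{itemize}
Fact (c) is the standard Schur complement property. Concavity is the claim that $\mat{A}\mapsto\mat{A}^2/\tr\mat{A}$ is operator convex, i.e., that the perspective of $x\mapsto x^2$ is jointly operator convex. Monotonicity follows by writing $\Phi(\mat{A})=\mat{A}(\Id-\mat{A}/\tr\mat{A})$ and differentiating along $\mat{A}+t\mat{H}$ with $\mat{H}\succeq0$. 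One can check that the derivative has the form $\mat{H}-(\mat{A}\mat{H}+\mat{H}\mat{A})/\tr\mat{A}+\tr(\mat{H})\mat{A}^2/(\tr\mat{A})^2$, and that this is PSD by a short spectral argument. I expect this monotonicity verification to be the main technical obstacle. I would first try reducing to rank-one $\mat{H}=\vec v\vec v^*$ and working in the eigenbasis of $\mat{A}$.

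\textbf{Step 2: iterate with Jensen.} By the tower property, $\EX\mat{A}^{(k)}=\EX\,\Phi(\mat{A}^{(k-1)})$. By concavity (b) and operator Jensen, this is $\preceq\Phi(\EX\mat{A}^{(k-1)})$. By monotonicity (a) and induction, we get $\EX\mat{A}^{(k)}\preceq\Phi^{k}(\mat{A})$. Since $\Phi$ is a spectral function, $\Phi^k(\mat{A})$ has the same eigenvectors as $\mat{A}$. Its trace is therefore governed by a scalar recursion on the eigenvalues $\lambda_i^{(t+1)}=\lambda_i^{(t)}(1-\lambda_i^{(t)}/s_t)$, where $s_t=\sum_i\lambda_i^{(t)}$.

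\textbf{Step 3: the scalar doubling bound.} Let $\tau_k=\sum_{i>k}\lambda_i=\tr(\mat{A}-\lowrank{\mat{A}}_k)$. We must show that $s_k\le 2^k\tau_k$. We argue by induction on the size of the top block. The per-step decrease is $s_t-s_{t+1}=\sum_i(\lambda_i^{(t)})^2/s_t$. Since $\lambda_i^{(t)}$ is nonincreasing in $t$, a head/tail split shows that each step halves the excess of $s_t$ over a suitably tracked tail. Concretely, we show that $s_{t+1}\le \tfrac12 s_t+\tfrac12\cdot(\text{contribution that survives})$. Unrolling this over $k$ steps gives at most $2^k\tau_k$.

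An alternative route, in case the scalar induction is messy, is to use the known bound $\EX\tr\mat{A}^{(k)}\le \tr(\mat{A}-\lowrank{\mat{A}}_r)\,(1+\ldots)$ with $r=k$. The main risk remains monotonicity in Step 1.
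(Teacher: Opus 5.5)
Steps 1 and 2 are sound. They reproduce the chain \eqref{eq:jensen\_chain}, and monotonicity and concavity of $\Phi$ are Lemma 5.3 of \cite{rpchol}, so that is not where the risk lies. The genuine gap is Step 3: the inequality $\tr\Phi^{\circ k}(\mat{A})\le 2^k\tr(\mat{A}-\lowrank{\mat{A}}_k)$ is false, so this route cannot prove the lemma.

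Take $\mat{A}=\Id_2$ and $k=2$. Then $\Phi^{\circ 2}(\Id_2)=\tfrac14\Id_2$ has trace $\tfrac12$, while the right-hand side is $0$. In general, for rank $r\ge 2$ every eigenvalue is multiplied by $1-p_i^{(t)}\in(0,1)$ at each step, so $\Phi^{\circ k}(\mat{A})$ never loses rank, whereas $\tr(\mat{A}-\lowrank{\mat{A}}_k)=0$ for $k\ge r$. This is exactly the phenomenon in \cref{prop:trace\_ratio} and the discussion of \Cref{sec:one\_step\_bound}.

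The inequality fails even before the rank is exhausted. For $\mat{A}=\diag(1,\epsilon,\epsilon^2)$ one computes $\tr\Phi(\mat{A})=2\epsilon$ and $\tr\Phi^{\circ 2}(\mat{A})=\epsilon+O(\epsilon^2)$. Compare this with $4\tr(\mat{A}-\lowrank{\mat{A}}_2)=4\epsilon^2$, while the true RPCholesky error after two steps is about $2\epsilon^2$.

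The loss occurs in the Jensen step. With probability $1-O(\epsilon^2)$ the first residual is $\diag(0,\epsilon,\epsilon^2)$ or $\diag(1,0,\epsilon^2)$, and the next pivot reduces either one to trace $O(\epsilon^2)$ in expectation. Their average $\Phi(\mat{A})\approx\diag(\epsilon,\epsilon,\epsilon^2)$, however, has two comparable eigenvalues, and $\Phi$ removes only about half of it. Averaging before the next step destroys the low-rank structure of each realization, which is what the doubling lemma exploits.

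Your fallback through \cref{theorem:rpchol\_bound} with $r=k$ is vacuous. The condition $k\ge k/\varepsilon+k\log(1/(\varepsilon\eta))$ forces $\varepsilon>1/(e\eta)$, so the resulting bound exceeds $\tr(\mat{A})/e$.

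The argument the paper follows never replaces the random residual by its mean wholesale; its proof of \cref{theorem:doubling} in \Cref{sec:doubling\_proof} mirrors \cite{rpchol}. It proves a one-step tail-trace inequality,
\[
\EX\tr(\mat{A}^{(1)}-\lowrank{\mat{A}^{(1)}}_{k-1})\le 2\tr(\mat{A}-\lowrank{\mat{A}}_k),
\]
and chains it by conditioning on the current residual:
\[
\EX\tr\mat{A}^{(k)}\le 2\,\EX\tr(\mat{A}^{(k-1)}-\lowrank{\mat{A}^{(k-1)}}_1)\le\cdots\le 2^k\tr(\mat{A}-\lowrank{\mat{A}}_k).
\]

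The one-step bound applies Ky Fan to each realization. The test vectors are $\vec v_{k+1},\dots,\vec v_n$ (the bottom eigenvectors of $\mat{A}$) plus one pivot-dependent vector $\vec g=\mat{P}\mathbf e_i/\|\mat{P}\mathbf e_i\|_2$. Here $\mat{P}$ projects onto the top-$k$ eigenspace and $i$ is the pivot.

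Only the fixed vectors are averaged, which gives $\sum_{j>k}\vec v_j^*\Phi(\mat{A})\vec v_j\le\tr(\mat{A}-\lowrank{\mat{A}}_k)$. For $\vec g$, set $\alpha_i=\mathbf e_i^*\mat{P}\mat{A}\mathbf e_i$, $\beta_i=\mathbf e_i^*(\Id-\mat{P})\mat{A}\mathbf e_i$ and $\lambda_1=\|\mat{A}\|_2$. The identity $\mat{A}^{(1)}\mathbf e_i=0$ then yields
\[
\vec g^*\mat{A}^{(1)}\vec g=\frac{\alpha_i\beta_i}{a_{ii}\|\mat{P}\mathbf e_i\|_2^2}\le\frac{\lambda_1\beta_i}{a_{ii}}.
\]
Averaging over $\prob(i)=a_{ii}/\tr\mat{A}$ gives at most $\lambda_1\tr(\mat{A}-\lowrank{\mat{A}}_k)/\tr\mat{A}\le\tr(\mat{A}-\lowrank{\mat{A}}_k)$. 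If $\mat{P}\mathbf e_i=0$, take $\vec g=\vec v_1$, which gives the same bound. Letting the test vector adapt to the realized pivot is precisely the information your Jensen step throws away.
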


This lemma shows that the randomized pivoting improves upon greedy pivoting in the worst case (the factor of $4^k$ is reduced to $2^k$), and that the residual norm will decay geometrically when $\sigma_k = \mathcal{O}(\rho^{k})$ for $\rho < \frac{1}{2}$.

The other bound in~\cite{rpchol} concerns itself with an oversampled version, which compares the expected trace of the residual after $k$ steps with the trace of the best rank $r$ approximation, with $k \geq r$:
\begin{theorem}[simplified bound, Theorem 2.3 in~\cite{rpchol}]\label{theorem:rpchol_bound}
   Fix \(r\in\mathbb{N}\) and \(\varepsilon>0\), and let \(\mat{A}\) be PSD.
    The Nyström approximation \(\hat{\mat{A}}^{(k)}\) produced by RPCholesky attains the error bound
    
    \begin{equation}
    \mathbb{E}\,\tr\!\bigl(\mat{A} - \mat{\hat{A}}^{(k)}\bigr)
\;\le\;
(1+\varepsilon) \tr\!\bigl(\mat{A}-\lowrank{\mat{A}}_{r}\bigr),
    \end{equation} provided
    \[
    k \;\ge\; \frac{r}{\varepsilon} \;+\; r\,\log\!\Bigl(\frac{1}{\varepsilon\,\eta}\Bigr),
    \qquad
    \eta \coloneqq \frac{\tr(\mat{A}-\lowrank{\mat{A}}_{r})}{\tr(\mat{A})}.
    \]
\end{theorem}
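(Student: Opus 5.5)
The plan follows the RPCholesky analysis in three stages: an exact one-step expectation formula, a Jensen-type reduction to a deterministic matrix recursion, and a scalar analysis of the trace.

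\textbf{Step 1 (one-step expected residual).} Write one step of RPCholesky as $\mat{A}\mapsto \mat{A} - \mat{A}_{:,i}\mat{A}_{i,:}/\mat{A}_{ii}$ with $i$ drawn with probability $\mat{A}_{ii}/\tr(\mat{A})$. Averaging over $i$, the pivot weights cancel and
\[
\mathbb{E}\bigl[\mat{A}^{(1)}\mid \mat{A}\bigr] \;=\; \mat{A} - \frac{1}{\tr(\mat{A})}\sum_i \mat{A}_{:,i}\mat{A}_{i,:} \;=\; \mat{A} - \frac{\mat{A}^2}{\tr(\mat{A})} \;=:\; \Phi(\mat{A}).
\]

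\textbf{Step 2 (reduction to a deterministic recursion).} I will show that $\Phi$ has two properties on the PSD cone.
\begin{itemize}
\item It is monotone: $\mat{0}\preceq\mat{A}\preceq\mat{B}$ implies $\Phi(\mat{A})\preceq\Phi(\mat{B})$.
\item It is concave with respect to $\preceq$.
\end{itemize}
For both, I would write $\Phi(\mat{A}) = \mat{A}-\mat{A}^2/\tr\mat{A}$ as a Schur-complement-type expression, or equivalently use a variational formula such as $\Phi(\mat{A}) = \min_{\vec{x}}(\cdots)$ of affine functions in $\mat{A}$; that form gives concavity immediately. Monotonicity follows from the same representation.

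With these properties, iterate: $\mathbb{E}\mat{A}^{(k)} = \mathbb{E}\,\Phi(\mat{A}^{(k-1)}) \preceq \Phi(\mathbb{E}\mat{A}^{(k-1)}) \preceq \Phi^{k}(\mat{A})$, using the operator Jensen inequality and then monotonicity. Hence $\mathbb{E}\tr(\mat{A}-\hat{\mat{A}}^{(k)}) \le \tr\Phi^k(\mat{A})$.

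Since $\Phi$ commutes with unitary conjugation, $\Phi^k(\mat{A})$ is diagonal in the eigenbasis of $\mat{A}$. Each eigenvalue evolves by $\lambda\mapsto\lambda-\lambda^2/T$, where $T$ is the current trace.

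\textbf{Step 3 (scalar analysis).} Fix the indices of the top $r$ eigenvalues of $\mat{A}$. Let $a_t$ be their current sum and $b_t$ the sum of the remaining ones. Every eigenvalue only decreases, so $b_t\le b_0=\tr(\mat{A}-\lowrank{\mat{A}}_r)=\eta\tr(\mat{A})$. By Cauchy--Schwarz the top block has sum of squares at least $a_t^2/r$, so
\[
a_{t+1}\le a_t - g(a_t),\qquad g(x):=\frac{x^2}{r(x+b_0)} ,
\]
where the trace is replaced by its upper bound $a_t+b_0$. Because $g$ is increasing, $\int_{a_{t+1}}^{a_t}dx/g(x)\ge (a_t-a_{t+1})/g(a_t)\ge 1$, so each step consumes at least one unit of ``time''. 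Therefore the number of steps needed to drive $a_t$ from $a_0\le\tr\mat{A}$ down to $\varepsilon b_0$ is at most
\[
\int_{\varepsilon b_0}^{a_0}\frac{r(x+b_0)}{x^2}\,dx \;\le\; r\log\frac{a_0}{\varepsilon b_0}+\frac{r}{\varepsilon} \;\le\; r\log\frac{1}{\varepsilon\eta}+\frac{r}{\varepsilon}.
\]
After that many steps, $\tr\Phi^k(\mat{A}) = a_k+b_k\le (1+\varepsilon)\,b_0$, which is the claim. If $a_t$ ever drops below $\varepsilon b_0$ earlier, the bound already holds, since $a_t$ is nonincreasing.

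\textbf{Main obstacle.} The delicate step is Step 2, proving that $\Phi$ is monotone and concave in the Loewner order, since it involves both $\mat{A}^2$ and the nonlinear normalisation $1/\tr\mat{A}$. I would try first to express $\Phi(\mat{A})$ as a Schur complement of the PSD block matrix $\begin{bmatrix}\tr\mat{A} & \cdot\\ \cdot & \mat{A}\end{bmatrix}$, built from $\mat{A}^{1/2}$ and the vectorization of $\mat{A}^{1/2}$, because Schur complements are jointly concave and monotone.
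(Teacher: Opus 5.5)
Your proposal is correct and follows the route the paper sketches for this cited result (and that \cite{rpchol} carries out): the one-step map $\Phi$, concavity plus monotonicity with Jensen to get $\EX[\mat{A}^{(k)}]\preceq\Phi^{\circ k}(\mat{A})$, and then a scalar dynamical-system bound on $\tr\Phi^{\circ k}(\mat{A})$, where your Cauchy--Schwarz reduction and integral comparison give exactly the threshold $r/\varepsilon+r\log(1/(\varepsilon\eta))$. One small fix in Step 2: a scalar corner entry only yields a rank-one correction, and blocks built from $\mat{A}^{1/2}$ are not linear in $\mat{A}$, so take the Schur complement of $\begin{bmatrix}(\tr\mat{A})\Id & \mat{A}\\ \mat{A} & \mat{A}\end{bmatrix}$, i.e.\ $v^*\Phi(\mat{A})v=\min_{w}\bigl[\tr(\mat{A})\|w\|_2^2+2\Re(w^*\mat{A}v)+v^*\mat{A}v\bigr]$, which is linear in $\mat{A}$ and gives concavity; monotonicity then follows because the same block built from any $\mat{H}\succeq 0$ is PSD, as $\mat{H}\preceq\tr(\mat{H})\Id$.
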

This result provides a useful bound for a much larger class of matrices than~\cref{lem:rchol_doubling}, and shows that this relatively simple scheme will return an approximation comparable in quality to the best rank $r$ approximation after $k\geq r$ steps in many settings.

However, because $\eta$ increases with the rank $r$, this bound does not guarantee geometric decay of the residual, regardless of the singular value structure of the matrix $\mat{A}$; we come back to this issue in~\Cref{sec:one_step_bound}. While our results are most similar to RPCholesky, we highlight that other PSD pivoting strategies have been proposed and analyzed; see, e.g.,~\cite{belabbas2009spectral,gittens2011spectral, guruswami2012optimal, fornace2024column, steinerberger2024randomly}.

\section{Analysis of RPLU} \label{sec:results}
In this section, we analyze the low-rank approximation performance of RPLU. 
Our analysis parallels that of RPCholesky~\cite{rpchol} for PSD matrices and relies on the expected one-step Schur complement. We highlight similarities and key differences by recalling certain elements of the analysis of RPCholesky. 

\subsection{Geometric convergence}

Let $\mat{C}^{(1)}$ be the residual after 1 step of RPCholesky applied to $\mat{C}^{(0)} = \mat{C} \succeq 0$, i.e. the result of $1$ step of~\cref{eq:residual_update} with pivot $(i,j)$ sampled with probability $p_{i,j} = \delta_{ij}c_{ii} /\tr \mat{C}$ where $\delta_{ij}$ is the Kronecker delta. Its expected value is given by
\begin{align}\label{eq:rpchol_schur_complement}
\EX[\mat{C}^{(1)} \mid \mat{C}] \;=\; \left( \mat{C} - \frac{\mat{C}^2}{\tr \mat{C}} \right) \;=:\; \Phi(\mat{C}).
\end{align}
The analysis of RPCholesky hinges on this identity; the properties of the map $\Phi$ are heavily used to establish~\cref{theorem:rpchol_bound}.

Let us now consider  $\mat{A}^{(1)}$, the Schur complement after $1$ step of RPLU, with $\mat{A}^{(0)} = \mat{A} \in \mathbb{C}^{n\times m}$, i.e. the result of $1$ step of~\cref{eq:residual_update} with pivots chosen with probability $p_{i,j} = |a_{i,j}|^2/\|\mat{A}\|_F^2$. Its expected value is given by
\begin{align*}
\EX[\mat{A}^{(1)} \mid \mat{A}] \;= \sum_{i,j} p_{i,j} \left( \mat{A} - \frac{1}{a_{i,j}} \mat{l}_j \mat{u}_i^{\!T} \right) = \; \mat{A} - \frac{\mat{A}\mat{A}^*\mat{A}}{\tr(\mat{A}^*\mat{A})},
\end{align*}
where $\mat{l}_j$ denotes column $j$ of $\mat{A}$, and $\mat{u}_i^{\!T}$ is row $i$ of $\mat{A}$.
Despite the nice analogy, this identity is less fruitful in the nonsymmetric setting. For PSD matrices,
the nuclear norm coincides with the trace, and the linearity of the trace allows expectations to commute with the norm. No analogous norm exists for general rectangular or nonsymmetric matrices.

To bypass this difficulty, we study the Gram matrix of the one-step residual, $ {\mat{A}^{(1)}}^* \mat{A}^{(1)}$. A short calculation yields a PSD-order upper bound:
\begin{equation}\label{eq:G1_bound}\begin{aligned}
\EX[{\mat{A}^{(1)}}^* \mat{A}^{(1)} \mid \mat{A}]
&= \sum_{i,j} p_{i,j}
\Big(\mat{A} - \tfrac{1}{a_{i,j}}\,\mat{l}_j \mat{u}_i^{\!T}\Big)^{\!*}
\Big(\mat{A} - \tfrac{1}{a_{i,j}}\,\mat{l}_j \mat{u}_i^{\!T}\Big) \\
&= \mat{A}^*\mat{A} \;-\; \frac{2}{\|\mat{A}\|_F^2}\,\mat{A}^*\mat{A}\,\mat{A}^*\mat{A}
\;+\; \frac{1}{\|\mat{A}\|_F^2}\!\sum_{\substack{i,j\\ a_{i,j}\neq 0}} \overline{\mat{u}_i}\,\|\mat{l}_j\|_F^2\,\mat{u}_i^{\!T} \\
&\preceq 2\,\mat{A}^*\mat{A} \;-\; \frac{2}{\|\mat{A}\|_F^2}\,\mat{A}^*\mat{A}\,\mat{A}^*\mat{A}
\;=\; 2\,\Phi(\mat{A}^*\mat{A}),
\end{aligned}\end{equation}
where we used the fact that $\sum_{\substack{i,j\,:\, a_{i,j} = 0}} \overline{\mat{u}_i} \|\mat{l}_j\|_F^2 \mat{u}_i^T \succeq 0$. Remarkably, the same polynomial map ${\Phi}$ that governs RPCholesky reappears here with two key differences: we obtain an upper bound rather than an equality (though~\Cref{eq:G1_bound} is tight if $a_{ij} \neq 0$ for all $i,j$), and an extra factor of $2$ appears. We use this one-step Gram-matrix bound as a key ingredient to prove the following theorem:
\begin{theorem}\label{theorem:doubling}
Let $\mat{A}\in\mathbb{C}^{n\times m}$.  
After $k$ steps of randomly pivoted LU, the expected squared Frobenius norm of the $k$th Schur complement satisfies
\begin{equation}\label{eq:doubling}
\EX\bigl\|\mat{A} - \mat{\hat{A}}^{(k)}\bigr\|_F^{2}\;\le\;4^{k}\,\bigl\|\mat{A}-\lowrank{\mat{A}}_{k}\bigr\|_F^{2}.
\end{equation}
\end{theorem}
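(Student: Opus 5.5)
The plan is to follow the proof of the doubling lemma for RPCholesky (\Cref{lem:rchol_doubling}), working with the Gram matrix $\mat{G}^{(k)} := {\mat{A}^{(k)}}^*\mat{A}^{(k)} \succeq 0$. The residual $\mat{A}-\mat{\hat{A}}^{(k)}$ is exactly the Schur complement $\mat{A}^{(k)}$, so
\[
\EX\|\mat{A}-\mat{\hat{A}}^{(k)}\|_F^2 = \EX \tr \mat{G}^{(k)}.
\]
The one-step bound \cref{eq:G1_bound} gives $\EX[\mat{G}^{(k)}\mid \mat{A}^{(k-1)}] \preceq 2\Phi(\mat{G}^{(k-1)})$, where $\Phi(\mat{C}) = \mat{C} - \mat{C}^2/\tr\mat{C}$. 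The goal is therefore to show that after $k$ applications of this map, the trace is at most $4^k$ times the tail $\sum_{j>k}\sigma_j^2 = \tr(\mat{G}-\lowrank{\mat{G}}_k)$, where $\mat{G} = \mat{A}^*\mat{A}$.

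The key one-step statement I would prove is the following. For any PSD $\mat{C}$ and any $r\ge 1$,
\[
\tr\bigl(\Phi(\mat{C}) - \lowrank{\Phi(\mat{C})}_{r-1}\bigr) \le 2\,\tr(\mat{C}-\lowrank{\mat{C}}_r).
\]
Combined with the factor $2$ from \cref{eq:G1_bound}, this gives a factor $4$ per step.

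To prove the one-step statement, diagonalize $\mat{C}$ with eigenvalues $\lambda_1\ge\cdots\ge 0$ and $T=\sum\lambda_i$. Then $\Phi(\mat{C})$ has eigenvalues $f(\lambda_i)=\lambda_i(1-\lambda_i/T)$. These need not be ordered, but the sum of the smallest $n-r+1$ of them is at most the sum over any $n-r+1$ indices. Dropping index $1$ and indices $2,\dots,r$ yields the bound
\[
\sum_{i\ge r+1} f(\lambda_i) + f(\lambda_1) \le \tau + \lambda_1(1-\lambda_1/T),
\]
where $\tau = \sum_{i>r}\lambda_i$. Since $T \le r\lambda_1+\tau$, we have $\lambda_1(T-\lambda_1)/T \le \lambda_1 (T-\lambda_1)/T$, and one checks that this is at most $\tau$ plus a term that can be controlled. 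The clean route is the RPCholesky argument: choose the kept index set adaptively, either dropping $\lambda_1$ or not, and use $f(\lambda)\le T/4$ together with concavity. I expect this elementary inequality to be the main obstacle. If the factor $2$ fails for the $(r-1)$-truncation formulation, I would instead carry the whole $k$-step argument at the level of the RPCholesky proof. That proof uses monotonicity of $\Phi$ in the PSD order and concavity of $\mat{C}\mapsto \tr\Phi(\mat{C})$-type functionals.

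The $k$-step argument also needs to pass expectations through the nonlinear map. The inequality \cref{eq:G1_bound} holds only conditionally, so the plan is to define $h_r(\mat{C}) := \tr(\mat{C}-\lowrank{\mat{C}}_r)$, which is concave in $\mat{C}$ and monotone in the PSD order. By the tower property, monotonicity, and Jensen's inequality, we get
\[
\EX\, h_{r-1}(\mat{G}^{(k)}) \le \EX\, h_{r-1}\bigl(2\Phi(\mat{G}^{(k-1)})\bigr) \le 4\,\EX\, h_r(\mat{G}^{(k-1)}).
\]
Concavity of $h$ holds because $h_r(\mat{C})=\min_{\mat{P}}\tr((\mathbf{I}-\mat{P})\mat{C})$ over rank-$r$ orthogonal projectors, a minimum of linear functions. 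Iterating from $r=k$ down to $r=0$ gives
\[
\EX \tr \mat{G}^{(k)} = \EX\, h_0(\mat{G}^{(k)}) \le 4^k h_k(\mat{A}^*\mat{A}) = 4^k\|\mat{A}-\lowrank{\mat{A}}_k\|_F^2,
\]
which is \cref{eq:doubling}.
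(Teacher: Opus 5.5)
\textbf{There is a genuine gap: the key one-step inequality is false.} The rest of your scaffolding is fine: working with $\mat{G}^{(k)}=\mat{A}^{(k)*}\mat{A}^{(k)}$, using the concave, monotone functional $h_r(\mat{C})=\tr(\mat{C}-\lowrank{\mat{C}}_r)$, and iterating from $r=k$ down to $0$ by the tower property. The problem is the claim $h_{r-1}(\Phi(\mat{C}))\le 2h_r(\mat{C})$, and no constant repairs it.

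Suppose $\mat{C}\succeq 0$ has rank exactly $r\ge 2$. Then $h_r(\mat{C})=0$. Yet every nonzero eigenvalue satisfies $\lambda_i<T=\tr\mat{C}$, so $\Phi(\mat{C})$ still has the $r$ positive eigenvalues $\lambda_i(1-\lambda_i/T)$. Hence $h_{r-1}(\Phi(\mat{C}))=\min_{i\le r}\lambda_i(1-\lambda_i/T)>0$. For $\mat{C}=\Id_2$ and $r=2$ you would need $1/2\le 0$. Your sketch already shows the symptom: the term $\lambda_1(1-\lambda_1/T)$ cannot be absorbed into $\tau$ when $\tau=0$.

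The real loss happens earlier, at the Jensen step $\EX\,h_{r-1}(\mat{G}^{(1)})\le h_{r-1}(\EX\,\mat{G}^{(1)})$. Every realization of the residual has lost one rank, but their average has not. For $\mat{A}=\Id_2$, $\mat{G}^{(1)}$ is $\mathbf e_1\mathbf e_1^*$ or $\mathbf e_2\mathbf e_2^*$ with probability $1/2$ each. So $\EX\,h_1(\mat{G}^{(1)})=0$, while $h_1(\EX\,\mat{G}^{(1)})=h_1(\Id_2/2)=1/2$. The right-hand side of \eqref{eq:doubling} vanishes whenever $\mathrm{rank}(\mat{A})\le k$. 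Therefore no argument that pushes the expectation inside the truncation functional, or that tracks $\Phi^{\circ k}$ (cf.\ \Cref{prop:trace_ratio}), can prove the theorem. Your fallback has the same defect: monotonicity and concavity of $\Phi$ are what drive the oversampled bound \Cref{theorem:rpchol_bound}, not the doubling lemma.

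\textbf{What is missing} is a random test vector in the Ky Fan principle, applied to the realized $\mat{G}^{(1)}$ before taking expectations.
\begin{itemize}
\item Take $\vec u_{k+1},\dots,\vec u_m$ to be the eigenvectors of $\mat{G}=\mat{A}^*\mat{A}$ for its $m-k$ smallest eigenvalues. These are deterministic, so the expectation may be moved inside, and $\EX\,\mat{G}^{(1)}\preceq 2\Phi(\mat{G})\preceq 2\mat{G}$ bounds their contribution by $2h_k(\mat{G})$.
\item Choose the remaining vector after seeing the pivot column $c_1$. Let $\mat{P}$ project onto the top $k$ eigenvectors of $\mat{G}$. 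Set $\vec g=\mat{P}\mathbf e_{c_1}/\|\mat{P}\mathbf e_{c_1}\|_2$, or the top eigenvector $\vec v_1$ of $\mat{G}$ if $\mat{P}\mathbf e_{c_1}=0$. This vector follows the direction the pivot eliminates; in the $\Id_2$ example it lies in $\ker\mat{G}^{(1)}$.
\item The contribution of $\vec g$ cannot be bounded via \eqref{eq:G1_bound}. You need the explicit rank-one update and the conditional law $\prob(r_1=i\mid c_1)=|a_{ic_1}|^2/\|\mat{A}\mathbf e_{c_1}\|_2^2$. These give $\EX[\vec g^*\mat{G}^{(1)}\vec g\mid c_1]\le 2\sigma_1^2\|\mat{A}\mat{P}_\perp\mathbf e_{c_1}\|_2^2/\|\mat{A}\mathbf e_{c_1}\|_2^2$. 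Averaging over $c_1$ then yields at most $2\sigma_1^2\,h_k(\mat{G})/\|\mat{A}\|_F^2\le 2h_k(\mat{G})$.
\end{itemize}
Together these give $\EX\,h_{k-1}(\mat{G}^{(1)})\le 4h_k(\mat{G})$, which iterates exactly as in your final step.
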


The proof is deferred to \cref{sec:doubling_proof}. 
In particular, \eqref{eq:doubling} shows that RPLU returns an accurate low-rank approximation when the singular values of $\mat{A}$ decay sufficiently fast. For example, if $\sigma_j \le C\,\rho^{\,j-1}$ for $j\ge 3$, with some $C>0$, then
\[
4^{k}\,\bigl\|\mat{A}-\lowrank{\mat{A}}_{k}\bigr\|_F^{2}
=
4^{k}\sum_{j=k+1}^{n}\sigma_{j}^{2}
\;\le\;
\frac{C^2}{1-\rho^{2}}\,(2\rho)^{2k} \longrightarrow 0 \quad \text{if } \rho < \frac{1}{2}.
\]

This bound compares favorably with the worst-case behavior of CPLU (i.e., greedy deterministic pivoting), which requires a geometric decay rate \(\rho < \tfrac14\) to obtain a guarantee of comparable quality~\cite{cortinovis2020maximum}. We conjecture that the factor \(4^{k}\) in~\cref{eq:doubling} is not sharp and can be improved to \(2^{k}\), although we do not expect the exponential dependence on \(k\) to disappear entirely (in contrast with the PSD setting, where it is conjectured that such exponential growth can be removed~\cite[Conjecture 11.2]{epperly2025make}). To see why some exponential growth may be unavoidable, consider a dense unitary matrix \(\mat{A}\in\mathbb{C}^{n\times n}\) with \(a_{ij} \neq 0\) for all \(i,j\). In this case,
\[
\EX\!\left[\left\|\mat{A}^{(1)}\right\|_F^2\right]
\;=\;
2\,\tr\!\bigl(\Phi(\mat{A}^*\mat{A})\bigr)
\;=\;
2\,(n-1)
\;=\;
2\,\bigl\|\mat{A}-\lowrank{\mat{A}}_{1}\bigr\|_F^{2},
\]
where we used that~\cref{eq:G1_bound} holds with equality when all entries of \(\mat{A}\) are nonzero. In numerical experiments, such growth is typically confined to the first few iterations for matrices that are (nearly) unitary and does not persist for larger \(k\).   
Nonetheless, milder growth can occur in practice (unlike the PSD setting); see, e.g.,~\Cref{fig:graphs_result}.

\subsection{Comparison in the PSD case} \label{sec:psd_comparison}

When the initial matrix $\mat{C} \in \mathbb{C}^{n\times n}$ is PSD, RPCholesky is preferable to RPLU, since sampling a diagonal pivot requires only \(\mathcal{O}(n)\) work. Nevertheless, one might ask whether off-diagonal sampling can be competitive in the PSD setting. To investigate this, we consider a PSD-preserving variant of RPLU based on symmetric rank-2 updates (SRPLU).

At iteration \(k\), SRPLU samples a pivot pair \((i,j)\) with probability proportional to \(|\tilde{c}^{(k)}_{ij}|^2\) and applies the rank-2 update
\begin{equation}\label{eq:SRPLU}
  \tilde{\mat{C}}^{(0)} = \mat{C}, \quad
\tilde{\mat{C}}^{(k+1)}
\;=\;
\tilde{\mat{C}}^{(k)}
\;-\;
\mat{L}_{ij}\,\mat{B}_{ij}^{\dagger}\,\mat{L}_{ij}^{\!*},
\qquad
\mat{B}_{ij}
=
\begin{bmatrix}
\tilde{c}_{ii}^{(k)} & \tilde{c}_{ij}^{(k)}\\[2pt]
\tilde{c}_{ji}^{(k)} & \tilde{c}_{jj}^{(k)}
\end{bmatrix},
\quad
\mat{L}_{ij}
=
\begin{bmatrix}
\tilde{\mat{C}}^{(k)}_{:,i} & \tilde{\mat{C}}^{(k)}_{:,j}
\end{bmatrix},
\end{equation}
which preserves positive semidefiniteness: \(\tilde{\mat{C}}^{(k)} \succeq 0\) for all \(k\).\footnote{A potential advantage of this symmetric update is that it can also be applied to symmetric indefinite matrices to obtain a symmetric low-rank decomposition, although we were unable to prove any convergence guarantees in that case.}

The next proposition gives a one-step comparison: in expectation, a single SRPLU rank-2 update reduces the nuclear norm at least as much as a single rank-1 RPCholesky update. This is a local statement and does not imply that the rank-2 scheme dominates the rank-1 scheme after \(k\) steps. It does, however, suggest that off-diagonal sampling can be competitive with diagonal sampling. A related off-diagonal sampling strategy is analyzed in~\cite{steinerberger2024randomly}.

\begin{proposition}\label{prop:SRPLU_vs_RPCholesky}
Let \(\mat{C} \succeq 0\), let $\tilde{\mat{C}}^{(1)}$ be the residual after one SRPLU step, and let ${\mat{C}}^{(1)}$ be the residual after one RPCholesky step. Then
\begin{equation} \label{eq:off-diag-2-diag-1}
\tr\bigl( \EX[ \tilde{\mat{C}}^{(1)} ] \bigr)
\;\leq\;
\tr\bigl( \EX[ {\mat{C}}^{(1)} ] \bigr).
\end{equation}
\end{proposition}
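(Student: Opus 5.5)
The plan is to compare the two expected trace \emph{reductions} directly, using a lower bound on the SRPLU reduction for each sampled pair and then Cauchy--Schwarz. Write \(\mat{c}_i = \mat{C}_{:,i}\) and factor \(\mat{C} = \mat{G}^*\mat{G}\) with columns \(\mat{g}_i\), so that \(c_{ij} = \mat{g}_i^*\mat{g}_j\).

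\textbf{Step 1: the RPCholesky reduction.} By~\cref{eq:rpchol_schur_complement},
\[
\tr(\mat{C}) - \tr\bigl(\EX[\mat{C}^{(1)}]\bigr) = \frac{\tr(\mat{C}^2)}{\tr \mat{C}} = \frac{\|\mat{C}\|_F^2}{\tr\mat{C}}.
\]
So it suffices to show that the expected SRPLU reduction is at least \(\|\mat{C}\|_F^2/\tr\mat{C}\).

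\textbf{Step 2: the SRPLU reduction for a fixed pair.} For a pair \(S=\{i,j\}\), the update in~\cref{eq:SRPLU} is the Nystr\"om update \(\mat{C}_{:,S}\mat{C}_{S,S}^{\dagger}\mat{C}_{S,:} = \mat{G}^*\mat{P}_S\mat{G}\). Here \(\mat{P}_S\) is the orthogonal projector onto \(\mathrm{span}\{\mat{g}_i,\mat{g}_j\}\); when \(i=j\), \(\mat{B}_{ii}\) has rank one and we get the ordinary rank-one Cholesky update. The trace reduction is therefore \(\|\mat{P}_S\mat{G}\|_F^2\).

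This quantity is monotone in the subspace. Since \(\mathrm{span}\{\mat{g}_i\}\subseteq \mathrm{span}\{\mat{g}_i,\mat{g}_j\}\), the reduction is at least the rank-one reduction at pivot \(i\), namely
\[
\|\mat{P}_{\{i\}}\mat{G}\|_F^2 = \frac{\|\mat{c}_i\|_2^2}{c_{ii}}.
\]
If \(c_{ii}=0\), then positive semidefiniteness forces \(\mat{c}_i=0\); such pairs have zero sampling weight and can be discarded. The identification of~\cref{eq:SRPLU} with a projection, including the pseudoinverse bookkeeping in the singular case, is the one point that needs a careful check.

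\textbf{Step 3: average over the sampling distribution.} The pair \((i,j)\) is drawn with probability \(|c_{ij}|^2/\|\mat{C}\|_F^2\). Taking expectations and using the bound from Step 2 with the first index gives
\[
\tr(\mat{C}) - \tr\bigl(\EX[\tilde{\mat{C}}^{(1)}]\bigr) \;\ge\; \sum_{i,j}\frac{|c_{ij}|^2}{\|\mat{C}\|_F^2}\,\frac{\|\mat{c}_i\|_2^2}{c_{ii}} \;=\; \frac{1}{\|\mat{C}\|_F^2}\sum_i \frac{\|\mat{c}_i\|_2^4}{c_{ii}},
\]
where we used \(\sum_j |c_{ij}|^2 = \|\mat{c}_i\|_2^2\).

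\textbf{Step 4: Cauchy--Schwarz.} Applying Cauchy--Schwarz to the vectors with entries \(\|\mat{c}_i\|_2^2/\sqrt{c_{ii}}\) and \(\sqrt{c_{ii}}\) yields
\[
\|\mat{C}\|_F^4 = \Bigl(\sum_i \|\mat{c}_i\|_2^2\Bigr)^2 \le \Bigl(\sum_i \frac{\|\mat{c}_i\|_2^4}{c_{ii}}\Bigr)\tr\mat{C}.
\]
Hence the expected SRPLU reduction is at least \(\|\mat{C}\|_F^2/\tr\mat{C}\), which is the RPCholesky reduction from Step 1. This proves~\cref{eq:off-diag-2-diag-1}.
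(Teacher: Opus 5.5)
Your proposal is correct and follows essentially the same route as the paper. Both proofs bound the per-pair SRPLU trace reduction below by $\tr(\mat{K}_{ij}) \ge \|\mat{c}_i\|_2^2/c_{ii}$, average with weights $|c_{ij}|^2/\|\mat{C}\|_F^2$ to obtain $\sum_i \|\mat{c}_i\|_2^4/c_{ii}$, and finish with Cauchy--Schwarz against $\tr(\mat{C})$. Your Gram-factorization and projector-monotonicity argument for the per-pair bound is a more complete justification than the paper's, which cites only $\mat{K}_{ij}\succeq 0$.
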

The proof is given in \Cref{sec:SRPLU_vs_RPCholesky_proof}.
In the numerical experiments reported in~\Cref{fig:symmetric_pivoting}, we observe that all three randomized schemes (RPLU, SRPLU, and RPCholesky) behave similarly and substantially outperform greedy LU in the presence of outliers (for example, for the spiral point distribution).

\begin{figure}[h]
  \centering
  \includegraphics[width=1\textwidth]{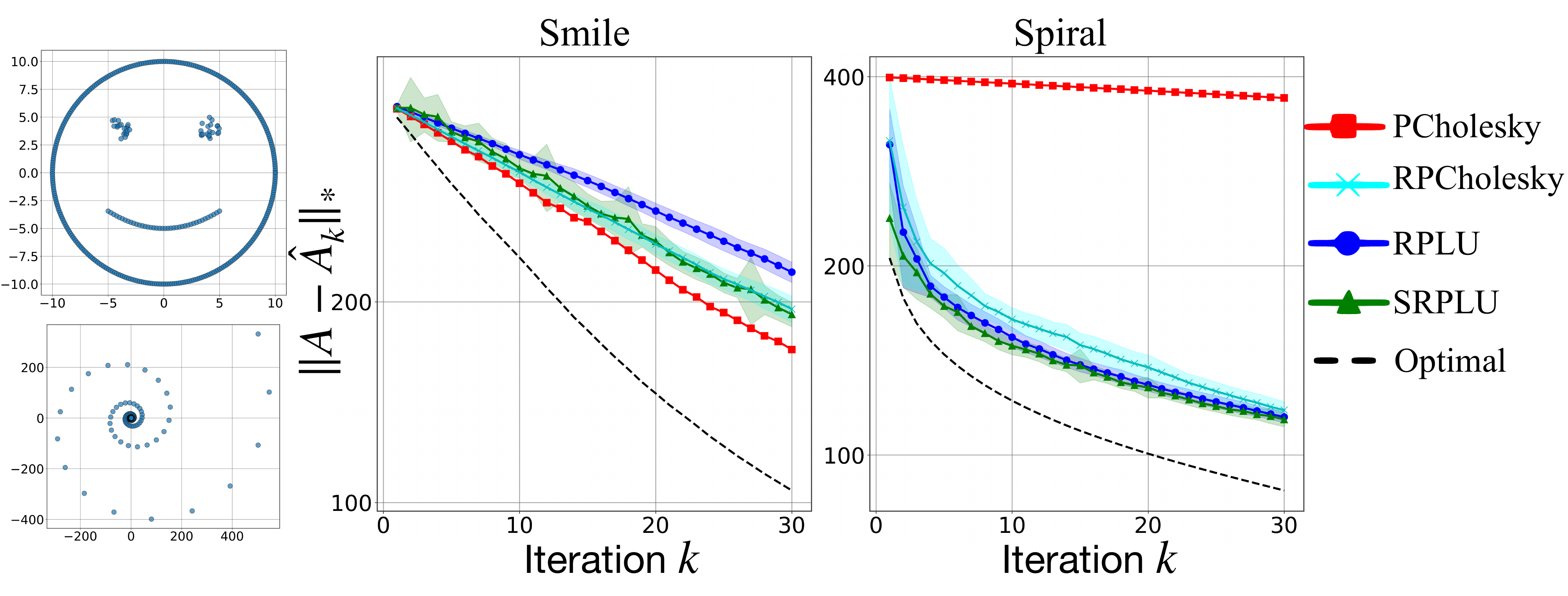}
  \caption{Comparison of low-rank approximation performance on kernel matrices \hbox{$\mat{A}_{ij} = \exp(-0.5\|x_i-x_j\|^2)$} for two point sets shown on left. The nuclear-norm error is averaged over 30 runs, with standard deviation indicated by shaded regions.}
  \label{fig:symmetric_pivoting}
\end{figure}

\subsection{Dynamics of the one-step expected residual} \label{sec:one_step_bound}

The main theorem for RPCholesky (\cref{theorem:rpchol_bound}), which relates the iterate after $k \ge r$ steps to the best rank-$r$ approximation, is driven by the map
\[
\Phi(\mat{C}) \;=\; \EX[\mat{C}^{(1)} \mid \mat{C}],
\]
which gives the expected residual after one step. It is therefore natural to ask whether an analogous argument yields a comparable bound for RPLU; we discuss here why it does not.  

\Cref{theorem:rpchol_bound} for RPCholesky is based on the observation that $\Phi$ is concave with respect to the PSD order~\cite[Lemma 5.3]{rpchol}, so that
\[
\EX[\mat{C}^{(k)} \mid \mat{C}]
\;=\;
\EX[ \Phi(\mat{C}^{(k-1)}) \mid \mat{C}]
\;\preceq\;
\Phi\bigl(\EX[\mat{C}^{(k-1)} \mid \mat{C}]\bigr),
\]
where the equality follows from the law of total expectation and the inequality from the matrix version of Jensen’s inequality. Iterating this yields
\begin{equation}\label{eq:jensen_chain}
  \EX\!\bigl[\mat{C }^{(k)} \mid \mat{C}\bigr]
  \;\preceq\;
  \Phi\bigl(\EX[\mat{C}^{(k-1)} \mid \mat{C}]\bigr)
  \;\preceq\;
  \cdots
  \;\preceq\;
  \Phi^{\circ k}(\mat{C}).
\end{equation}
A separate dynamical system argument is then used to bound $\Phi^{\circ k}(\mat{C})$, which leads to the estimate in~\cref{theorem:rpchol_bound}.

An analogous argument for RPLU can be made by working with the Gram matrix $\mat{A}^{(k)*}\mat{A}^{(k)}$. Using the one-step bound in~\cref{eq:G1_bound}, we obtain
\begin{equation}\label{eq:jensen_chain_rplu}
  \EX\!\bigl[\mat{A}^{(k) *}\mat{A}^{(k)} \mid \mat{A}\bigr]
\;\preceq\;
2\,\EX\!\bigl[\Phi(\mat{A}^{(k-1) *}\mat{A}^{(k-1)}) \mid \mat{A}\bigr]
  \;\preceq\;
  \cdots
  \;\preceq\;
  2^{k}\,\Phi^{\circ k}(\mat{A}^* \mat{A}).
\end{equation}
This suggests geometric decay only if $\Phi^{\circ k}(\mat{A}^* \mat{A})$ decays geometrically faster than $2^{k}$. The next result shows that this is essentially never the case, regardless of the singular value decay of $\mat{A}$. 

\begin{proposition}\label{prop:trace_ratio}
For any $\mat{C} \succeq 0$ of rank $r \geq 2$,
\begin{align}
\lim_{k\to\infty}
\frac{\tr\,\Phi^{\circ(k+1)}(\mat{C})}{\tr\,\Phi^{\circ k}(\mat{C})}
\;=\; 1-\frac{1}{r}.\label{eq:trace_ratio_2}
\end{align}

\end{proposition}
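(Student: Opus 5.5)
The plan is to reduce \cref{prop:trace_ratio} to a scalar dynamical system on the eigenvalues of $\mat{C}$. Since $\Phi(\mat{C}) = \mat{C} - \mat{C}^2/\tr\mat{C}$ is a polynomial in $\mat{C}$ (with a scalar normalization), it preserves eigenvectors. Write the nonzero eigenvalues of $\mat{C}$ as $\lambda_1,\dots,\lambda_r>0$ and set $s=\sum_i\lambda_i=\tr\mat{C}$. Then $\Phi$ acts on eigenvalues as $\lambda_i\mapsto\lambda_i(1-\lambda_i/s)$, and zero eigenvalues stay zero. Set $x_i=\lambda_i/s$, a point of the open simplex in $\mathbb{R}^r$. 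Then
\[
\frac{\tr\Phi(\mat{C})}{\tr\mat{C}} = 1-\sum_i x_i^2,
\qquad
x_i' = \frac{x_i(1-x_i)}{1-\sum_j x_j^2}.
\]
Because $r\ge 2$, each $x_i<1$, so every $x_i'>0$. The rank therefore stays $r$ and $\tr\Phi^{\circ k}(\mat{C})>0$ for all $k$, so the ratios in \eqref{eq:trace_ratio_2} are well defined. The ratio at step $k$ equals $1-\sum_i (x_i^{(k)})^2$. It therefore suffices to show that $x^{(k)}\to(1/r,\dots,1/r)$, because then $\sum_i (x_i^{(k)})^2\to 1/r$.

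The first step is to show that after one iteration all $x_i'\le 1/2$. For any fixed $i$,
\[
1-\sum_j x_j^2 \;\ge\; 1-x_i^2-(1-x_i)^2 \;=\; 2x_i(1-x_i),
\]
using $\sum_{j\ne i}x_j^2\le(\sum_{j\ne i}x_j)^2=(1-x_i)^2$. Hence $x_i'\le 1/2$. From step $1$ onward, all coordinates therefore lie in $(0,1/2]$. On this interval $t\mapsto t(1-t)$ is increasing, so the ordering of the coordinates is preserved by the iteration.

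The second step is a Lyapunov argument with $R_k=M_k/m_k$, the ratio of the largest to the smallest coordinate. By order preservation, the maximum and minimum are attained at the same indices at the next step. This gives $R_{k+1}=R_k\,(1-M_k)/(1-m_k)\le R_k$. So $R_k$ is nonincreasing and bounded below by $1$, and it converges to some limit $R_*\ge1$. Suppose $R_*>1$. Since $M_k\ge 1/r$, we get
\[
M_k-m_k \;=\; M_k\Bigl(1-\frac{1}{R_k}\Bigr) \;\ge\; \frac{1}{r}\Bigl(1-\frac{1}{R_*}\Bigr) \;=:\; c>0.
\]
Then $(1-M_k)/(1-m_k)\le 1-(M_k-m_k)\le 1-c$, so $R_k\le(1-c)^kR_1\to0$, which contradicts $R_k\ge1$. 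Hence $R_k\to1$. Combined with $\sum_i x_i^{(k)}=1$, this forces $x_i^{(k)}\to1/r$ for every $i$, and the limit \eqref{eq:trace_ratio_2} follows.

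I expect the main obstacle to be the possibility that one coordinate exceeds $1/2$. In that regime $t(1-t)$ is not monotone, so the ordering, and with it the monotonicity of the max/min ratio, could fail. The inequality $1-\sum_j x_j^2\ge2x_i(1-x_i)$ removes this problem after a single step. This is the one place where $r\ge2$ is used beyond well-definedness: for $r=1$ we have $\sum_j x_j^2=1$, and the map degenerates.
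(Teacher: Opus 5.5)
Your proof is correct. The reduction to the normalized simplex map $x_i\mapsto x_i(1-x_i)/(1-\sum_j x_j^2)$ matches the paper's, and so does the strategy of squeezing the largest and smallest coordinates together. The differences are in the convergence step.

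First, the obstacle you anticipate does not arise. Because $x_i(1-x_i)-x_j(1-x_j)=(x_i-x_j)(1-x_i-x_j)$ and $x_i+x_j\le 1$ on the simplex, the map preserves order from the very first step, even when a coordinate exceeds $1/2$. The paper uses exactly this identity, so your preliminary bound $x_i'\le 1/2$ is correct but not needed.

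Second, the paper tracks the gap $M_k-m_k$ rather than the ratio $M_k/m_k$. It bounds the gap's contraction factor $(1-M_k-m_k)/(1-q_k)$ by $1-m_k$, using $q_k\le M_k$. It also shows $m_k$ is nondecreasing, using $q_k\ge m_k$. Together these give the explicit estimate $M_k-m_k\le(1-m_0)^k(M_0-m_0)$.

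Your ratio argument ends with a contradiction, so as written it gives no rate. The inequalities you already have yield one directly, though. Combining $R_{k+1}\le R_k\bigl(1-(M_k-m_k)\bigr)$ with $M_k-m_k\ge(1-1/R_k)/r$ gives $R_{k+1}-1\le(1-1/r)(R_k-1)$. This contraction factor does not depend on $\mat{C}$, whereas the paper's factor $1-m_0$ degrades when the smallest nonzero eigenvalue is a small fraction of the trace. The price is the prefactor $R_1-1$, which is large in essentially that same case.
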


The proof appears in~\Cref{sec:dynamics_proofs}.
\Cref{prop:trace_ratio} highlights two issues.
First, \cref{eq:trace_ratio_2} implies that the RPLU upper bound in~\cref{eq:jensen_chain_rplu} grows without bound for any matrix of rank $r \geq 3$.
Second, bounds obtained from the dynamics of $\Phi^{\circ k}$, including~\cref{theorem:rpchol_bound} for RPCholesky, fail to capture fast geometric convergence for large $k$, regardless of the singular value decay rate.
This is in contrast with~\cref{lem:rchol_doubling} for RPCholesky and~\cref{theorem:doubling} for RPLU, which both guarantee arbitrarily fast decay for matrices with sufficiently rapid geometric singular value decay.

This behavior is illustrated in~\Cref{fig:rpchol_bounds}, where the bound in~\cref{theorem:rpchol_bound} is subgeometric even when the singular values, and observed RPCholesky residual decays geometrically.
On the other hand, both~\cref{theorem:doubling} for RPLU and~\cref{lem:rchol_doubling} for RPCholesky guarantee geometric decay whenever the singular values decay faster than $(1/2)^k$, but supply no guarantees when $\rho \geq \frac{1}{2}$. These observations suggest that sharper bounds for both RPCholesky and RPLU may require tracking multi-step dynamics of the residual.

\begin{figure}[htbp]
\centering
\includegraphics[width=\textwidth]{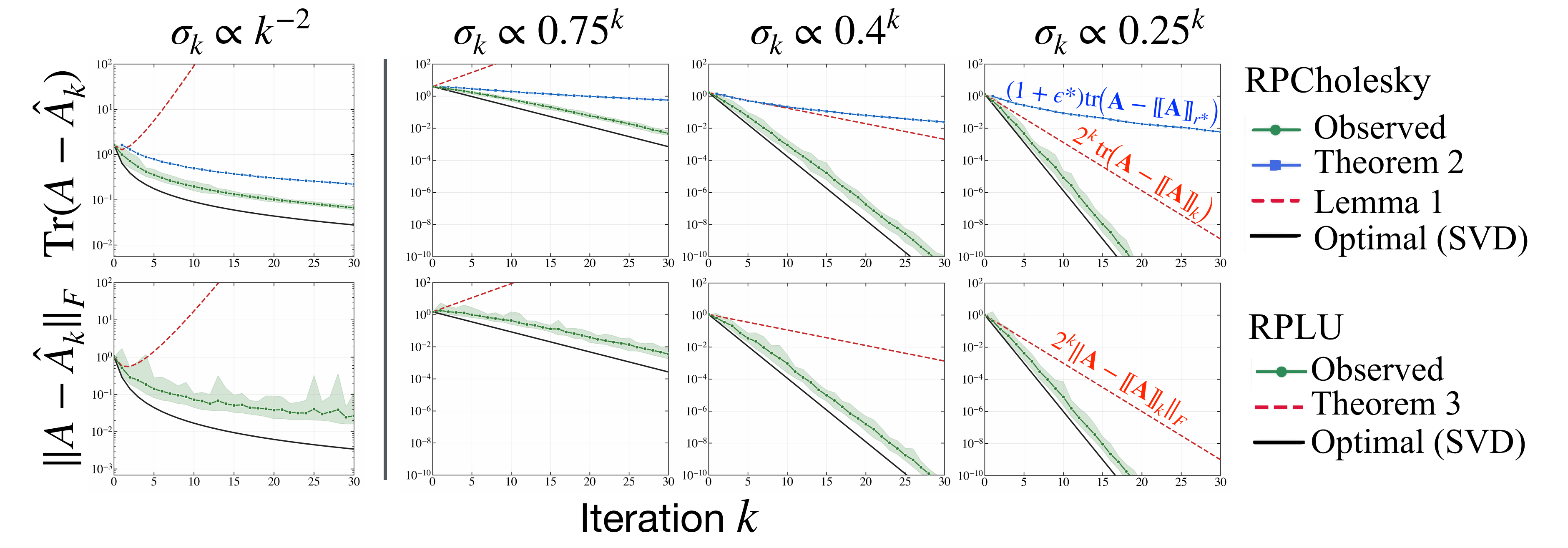}
\caption{Illustration of the bounds in~\cref{theorem:rpchol_bound} and~\cref{lem:rchol_doubling} for RPCholesky and~\cref{theorem:doubling} for RPLU on random $200 \times 200$ matrices with different singular value decay rates.
Top row: RPCholesky with polynomial decay $\sigma_k \propto 1/k^2$ and geometric decay $\sigma_k = \rho^k$ for $\rho \in \{0.75, 0.4, 0.25\}$.
Bottom row: RPLU on the same decay profiles.
To compute the bound in~\cref{theorem:rpchol_bound}, for each $k$ we perform a grid search over $\varepsilon$ and $r$ to find the values that minimize the bound.
When the singular values decay geometrically with rate $\rho < \frac{1}{2}$,~\cref{lem:rchol_doubling} and~\cref{theorem:doubling} provide a loose bound with geometric decay for RPCholesky and RPLU, respectively, and no guarantees when $\rho \geq \frac{1}{2}$.
In contrast, the RPCholesky bound in~\cref{theorem:rpchol_bound} provides a sub-geometric bound in all cases.
In all experiments with geometric decay, the observed residuals (mean over 30 trials with min/max bands) track closely with the optimal rank-$k$ approximation error, significantly outperforming all theoretical bounds.
}
\label{fig:rpchol_bounds}
\end{figure}

\section{ Applications to large structured matrices } \label{sec:practical_algorithm}

We now turn to practical situations where RPLU is a competitive algorithm for low-rank computation.
A na\"ive implementation of~\Cref{alg:rplu} is expensive: each iteration needs to sample a pivot and form the Schur complement,
each of which costs $\mathcal{O}(nm)$ per iteration. 
In this setting, alternative methods such as randomized SVD~\cite{halko2011finding} or leverage‐score/CUR methods~\cite{mahoney2009cur} should be preferred, as they are both faster and provide stronger theoretical guarantees.

However, if the row norms of the initial matrix $\mat{A}$ can be computed efficiently and fast matrix--vector products with $\mat{A}$ and $\mat{A}^T$ are available, then RPLU can be implemented much more efficiently and with low memory overhead, making it attractive for very large problems on memory-constrained hardware such as GPUs.

\subsection{Efficient, low-memory implementation}

First, suppose we know the row norms of the residual $\mat{A}^{(k)}$. Then we can sample a pivot in two stages: sample a row $i$ with probability proportional to its squared norm, $\prob(i\mid\mat{A}^{(k)}) \propto \|\mat{A}^{(k)}_{i,:}\|_2^2$, and then sample a column $j$ with probability proportional to the squared entry in that row, $\prob(j\mid i,\mat{A}^{(k)}) \propto |a^{(k)}_{i,j}|^2$. The resulting joint probability is
\[
\prob(i,j \mid\mat{A}^{(k)}) = \frac{\|\mat{A}^{(k)}_{i,:}\|_2^2}{\|\mat{A}^{(k)}\|_F^2} \cdot \frac{|a^{(k)}_{i,j}|^2}{\|\mat{A}^{(k)}_{i,:}\|_2^2} = \frac{|a^{(k)}_{i,j}|^2}{\|\mat{A}^{(k)}\|_F^2},
\]
which matches the desired distribution.

We now show how to compute these row norms without forming $\mat{A}^{(k)}$ explicitly, assuming we have access to the initial row norms, and a fast matrix--vector products with $\mat{A}$ and $\mat{A}^T$ are available.
The key idea is to maintain the low-rank approximation in CUR form rather than in LU form, similar to
the sparse version of CPQR described in~\cite{stewart1999four} and the low-memory implementation of pivoted Cholesky proposed in~\cite{epperly2024embrace}. That is, we write the rank-$k$ approximation as iteration $k$ as
\[
\hat{\mat{A}}_k := \mat{C}_k\,\mat{U}_k\,\mat{R}_k,
\]
where $\mat{C}_k = \mat{A}_{:,\J_k} \in \mathbb{C}^{n\times k}$, $\mat{R}_k = \mat{A}_{\I_k,:} \in \mathbb{C}^{k\times m}$,  $\I_k \subset [n]$ and $\J_k \subset [m]$ are the sets of selected rows and columns respectively,
and $\mat{U}_k = \mat{W}_k^{-1}$ with $\mat{W}_k = \mat{A}_{\I_k,\J_k} \in \mathbb{C}^{k\times k}$. 
By writing the approximation in this form, operations with the residual $\mat{A}^{(k)} = \mat{A} - \hat{\mat{A}}_k $ at step $k$ can be performed efficiently with matvecs with $\mat{A}$ (or $\mat{A}^T$). For example, column $i$ of $\mat{A}^{(k)}$ can be computed efficiently with two matvecs:
\[
\mat{A}^{(k)}_{:,i} = \mat{A}^{(k)}e_i = (\mat{A} - \mat{\hat{A}}_k)e_i = (\mat{I} - \mat{A}\mat{S}_{\J_k}^T\,\mat{U}_k\,\mat{S}_{\I_k})\,\mat{A} e_i
\]
where $e_i$ is the $i$-th standard basis vector, $\mat{I}$ is the identity matrix, and $\mat{S}_{\mathcal{H}} := \mat{I}_{:,\mathcal{H}}$ is the selection matrix for a subset of indices $\mathcal{H}$.
By storing the approximation in this way, the only quantities necessary to perform RPLU are: $\I_k$, $\J_k$, and $\mat{U}_k$, and the row norms of $\mat{A}^{(k)}$.
We now detail how to update the row norms and the block inverse.

\subsubsection*{Core matrix update}
At each step, a new row index $i_{k+1}$ and column index $j_{k+1}$ is selected and added to $\I_k$ and $\J_k$ respectively. To update the core matrix, we first extract the corresponding vectors from $\mat{A}$ and $\mat{A}^T$:
\begin{align*}  
r^\top &:= \mat{A}_{i_{k+1},:}, \quad
z^\top := \mat{A}_{i_{k+1},\J_k}, \\
c &:= \mat{A}_{:,j_{k+1}}, \quad
w := \mat{A}_{\I_k,j_{k+1}}, \\
\omega &:= \mat{A}_{i_{k+1},j_{k+1}}.
\end{align*}
The updated core matrix is
\[
\mat{W}_{k+1}
=
\mat{A}_{\I_{k+1},\J_{k+1}}
=
\begin{bmatrix}
\mat{W}_k & w \\
z^\top & \omega
\end{bmatrix},
\qquad
\sigma \coloneqq \omega - z^\top \mat{U}_k w.
\]
Using block inversion, $\mat{U}_{k+1} = \mat{W}_{k+1}^{-1}$ may be updated as
\begin{equation} \label{eq:block-inverse-update}
  \mat{U}_{k+1}
  =
  \begin{bmatrix}
  \mat{U}_k + \sigma^{-1}\mat{U}_k w\, z^\top \mat{U}_k & - \sigma^{-1}\mat{U}_k w \\
  -\,\sigma^{-1} z^\top \mat{U}_k & \sigma^{-1}
  \end{bmatrix}.
\end{equation}
Thus, $\mat{U}_{k+1}$ can be updated from $\mat{U}_{k}$ with $\mathcal{O}(k^2)$ work.

If the singular values of $\mat{A}$ decay rapidly, $\mat{W}_{k}$ will typically be ill-conditioned and this update may be numerically unstable. Alternatively, it may be preferable to store $\mat{W}_{k}$ and compute the required solves $\mat{W}_{k}^{-1}v$ using a more stable method, such as a QR decomposition of $\mat{W}_{k}$; moreover, this QR decomposition can be updated from $\mat{W}_{k}$ to $\mat{W}_{k+1}$ in $\mathcal{O}(k^2)$ work~\cite{daniel1976reorthogonalization}.\footnote{Our implementation uses the QR approach by default but we recompute the QR factorization rather than using the low-rank update, since its cost is negligible in our experiments.}

\subsubsection*{Row norm updates}

The only ingredient left to compute are the row norms of $\mat{A}^{(k+1)}$.
Let $r^{(k)} := \mat{A}^{(k)}_{i_{k+1},:}$ and $c^{(k)} := \mat{A}^{(k)}_{:,j_{k+1}}$ be row and column of the residual matrix $\mat{A}^{(k)}$ chosen at step $k+1$.
Using the rank-$1$ update of the residual,
$
\mat{A}^{(k+1)}
= \mat{A}^{(k)} - \,c^{(k)}r^{(k)}  \big/ r^{(k)}_{j_{k+1}} ,
$
the squared row norms $n^{(k+1)}_{\mathrm{row}}=\operatorname{diag}(\mat{A}^{(k+1)}\mat{A}^{(k+1)*})$ may be computed as:
\begin{align*}
n^{(k+1)}_{\mathrm{row}}
&= n^{(k)}_{\mathrm{row}}
- 2\,\Re\!\left(
{(\mat{A}^{(k)}\overline{r^{(k)}}^T)\odot \overline{c^{(k)}} \big/ \overline{r^{(k)}_{j_{k+1}}}}
\right)
+ \frac{\|r^{(k)}\|_2^2}{|r^{(k)}_{j_{k+1}}|^2}\,|c^{(k)}|^{\odot 2}.
\end{align*}
Here $\odot$ denotes the element-wise product and $|x|^{\odot 2} := x \odot \overline{x}$.
The dominant cost of this update is to compute $\mat{A}^{(k)}\overline{r^{(k)}}^{T} = (\mat{A}-\hat{\mat{A}}_k)\overline{r^{(k)}}^T$, which requires two matrix-vector products with $\mat{A}$ plus $\mathcal{O}(k^2+n)$ vector operations.

\subsubsection*{Complexity analysis}
In addition to the initial row-norm computation, the total complexity of $k$ iterations of RPLU in CUR form (\Cref{alg:rplu-cur}) is
$\mathcal{O}(k(n+m)+k^3) + 4k\mathcal{M}(\mat{A}) + 2k\mathcal{M}(\mat{A}^T)$ operations and requires $\mathcal{O}(k^2+n+m)$ memory where 
 $\mathcal{M}(\mat{A})$ denotes the cost of a matrix-vector product (matvec) with $\mat{A}$.\footnote{For simplicity, we count each application of $\mat{A}$ or $\mat{A}^T$ as a full matvec. However, only one of these requires the full matrix, while the remaining operations can be carried out using only the rows/columns selected so far. Depending on the structure of the matrix, these operations may be implemented more efficiently than a full matvec.}

For comparison, a randomized SVD can also exploit fast matvecs and requires
$\mathcal{O}(k^2(n+m)+k^3) + k\mathcal{M}(\mat{A}) + k\mathcal{M}(\mat{A}^T)$ work and $\mathcal{O}(k(n+m)+k^2)$ memory.
Thus, for large $n$ and $m$, RPLU reduces both storage and arithmetic by a factor of $\mathcal{O}(k)$ relative to randomized SVD, at the cost of three times more matvecs.
In many settings with moderate $k$, randomized SVD will still be faster and more accurate in practice, since it is non-iterative, can leverage level-3 BLAS operations and has stronger approximation guarantees. The main practical advantage of RPLU is its $\mathcal{O}(k^2+n+m)$ storage, rather than $\mathcal{O}(k^2 + k(n+m))$, which enables high-rank approximations of very large problems on memory-constrained hardware as we show in~\Cref{sec:numerical}.

\begin{algorithm}[H]
  \caption{RPLU in CUR form}
  \label{alg:rplu-cur}
  \begin{algorithmic}
    \State Input: Matrix $\mat{A} \in \mathbb{C}^{n \times m}$, target rank $r$
    \State Initialize $\I_0 \leftarrow \emptyset$, $\J_0 \leftarrow \emptyset$, $U_0 \leftarrow \mathbb{R}^{0 \times 0}$
    \State $n_0 \leftarrow \|\mat{A}[i,:]\|_2^2$ for all $i \in [n]$ \Comment{Initial row norms of $\mat{A}$}
    \For{$k = 0$ to $r-1$}
    \State \textbf{Step 1: Sample pivot row and column}
    \State Sample a row index $i_{k+1} \propto n_{k}$ 
    \State $r^\top \leftarrow \mat{A}[i_{k+1},:]$ \Comment{Row of $\mat{A}$ -- $1$ $\mathcal{M}(\mat{A}^T)$}
    \State $\hat{r} \leftarrow r - \mat{A}[\I_k,:]^{\top}  \, \mat{U}_{k}^{\top} \, r[\J_k]$ \Comment{Row of $\mat{A}^{(k)}$ -- $1$ $\mathcal{M}(\mat{A}^T)$}
    \State Sample a column index $j_{k+1} \propto |\hat{r}|^{\odot 2}$ 
    \State $c \leftarrow \mat{A}[:,j_{k+1}]$ \Comment{Column of $\mat{A}$ -- $1$ $\mathcal{M}(\mat{A})$}
    \State $\hat{c} \leftarrow c - \mat{A}[:,\J_k] \mat{U}_{k}  c[\I_k] $ \Comment{Column of $\mat{A}^{(k)}$ -- 1 $\mathcal{M}(\mat{A})$}

    \State \textbf{Step 2: Update row norms}
    \State $l \leftarrow \hat{r} / \hat{r}[j_{k+1}]$
    \State $g \leftarrow \mat{A} \, l$  \Comment{1 $\mathcal{M}(\mat{A})$ }
    \State $v \leftarrow \mat{A}[:,\J_k] \mat{U}_{k} g[\I_k] $  \Comment{1 $\mathcal{M}(\mat{A})$ }
    \State $n_{k+1} \leftarrow n_{k} - 2 \, \Re\big((g -v) \odot \overline{\hat{c}}\big) + \|l\|_2^2 \, |\hat{c}|^{\odot 2}$ 

    \State \textbf{Step 3: Update CUR}
    \State Update block inverse $\mat{U}_{k+1}$ from $\mat{U}_{k}$ with \cref{eq:block-inverse-update}
    \State $\I_{k+1} \leftarrow \I_{k} \cup \{i_{k+1}\}$
    \State $\J_{k+1} \leftarrow \J_{k} \cup \{j_{k+1}\}$

    \EndFor
    \State Output: $\I_r$, $\J_r$, $\mat{U}_r$
  \end{algorithmic}
\end{algorithm}

\subsection{Numerical results} \label{sec:numerical}

We benchmark the practical performance of~\Cref{alg:rplu-cur} on very large structured matrices using a GPU, where computation is fast but memory is constrained. 
We compare against the following alternative algorithms:
\begin{itemize}
  \item \textbf{CUR via C2PLU.} Implemented identically to~\Cref{alg:rplu-cur}, but with greedy pivot selection: it replaces the two sampling steps with max-selection. Its cost is the same as RPLU.

  \item \textbf{CUR via sparse $Q$-less pivoted QR}~\cite{stewart1999four}.
  We include this algorithm because its memory complexity matches that of~\Cref{alg:rplu-cur}. 
  The algorithm (detailed in~\Cref{alg:qr_cur}) computes two $Q$-less pivoted QR factorizations, $\mat{Q}_1 \mat{R}_1 = \mat{A}_{:,\J}$ and $\mat{Q}_2 \mat{R}_2 = (\mat{A}_{\I,:})^T$, and then forms the projective core matrix $\mat{U} = \mat{A}_{:,\J}^{\dagger} \mat{A}\, \mat{A}_{\I,:}^{\dagger}$ via 
  \begin{equation} \label{eq:U_cpqr}
    \mat{U} = (\mat{R}_1^T\mat{R}_1)^{-1} \mat{A}_{:,\J}^T \mat{A}\, \mat{A}_{\I,:}^T (\mat{R}_2^T\mat{R}_2)^{-1}.
  \end{equation}
  Its memory complexity is $\mathcal{O}(k^2 + m + n)$ and its time complexity is $\mathcal{O}(k(n+m) + k^2 + 9k\, \mathcal{M}(A) + 6k\, \mathcal{M}(A^T))$. We test both greedy column pivoting (CPQR)~\cite{stewart1999four} and random column pivoting (RPQR)~\cite{rpchol}.
  \item \textbf{Randomized SVD.} Its memory complexity is $\mathcal{O}(k(n+m))$ and time complexity $\mathcal{O}((n+m)k^2 + k\,\mathcal{M}(A) + k\,\mathcal{M}(A^T))$. This method is infeasible for very large matrices when memory is constrained, but we include it as a representative of methods with $\mathcal{O}(k(n+m))$ memory since it often achieves the best speed and accuracy in that class.
\end{itemize}
All algorithms are implemented in JAX~\cite{jax2018github} and run on an NVIDIA A100 GPU with 80\,GB of memory.

We first test these algorithms on a square multi-level Toeplitz matrix arising from the discretization of a non-symmetric drifted Gaussian kernel:
\begin{equation}  \label{eq:kernel}
K(x,y,z) =  \exp\!\left(-\frac{1}{2}\left(\frac{(x-\delta)^2}{\sigma^2} + \frac{y^2}{\sigma^2} + \frac{z^2}{\sigma^2}\right)\right),
\end{equation}
with shift $\delta = 50$ and standard deviation $\sigma = 80$. We discretize the domain $[0, 320]^3$ on a Cartesian grid with $n = N^3$ points for $N \in \{128, 320\}$ (step sizes $h = 320/N$). This matrix exhibits geometrically decaying singular values and admits fast FFT-based matvecs: $\mathcal{M}(A) = \mathcal{M}(A^T) = \mathcal{O}(n \log n)$.
\Cref{fig:cur_vs_svd} summarizes timing and approximation error; the main takeaways are as follows:
\begin{itemize}
\item \textbf{Randomized SVD} is preferable for the coarser discretization ($N=128$) at ranks $k < 512$: it is roughly $2\times$ faster than RPLU and achieves about an order-of-magnitude smaller error before running out of memory (OOM) at $k=512$. For the finer discretization ($N=320$), however, it becomes memory-limited almost immediately (OOM at $k=16$).
\item \textbf{QR-based methods} are accurate at low ranks, matching the randomized SVD, but are roughly $2\times$ slower than the LU-based methods. Their low memory footprint allows high-rank approximations, but accuracy stagnates around $k \approx 250$ due to two sources of numerical instability: (i) ill-conditioning of the core matrix $\mat{U}$ in~\cref{eq:U_cpqr}, where $\kappa_2(\mat{A}_{:,\J}^T \mat{A}\, \mat{A}_{\I,:}^T)$ can grow as large as $\kappa_2(\mat{A}_{\I,\J})^3$; and (ii) the column-norm estimates becoming negative around rank $700$.\footnote{This instability is specific to the low-memory, $Q$-less implementation; QR-based CUR methods can be made stable, see e.g.~\cite{epperly2025make,ekentaspectrum}. Using an interpolative rather than projective CUR could also improve stability at the cost of some accuracy; we do not explore this here.}
\item \textbf{C2PLU} has the same runtime, stability, and memory footprint as RPLU, but achieves smaller error in this example. 
\end{itemize}
In summary, RPLU and C2PLU are the only methods able to compute high-rank, accurate approximations for this problem, owing to their small memory footprint and relatively good numerical stability.

\begin{figure}[htbp]
  \centering
  \includegraphics[width=0.8\textwidth]{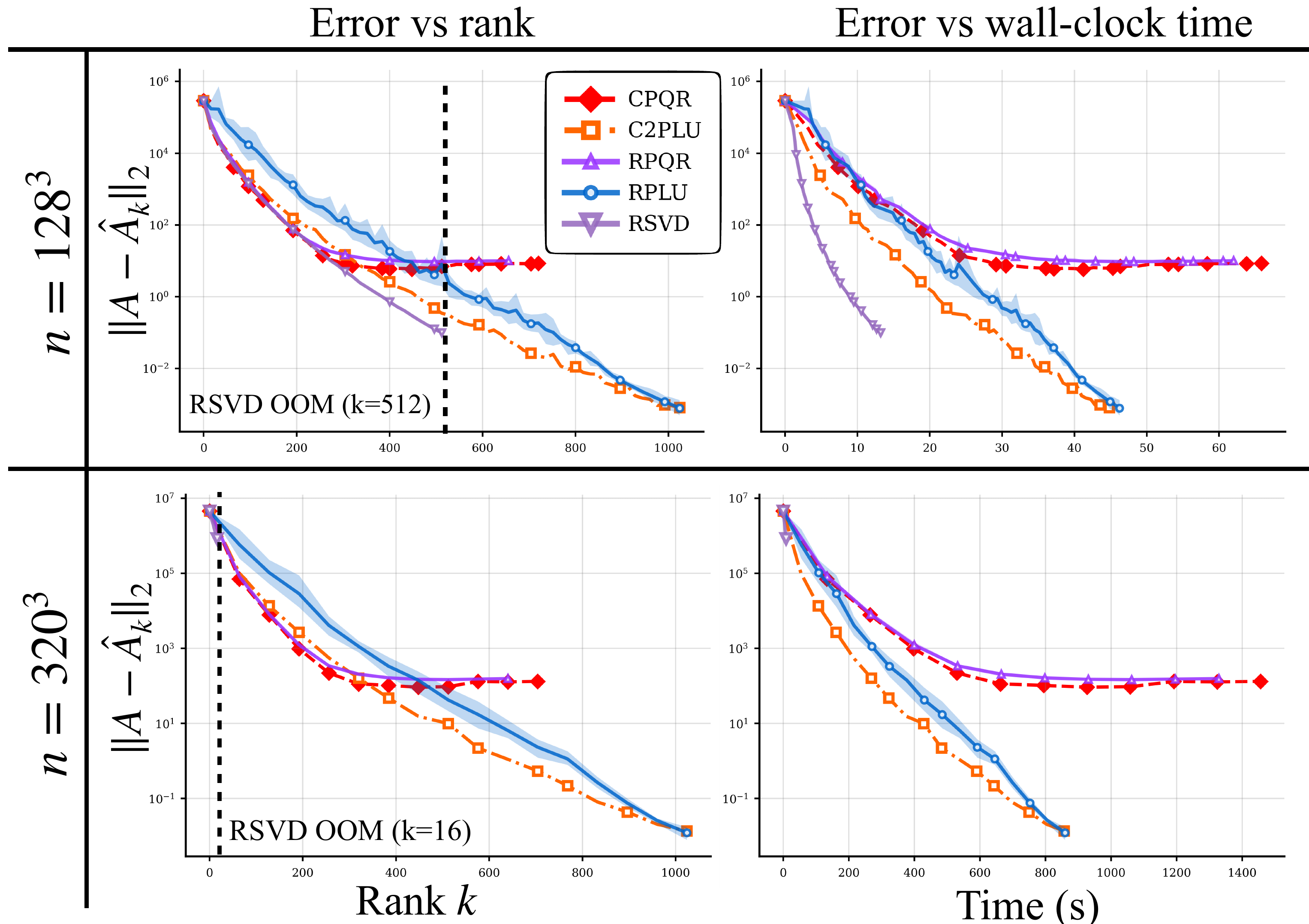}
  \caption{Approximation quality and timing of the various algorithms on a multi-level Toeplitz matrix with $128^3$ and $320^3$ grid points on a GPU. ``OOM'' denotes when the randomized SVD runs out of memory in each case. The $2$-norm is estimated using a Lanczos iteration using matrix-vector products with $\mat{A}$ and $\mat{\hat{A}}$.}
  \label{fig:cur_vs_svd}
\end{figure}

\subsection{Low-memory preconditioning} \label{sec:preconditioning}
To illustrate the usefulness of the low-rank approximation methods with small memory footprint, we consider solving large linear systems on a GPU with an iterative solver.
Specifically, we consider linear systems $\mat{K} x = b$, where $\mat{K} = \mat{B} + \mat{A}$, $ \mathcal{M}(\mat{B}^{-1})$ is cheap and $\mat{A}$ is a term well-approximated by a low-rank factorization $\mat{A} \approx \mat{C}_k \mat{U}_k \mat{R}_k$. 
This type of linear system arises in the numerical solution of differential equations and fitting kernel models.

In that situation, it is natural to approximate $\mat{K}$ as $\mat{P} := \mat{B} + \mat{C}_k \mat{U}_k \mat{R}_k$, and to use $\mat{P}^{-1}$ as a preconditioner.
By the Sherman--Morrison--Woodbury identity~\cite[Section 2.1.4]{golub2013matrix}, the inverse is:
\[
\mat{P}^{-1} = \mat{B}^{-1} - \mat{B}^{-1} \mat{C}_k (\mat{U}_k^{-1} + \mat{R}_k \mat{B}^{-1} \mat{C}_k)^{-1} \mat{R}_k \mat{B}^{-1}.
\]
Given a CUR approximation of $\mat{A}$, the only unknown matrix is $\mat{G}_k := \mat{R}_k \mat{B}^{-1} \mat{C}_k$, which can be formed in $2k\mathcal{M}(\mat{A}) + k\mathcal{M}(\mat{B}^{-1})+ \mathcal{O}(k^3)$ operations.
Then, the cost of a matvec with the preconditioner is $\mathcal{M}(\mat{P}^{-1}) = 2 \mathcal{M}(\mat{A}) + 2 \mathcal{M}(\mat{B}^{-1}) + \mathcal{O}(k^2 + n + m )$ operations and requires only $\mathcal{O}(k^2 + n +m )$ memory.

We demonstrate this on a linear system arising from a large-scale integro-differential boundary value problem on the domain $\Omega = [0, 320]^3$:
\begin{equation}\label{eq:integro_diff}
-\Delta u(\mathbf{x}) + \int_{\Omega} K(\mathbf{x} - \mathbf{y})\, u(\mathbf{y})\, d\mathbf{y} = f(\mathbf{x}), \qquad \mathbf{x} \in \Omega,
\end{equation}
with homogeneous Dirichlet boundary conditions $u|_{\partial\Omega} = 0$, where $K$ is the drifted Gaussian kernel from~\cref{eq:kernel}. Discretizing with finite differences on a grid with $n = 320^3$ points yields $\mat{K} = \mat{B} + \mat{A}$, where $\mat{B}$ is the discrete Laplacian (invertible via the discrete sine transform, so $\mathcal{M}(\mat{B}^{-1}) = \mathcal{O}(n \log n)$) and $\mat{A}$ is the Toeplitz matrix discretizing the integral operator. 

\Cref{fig:preconditioner} compares GMRES(20) convergence for a random right-hand side using different preconditioners: unpreconditioned, and rank-$512$ CUR approximations computed by RPLU, C2PLU, RPQR, and CPQR.
Due to memory constraints (80\,GB), the randomized SVD can only provide a rank-$16$ approximation, which does not improve convergence (not shown).
All CUR-based preconditioners greatly accelerate GMRES, reaching relative error $\sim 10^{-12}$ in about 50 restarts.
The factorization dominates total runtime: approximately 10 minutes for RPLU and C2PLU, and 20 minutes for RPQR and CPQR.

\begin{figure}[htbp]
  \centering
  \includegraphics[width=0.8\textwidth]{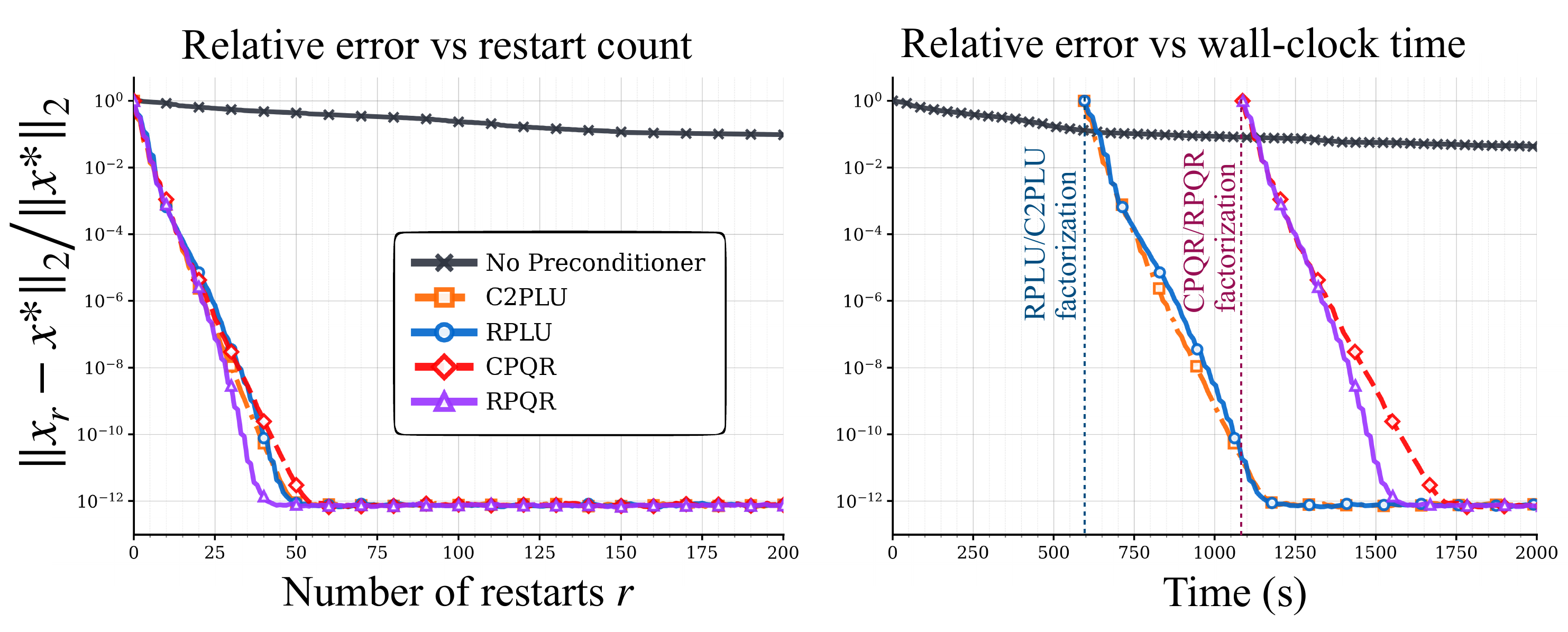}
  \caption{Convergence and timing of three GPU-based GMRES solvers for an integro-differential equation discretized on a $320^3$ grid ($n \approx 3.2 \times 10^7$). We compare unpreconditioned GMRES (black), and GMRES with a rank-$512$ CUR preconditioner computed by RPLU (blue), C2PLU (orange), CPQR (red) and RPQR (purple). }
  \label{fig:preconditioner}
\end{figure}

\subsection{CUR decomposition of sparse matrices}\label{sec:graph_laplacian_lowrank}
CUR decompositions are a natural choice for sparse matrices, since the factors $\mat{C}$ and $\mat{R}$ inherit the sparsity of $\mat{A}$. A common application is to graph adjacency matrices, where the selected rows and columns identify the most ``important" nodes. This idea underlies algorithms for approximate max-cut~\cite{drineas2008sampling}, anomaly detection~\cite{sun2007less}, graph matching~\cite{khan2019image}, and image processing~\cite{henneberger2023hyperspectral}.

We benchmark the four low-memory CUR methods from~\Cref{sec:numerical} on ten large square sparse matrix from the SuiteSparse Matrix Collection~\cite{suite_sparse} detailed in~\Cref{tab:suitesparse-links}, with number of rows ranging from $0.9$ to $36$ millions (\Cref{tab:suitesparse-links}). Results are shown in~\Cref{fig:graphs_result}, sorted by how well each matrix admits a low-rank CUR approximation (i.e., lowest relative Frobenius error achieved).

\begin{table}[h!]
  \centering
  \small
  \begin{tabular}{l l l r r}
  \toprule
  Graph & SuiteSparse ID & Description & $n$ & nnz \\ 
  \midrule
  FullChip & \href{https://sparse.tamu.edu/Freescale/FullChip}{Freescale/FullChip} & Circuit simulation & 2,987,012 & 26,621,990 \\
  circuit5M dc & \href{https://sparse.tamu.edu/Freescale/circuit5M_dc}{Freescale/circuit5M\_dc} & Circuit simulation   & 3,523,317 & 18,583,332 \\
  mawi-A & \href{https://sparse.tamu.edu/MAWI/mawi_201512012345}{MAWI/mawi\_201512012345} & Network traffic & 18,571,154 & 54,174,174 \\
  mawi-B & \href{https://sparse.tamu.edu/MAWI/mawi_201512020000}{MAWI/mawi\_201512020000} & Network traffic & 35,991,342 & 105,048,180 \\
  StocF-1465 & \href{https://sparse.tamu.edu/Janna/StocF-1465}{Janna/StocF-1465} & PDE discretization & 1,465,137 & 21,005,389 \\
  Freescale2 & \href{https://sparse.tamu.edu/Freescale/Freescale2}{Freescale/Freescale2} & Circuit simulation & 2,999,349 & 24,355,389 \\
  emilia & \href{https://sparse.tamu.edu/Janna/Emilia_923}{Janna/Emilia\_923} & Structural mechanics & 923,136 & 40,373,538 \\
  Long Coup \texttt{dt0} & \href{https://sparse.tamu.edu/Janna/Long_Coup_dt0}{Janna/Long\_Coup\_dt0} & PDE discretization & 1,470,152 & 21,101,558 \\
  wiki-Talk & \href{https://sparse.tamu.edu/SNAP/wiki-Talk}{SNAP/wiki-Talk} & Wikipedia talk network & 2,394,385 & 5,021,410 \\
  rajat31 & \href{https://sparse.tamu.edu/Rajat/rajat31}{Rajat/rajat31} & Circuit simulation & 4,690,002 & 20,316,253 \\
  \bottomrule
  \end{tabular}
  \caption{SuiteSparse matrix collection entries used in experiments; sorted by approximability (lowest relative Frobenius error of the CUR decomposition).}
  \label{tab:suitesparse-links}
  \end{table}
The results depend on how well each matrix admits a low-rank approximation:
\begin{enumerate}
  \item For highly approximable matrices (e.g., ``FullChip''), LU-based methods outperform QR-based methods in both speed and accuracy, owing to better numerical stability.
  \item For moderately approximable matrices, QR- and LU-based methods achieve similar accuracy, but LU-based methods are $1.2$--$2.5\times$ faster.
  \item For poorly approximable matrices (e.g., ``wiki-Talk''), the LU-based residual actually increases with the number of steps, whereas the QR-based residual does not.
\end{enumerate}
Notably, the greedy schemes (C2PLU, CPQR) outperform their randomized counterparts in nearly all of these experiments, despite the stronger theoretical guarantees of the randomized methods and the adversarial examples in~\cref{fig:rplu_intro} where greedy pivoting fails.

\begin{figure}[htbp]
  \centering
  \includegraphics[width=\textwidth]{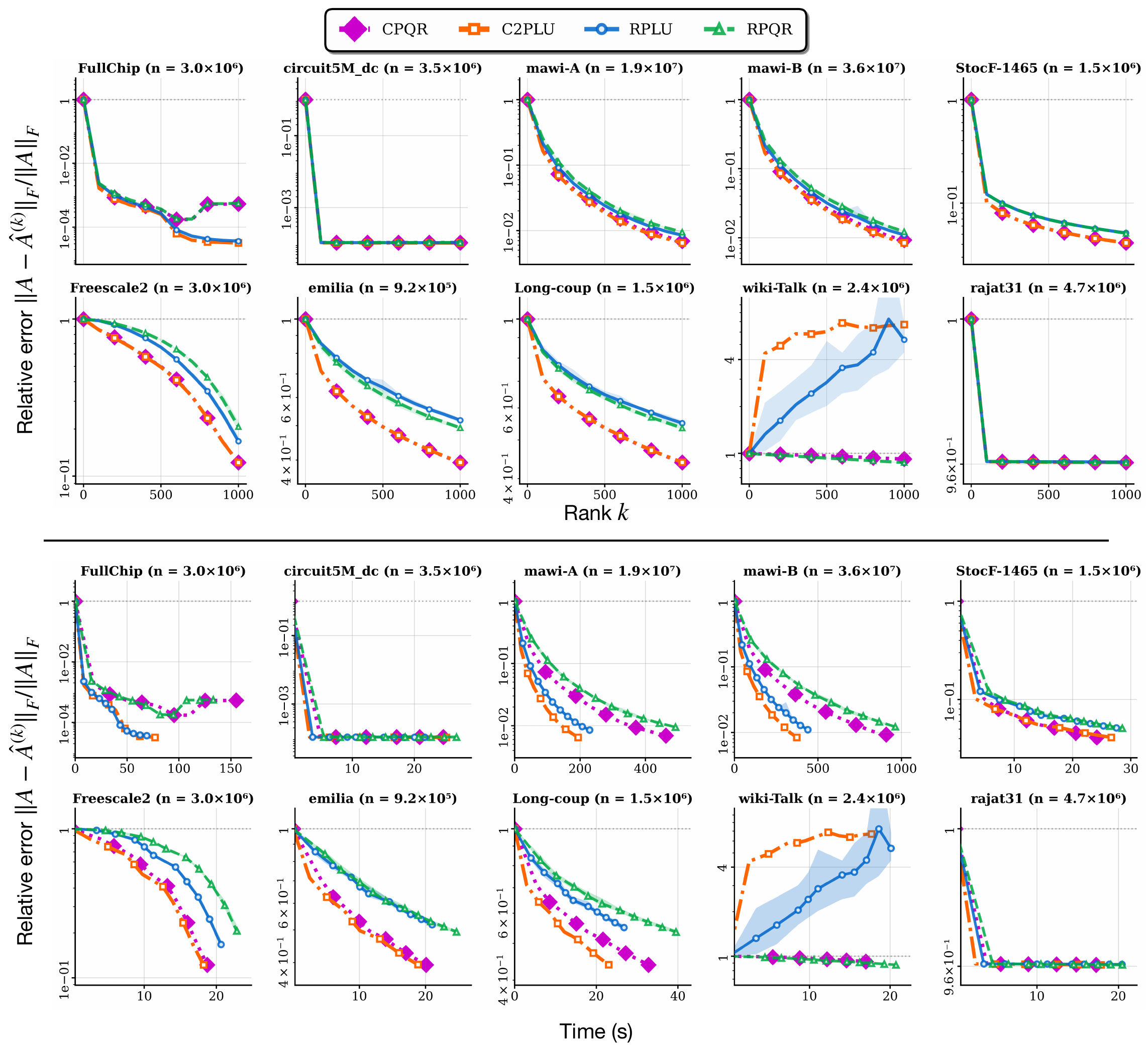}
  \caption{Comparison of the four low-memory CUR methods on ten large sparse matrices from SuiteSparse (\Cref{tab:suitesparse-links}), sorted by approximability. Top row: relative Frobenius error vs.\ rank. Bottom row: relative error vs.\ runtime. LU-based methods are faster and often achieve the same accuracy but can fail for poorly approximable matrices (e.g., ``wiki-Talk'').} 
  \label{fig:graphs_result}
\end{figure}

\section{Applications to Cauchy-like and related matrices} \label{sec:cauchy}
Low-rank approximation methods based on Gaussian elimination are particularly attractive when both a matrix and its Schur complements share an exploitable structure. 
A prime example that gives rise to partial Cholesky approximations is the class of PSD matrices.
Another example, which we explore in this section, are Cauchy-like matrices.
\subsection{RPLU for Cauchy-like matrices}
A matrix $\mat{A} = (a_{i,j}) \in \mathbb{C}^{n \times m}$ is called Cauchy-like if its entries can be written as
\begin{equation}~\label{eq:cauchy-like}
a_{i,j} = \sum_{l = 1}^p  \frac{g_{i,l} b_{l,j}}{x_i - y_j},
\end{equation}
where $x_i, y_j, g_{i,l}, b_{l,j} \in \mathbb{C}$ and $x_i \neq y_j$ for all $i,j$.
Cauchy-like matrices arise in many applications, such as the discretization of integral operators and the solution of partial differential equations, in the solving of matrix equations~\cite{druskin2011analysis}, and in the computation of rational approximations of functions~\cite{antoulas2010interpolatory, aaa}.
Cauchy-like matrices always satisfy a Sylvester matrix equation of the following form: 
\begin{equation} \label{eq:cauchy-like-Sylvester}
\mat{X}\mat{A} - \mat{A}\mat{Y} = \mat{G}\mat{B}
\end{equation}
where $\mat{X} = \diag(x_1, \cdots, x_n) \in \mathbb{C}^{n \times n}$ and $\mat{Y} = \diag(y_1, \cdots, y_m) \in \mathbb{C}^{m \times m}$, and $\mat{G} = (g_{i,l}) \in \mathbb{C}^{n \times p}$ and $\mat{B} = (b_{l,j}) \in \mathbb{C}^{p \times m}$, with $p$ being a very small number (e.g., $< 5$) called the displacement rank of $\mat{A}$.  The matrices $\mat{G}$ and $\mat{B}$ are called the generators of the Cauchy-like matrix $\mat{A}$.  
Other structured matrix families, such as Vandermonde, Toeplitz, and Hankel matrices, satisfy analogous Sylvester equations with low-rank right-hand sides; collectively, these are known as matrices of low displacement rank. An excellent survey on displacement structure is~\cite{kailath1995displacement}.

A key property of Cauchy-like matrices is that their Schur complements are also Cauchy-like~\cite{gohberg1995fast,heinig2012inversion}; this closure does not hold for general low-displacement-rank matrices.
Concretely, let $(i,j)$ be a pivot with $a_{ij}\neq 0$ and consider the one-step residual update in~\cref{eq:residual_update}:
\[
\mat{A}^{(1)} := \mat{A} - \frac{1}{a_{ij}}\,\mat{A}_{:,j}\,\mat{A}_{i,:}.
\]
If $\mat{A}$ satisfies~\cref{eq:cauchy-like-Sylvester}, then $\mat{X}\mat{A}^{(1)}-\mat{A}^{(1)}\mat{Y} = \mat{G}^{(1)}\mat{B}^{(1)}$, with $\mat{G}^{(1)}$ and $\mat{B}^{(1)}$ given by:
\begin{align}
\mat{G}^{(1)} &:= \mat{G} - \frac{1}{a_{ij}}\,\mat{A}_{:,j}\,\mat{G}_{i,:},
\label{eq:generator_update_g} \\
\mat{B}^{(1)} &:= \mat{B} - \frac{1}{a_{ij}}\,\mat{B}_{:,j}\,\mat{A}_{i,:},
\label{eq:generator_update_b}
\end{align}
This is easily verified by direct computation. Note that $\mat{G}\mat{B}_{:,j}=(X-y_j I)\mat{A}_{:,j}$, $\mat{G}_{i,:}\mat{B}=\mat{A}_{i,:}(x_i I-Y)$, and $\mat{G}_{i,:}\mat{B}_{:,j}=(x_i-y_j)a_{ij}$, and thus:
\begin{align*}
\mat{G}^{(1)}\mat{B}^{(1)}
&= \mat{G}\mat{B} - \frac{1}{a_{ij}}\bigl(\mat{G}\mat{B}_{:,j}\mat{A}_{i,:} + \mat{A}_{:,j}\mat{G}_{i,:}\mat{B}\bigr) + \frac{1}{a_{ij}^2}\,\mat{A}_{:,j}\bigl(\mat{G}_{i,:}\mat{B}_{:,j}\bigr)\mat{A}_{i,:} \\
&= \mat{G}\mat{B} - \frac{1}{a_{ij}}\bigl(\mat{X}\mat{A}_{:,j}\mat{A}_{i,:} - \mat{A}_{:,j}\mat{A}_{i,:}\mat{Y}\bigr)
 = \mat{X}\mat{A}^{(1)}-\mat{A}^{(1)}\mat{Y}.
\end{align*}
In particular, the displacement rank (dimension of the generators) does not increase, and the generators of $\mat{A}^{(1)}$ can be computed from the generators of $\mat{A}$ using~\cref{eq:generator_update_g} and~\cref{eq:generator_update_b} in $\mathcal{O}((n+m)r)$ operations.
Since each residual $\mat{A}^{(k)}$ produced by RPLU is Cauchy-like, one can carry out the iteration and compute the residual implicitly by simply updating the generators $\mat{G}^{(k)}$ and $\mat{B}^{(k)}$. 

The remaining operation required to carry out RPLU is to sample a pivot pair $(i,j)$.
As highlighted in~\Cref{sec:practical_algorithm}, this can be done efficiently if the row norms of $\mat{A}^{(k)}$ can be computed efficiently.
Using~\cref{eq:cauchy-like}, these row norms can be written as
\[
\|\mat{A}^{(k)}_{i,:}\|_2^2 = \sum_{j = 1}^m \frac{1}{|x_i - y_j|^2} \left|\sum_{l = 1}^p g_{i,l} b_{l,j}\right|^2.
\]
This vector can be computed to accuracy $\epsilon$ in $\mathcal{O}(p^2 (m+n)\log(1/\epsilon))$ operations using a fast multipole method~\cite{greengard1987accelerating}, or to lower accuracy using a Barnes-Hut approximation~\cite{barnes1986hierarchical}.
However, high-accuracy computation is not necessary to sample pivots from the desired distribution: even a crude upper bound can be used to sample the distribution exactly and efficiently using rejection sampling.
Specifically, we compute upper bounds $u_i^{(k)}$ satisfying
\begin{align*}
  \|\mat{A}^{(k)}_{i,:}\|_2^2 \leq u_i^{(k)} \leq \nu \|\mat{A}^{(k)}_{i,:}\|_2^2
\end{align*}
for a fixed $\nu > 1$ (in experiments, we pick $\nu = 5$), using a tree-based, Barnes-Hut-like computation detailed in~\Cref{sec:cauchy-row-norm-bound}.

The resulting algorithm is summarized in~\Cref{alg:lu-cur-cauchy}. 
The algorithm returns only the selected index sets $\I_K$ and $\J_K$. If a CUR factorization is needed, the core matrix $\mat{U}\in\mathbb{C}^{K\times K}$ can be formed afterward in $\mathcal{O}(K^3)$ operations. Alternatively, one may store the rank-$1$ update data $(\mat{c}/a,\mat{r})$ from each iteration to recover an explicit LU factorization.
We also implement an approximate version of C2PLU using the same upper bound: at each step, select the row maximizing the bound, then select the column with the largest entry in that row. This heuristic has no theoretical guarantees, but performs well in practice.

\begin{algorithm}[H]
  \caption{RPLU for Cauchy-like matrices}
  \label{alg:lu-cur-cauchy}
  \begin{algorithmic}
    \State \textbf{Input:} Generators $\mat{G}^{(0)} \in \mathbb{C}^{n \times p}$, $\mat{B}^{(0)} \in \mathbb{C}^{p \times m}$, vectors $\mathbf{x} \in \mathbb{C}^n$, $\mathbf{y} \in \mathbb{C}^m$, target rank $K$, factor $\nu \geq 1$
    \State Precompute interaction lists as in~\Cref{alg:gbub-precompute} \Comment{$\mathcal{O}(n\log(n) + m\log(m))$ setup}
    \State Initialize $\I_0 \leftarrow \emptyset$, $\J_0 \leftarrow \emptyset$
    \For{$k = 0$ to $K-1$}
      \State \textbf{Step 1: Rejection-sample pivot row}
      \State Compute upper bounds $u_i^{(k)}$ for all $i \in [n]$ using~\Cref{alg:gbub-online}
      \Repeat
        \State Sample $i \sim u_i^{(k)} / \sum_j u_j^{(k)}$
        \State $\mat{r} \gets \bigl(\mat{G}^{(k)}_{i,:} \mat{B}^{(k)}\bigr) \oslash (x_i - \mathbf{y})$ \Comment{Row of $\mat{A}^{(k)}$}
        \State $\rho \gets \|\mat{r}\|_2^2$
        \State Accept with probability $\rho / u_i^{(k)}$
      \Until{accepted}
      \State $i_{k+1} \gets i$

      \State \textbf{Step 2: Sample pivot column}
      \State Sample $j_{k+1} \sim |\mat{r}_j|^2 / \|\mat{r}\|_2^2$
      \State $a \gets \mat{r}_{j_{k+1}}$
      \State $\mat{c} \gets \bigl(\mat{G}^{(k)} \mat{B}^{(k)}_{:,j_{k+1}}\bigr) \oslash (\mathbf{x} - y_{j_{k+1}})$ \Comment{Column of $\mat{A}^{(k)}$}
      \State \textbf{Step 3: Update generators}

      \State $\mat{G}^{(k+1)} \gets \mat{G}^{(k)} - \frac{1}{a} \mat{c} (\mat{G}^{(k)}_{i_{k+1},:})^\top$
      \State $\mat{B}^{(k+1)} \gets \mat{B}^{(k)} - \frac{1}{a} \mat{B}^{(k)}_{:,j_{k+1}} \mat{r}^\top$
      \State $\I_{k+1} \gets \I_k \cup \{i_{k+1}\}$, $\J_{k+1} \gets \J_k \cup \{j_{k+1}\}$
    \EndFor
    \State \textbf{Return} $\I_K$, $\J_K$

  \end{algorithmic}

\end{algorithm}

\subsection{Numerical results} \label{sec:cauchy_numerical}
We benchmark RPLU and C2PLU on Loewner matrices, which are Cauchy-like matrices of the form:
\begin{equation}\label{eq:aaa-loewner}
  \mat{L}_{i,j}=\frac{f^{x}_i-f^{y}_j}{x_i-y_j}.
\end{equation}
These matrices have generators:
\begin{equation} \label{eq:loewner-generators}
  \mat{G}=\begin{bmatrix} f^{x}_i/\alpha & \alpha \end{bmatrix}_i\in\mathbb{C}^{n\times 2},
  \qquad
  \mat{B}=\begin{bmatrix} \alpha \\ -f^{y}_j/\alpha \end{bmatrix}_j\in\mathbb{C}^{2\times m},
\end{equation}
where $\alpha$ is a scaling factor chosen for numerical stability as
\[
\alpha \;=\; \sqrt{\max\!\left\{\max_{1\le i\le n}|f^{x}_i|,\ \max_{1\le j\le m}|f^{y}_j|\right\}}.
\]
These matrices arise in the computation of rational approximations as discussed in the next section.
We compare against randomized SVD with FMM-accelerated matrix-vector products~\cite{greengard1987accelerating}\footnote{Our implementation uses FMMlib2D implementation, see~\url{https://github.com/zgimbutas/fmmlib2d} and Python bindings \url{https://github.com/dbstein/pyfmmlib2d}.}, achieving complexity $\mathcal{O}((n+m)k^2 + (n+m)k \log(1/\epsilon))$ where $k$ is the target rank and $\epsilon$ is the FMM accuracy (set to $5\times10^{-15}$). All experiments are performed on CPU on an Apple M3 Max (16 cores) with 64\,GB RAM.

\begin{figure}[htbp]
  \centering
  \adjustbox{width=1.0\textwidth}{\includegraphics{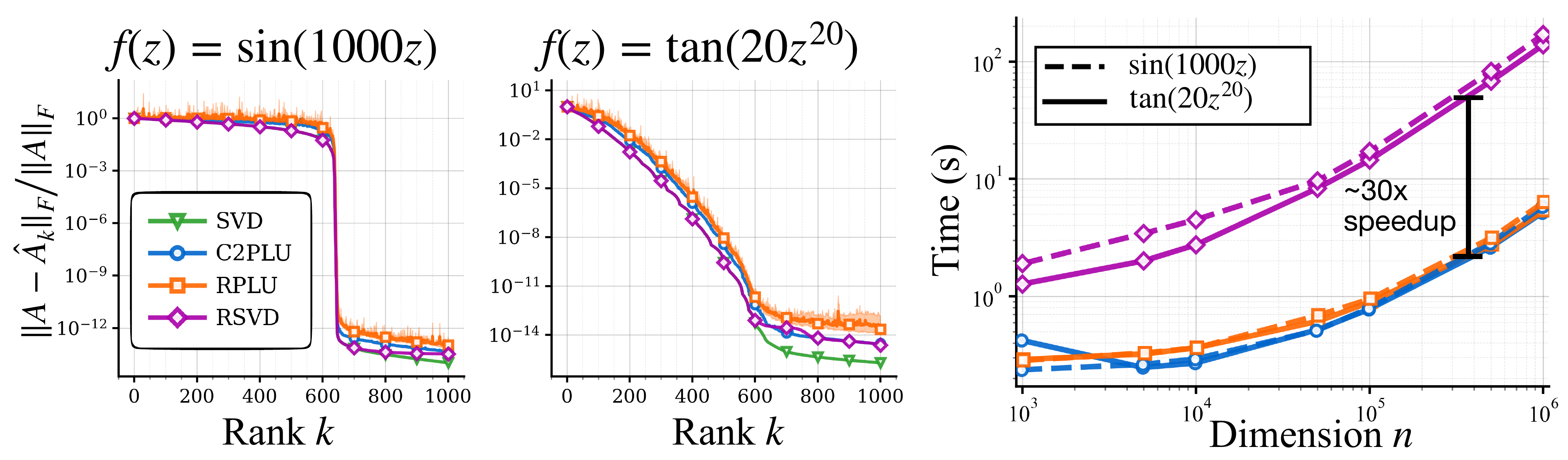}}
  \caption{Approximation quality and timing of RPLU, C2PLU, and randomized SVD on two classes of Loewner matrices.
  Left: approximation quality with $n=m=2000$ points for $f(z)=\sin(1000z)$ with $x_i$ and $y_i$ sampled uniformly on $[-1,1]$, and for $f(z)=\tan(20z^{20})$ with points sampled uniformly on the unit disk $\{z\in\mathbb{C}: |z|\le 1\}$. RPLU is repeated 10 times; we report the mean, with a shaded band showing the min/max over the 10 runs and the randomized SVD is computed once at rank $1000$. Right: scaling of computation time with $n=m$ to compute a rank-$k=1000$ approximation.}
  \label{fig:Loewner_scaling}
\end{figure}

Results in~\Cref{fig:Loewner_scaling} show that RPLU and C2PLU are approximately $30\times$ faster than randomized SVD for large $n$ (here, $n \geq 1000$).\footnote{We emphasize that unlike the results in \Cref{sec:practical_algorithm}, the algorithms here are competitive for even moderate $n$ (e.g., $n \approx 1000$) and regardless of memory footprint.}
This speedup reflects a fundamental difference: randomized SVD requires high-accuracy FMM evaluations for each matrix-vector product, while RPLU needs only crude upper bounds on row norms for rejection sampling, which can be computed much more efficiently using the tree-based algorithm in~\Cref{sec:cauchy-row-norm-bound}.
All three methods produce approximations within a factor of $10$ of the optimum at intermediate ranks ($k \leq 600$); the randomized SVD matches the SVD nearly exactly. At higher ranks, the randomized SVD appears slightly more stable than RPLU: for example, at $k=1000$ the relative residual is $2\times 10^{-15}$ for randomized SVD versus $2\times 10^{-14}$ for RPLU.

An additional advantage of RPLU and C2PLU is that they return CUR decompositions, which we exploit in the next section.

\subsection{Fast rational approximation} \label{sec:fast_rational}

A natural application where Cauchy-like matrices appear is in the problem of rational approximation from function samples.
Let $f:\Omega\subset\mathbb{C}\to\mathbb{C}$ and suppose we are given values $f_i = f(z_i)$ at sample points $\mathcal{Z} = \{z_1,\ldots,z_m\}\subset\Omega$.
The goal is to construct a rational function $r(z)=p(z)/q(z)$, where $p(z)$ and $q(z)$ are polynomials, that approximates $f$ on $\mathcal{Z}$ (or more generally on $\Omega$).

A popular method for rational approximation is the AAA (adaptive Antoulas--Anderson) algorithm~\cite{aaa}, which constructs a sequence of rational functions in the following barycentric form:
\begin{equation}\label{eq:aaa-barycentric}
  r_k(z)
  =
 \left(\sum_{j=1}^k \frac{w_j^{(k)} f(t_j)}{z-t_j}\right)
  \Bigg/
  \left(\sum_{j=1}^k \frac{w_j^{(k)}}{z-t_j}\right).
\end{equation}
Here, the support points $\mathcal{T}^{(k)} = \{t_j\}_{j=1}^k$ are chosen from $\mathcal{Z}$, and one can show that for each $1 \leq j \leq k$, $\lim_{t \rightarrow t_j} r_k(t) = f(t_j)$. AAA constructs $r_k$ iteratively by selecting one support point at a time and recomputing the weights $w^{(k)}=(w_j^{(k)})_{j=1}^k$ at each iteration. 
At iteration $k$, if $\max_{z_i \in \mathcal{Z}\setminus \mathcal{T}^{(k-1)}} |f(z_i)-r_{k-1}(z_i)|$ exceeds a prescribed tolerance $\epsilon$, AAA selects the next support point
\[
t_{k} = \argmax_{z_i \in \mathcal{Z}\setminus \mathcal{T}^{(k-1)}} |f(z_i) - r_{k-1}(z_i)|,
\]
that is, the point at which $r_{k-1}$ attains its largest error. This can be interpreted as a greedy pivoting strategy for selecting interpolating points. Once the $k$th support point is selected, the weights must be updated. 
Given a fixed set of support points $\mathcal{T}^{(k)}$,  weights $w^{(k)}\in\mathbb{C}^k$ are chosen as the entries of the smallest right singular vector of the Loewner matrix $\mat{L}^{(k)} \in \mathbb{C}^{(m-k) \times k}$, with entries
\begin{equation} \label{eq:Laaa}
\mat{L}^{(k)}_{i,j} = \frac{f(z_i)-f(t_j)}{z_i-t_j},
\qquad z_i \in \mathcal{Z}\setminus \mathcal{T}^{(k)}.
\end{equation}
This choice is motivated by the linearization:
\begin{align}\label{eq:linearization-aaa}
\sum_{j=1}^k  \frac{ w_j f(t_j)}{z_i - t_j} \big/ \sum_{j=1}^k \frac{ w_j}{z_i - t_j}  \approx f(z_i)
\quad\Longleftrightarrow\quad 
\sum_{j=1}^k w_j \frac{ f(t_j)}{z_i - t_j}   \approx \sum_{j=1}^k w_j \frac{ f(z_i)}{z_i - t_j} 
\quad\Longleftrightarrow\quad 
\mat{L}^{(k)} w \approx 0.
\end{align}
We supply pseudocode in~\Cref{alg:fast-rational-approximation} for convenience, and refer the reader to the recent survey in~\cite{nakatsukasa2025applications} for a thorough review of many variations and additional developments associated with the AAA algorithm.

The most costly stage of the AAA algorithm is the repeated solving of the weight optimization problem, which requires computing the SVD of the Loewner matrix at each iteration.  We propose a CUR-based alternative (CUR-AAA), where the pivoting stage and selection of $k$ support points is completed via a CUR decomposition of a Loewner matrix related to the data. Once a collection of candidate support points are chosen, an SVD is applied to compute the weights for the barycentric interpolant at the given support points. This cuts down substantially on the overall cost of the algorithm and we find that it is especially useful in settings where high-degree rational functions and many samples in the domain are required.

The CUR-AAA method begins by partitioning the $m$ sample points $\mathcal{Z}$ randomly into two sets: $\mathcal{X}$ and $\mathcal{Y}$. These two sets define a large (nearly square) Loewner matrix $\mat{\tilde{L}}\in\mathbb{C}^{\lceil m/2\rceil\times\lfloor m/2\rfloor}$ with entries
\[
\mat{\tilde{L}}_{ij} \;=\; \frac{f(x_i)-f(y_j)}{x_i-y_j},
\qquad x_i\in\mathcal{X},\ \ y_j\in\mathcal{Y}.
\]
We then use \Cref{alg:lu-cur-cauchy} to compute a CUR approximation of $\mat{\tilde{L}}$ using RPLU (or C2PLU), so that $\mat{\tilde{L}}\approx \mat{C}\mat{U}\mat{R}$. The selected row or column indices can be chosen as support points: we take whichever set minimizes the interpolation error over the remaining points. CUR-AAA is summarized in~\Cref{alg:rplu-rational-approximation}.

\begin{algorithm}[H]
  \caption{CUR-AAA using RPLU}
  \label{alg:rplu-rational-approximation}
  \begin{algorithmic}
    \State \textbf{Input:} sample points $\mathcal{Z}=\{z_1,\ldots,z_m\}$, values $f=(f_1,\ldots,f_m)$, tolerance $\epsilon$
    \State Partition $\mathcal{Z}$ randomly into $\mathcal{X}$ and $\mathcal{Y}$, and split $f$ accordingly into $f^{x}$ and $f^{y}$.
    \State Form the generators $\mat{G},\mat{B}$ for the Loewner matrix $\mat{\tilde{L}}$ as in~\eqref{eq:loewner-generators}.
    \State Obtain a CUR using~\Cref{alg:lu-cur-cauchy} until $  \|u^{(k)}\|_{\infty} \le \epsilon^2 \|u^{(0)}\|_{\infty} $.
    \State Fit rational approximations $r_{\I}$ and $r_{\J}$ using $\I$ and $\J$ as support points, respectively, by forming $\mat{L}$ as in \cref{eq:Laaa} and solving for the barycentric weights. 
    \If{$\max_{z_i \in \mathcal{Z}} |f_i - r_{\I}(z_i)| < \max_{z_i \in \mathcal{Z}} |f_i - r_{\J}(z_i)|$}
      \State \textbf{return} $r_{\I}$
    \Else
      \State \textbf{return} $r_{\J}$
    \EndIf
  \end{algorithmic}
\end{algorithm}

A rigorous analysis of this algorithm is beyond the scope of this work; for now, we provide intuition by analogy with AAA. AAA selects support points from $\mathcal{Z}$, thereby choosing a basis $\{1/(z-t_j)\}$ in which the remaining samples are well-approximated. AAA terminates when the approximation error is small, which (via the linearization in~\cref{eq:linearization-aaa}) is closely tied to the Loewner matrix developing a small singular value.
The CUR-based approach also seeks a basis, but it uses a subset of $\mathcal{X}$ to fit the samples in $\mathcal{Y}$ and a subset of $\mathcal{Y}$ to fit the samples in $\mathcal{X}$. CUR naturally computes a discrete set of basis functions from which remaining non-selected rows (or columns) can be well-represented. 
When there are sufficiently many samples, this factor-of-$2$ reduction in the number of training samples appears to have negligible effect and still leads to an effective basis. Since the large, square Loewner matrix $\mat{L}$ has rapidly decaying singular values, an accurate CUR decomposition is obtained when $\mat{C}$ or $\mat{R}$ (analogous to the rectangular Loewner matrix $\mat{L}^{(k)}$ in AAA) also has a small singular value.

The computational cost of AAA is dominated by repeated SVD computations of the Loewner matrix, which cost $\mathcal{O}(nk^2)$ per iteration, for a total cost of $\mathcal{O}(nk^3)$ over $k$ iterations.
The cost of CUR-AAA is $\mathcal{O}(n\log(n)k)$ for the CUR step and $\mathcal{O}(nk^2)$ for the SVD step, for a total of $\mathcal{O}(nk^2 + n\log(n)k)$.\footnote{We note that for large $k$, the SVD computations in both methods may be accelerated using Lanczos iterations with FMM-accelerated matrix-vector products. We do not explore this here.}

\Cref{fig:aaa_scaling} shows the scaling behavior of AAA and CUR-AAA computed using RPLU and C2PLU for the family $f(z)=\tan(\sqrt{d}\,z^{\sqrt{d}})$ as $d^2$ ranges from $1$ to $20$. This family admits accurate rational approximants of degree $k = \mathcal{O}(d)$ and has $\mathcal{O}(d)$ poles inside the unit disk. Sampling only on the boundary of the disk is not enough to recover all the poles, so the sampling grid must include points inside the disk.
The complexity analysis in this regime is pessimistic: empirically, the runtime scales as $\mathcal{O}(d)$ for CUR-AAA and $\mathcal{O}(d^2)$ for AAA\footnote{We use the baryrat implementation of AAA~\cite{hofreither2021algorithm} in Python.}, as the SVD scaling is $\mathcal{O}(nd)$ for small $d$.
Both algorithms are run with relative tolerance $\epsilon=10^{-11}$.\footnote{These tolerance criteria are not the same. AAA terminates when the relative interpolant error is less than $\epsilon$, while CUR-AAA terminates when the relative upper bound on the row norms drops below $\epsilon$, since it does not compute the interpolant until the end.}
The approximation error over $\mathcal{Z}$ is always smaller for AAA by a factor of $10$ in most cases\footnote{We note that these errors could be decreased by reducing the tolerance parameter $\epsilon$, although this creates numerical issues for all three algorithms at large values of $d$.}. The error on the hold-out grid is comparable across all three methods for $d \geq 100$, although RPLU appears less stable.

\begin{figure}[htbp]
  \centering
  \adjustbox{width=1.0\textwidth}{\includegraphics{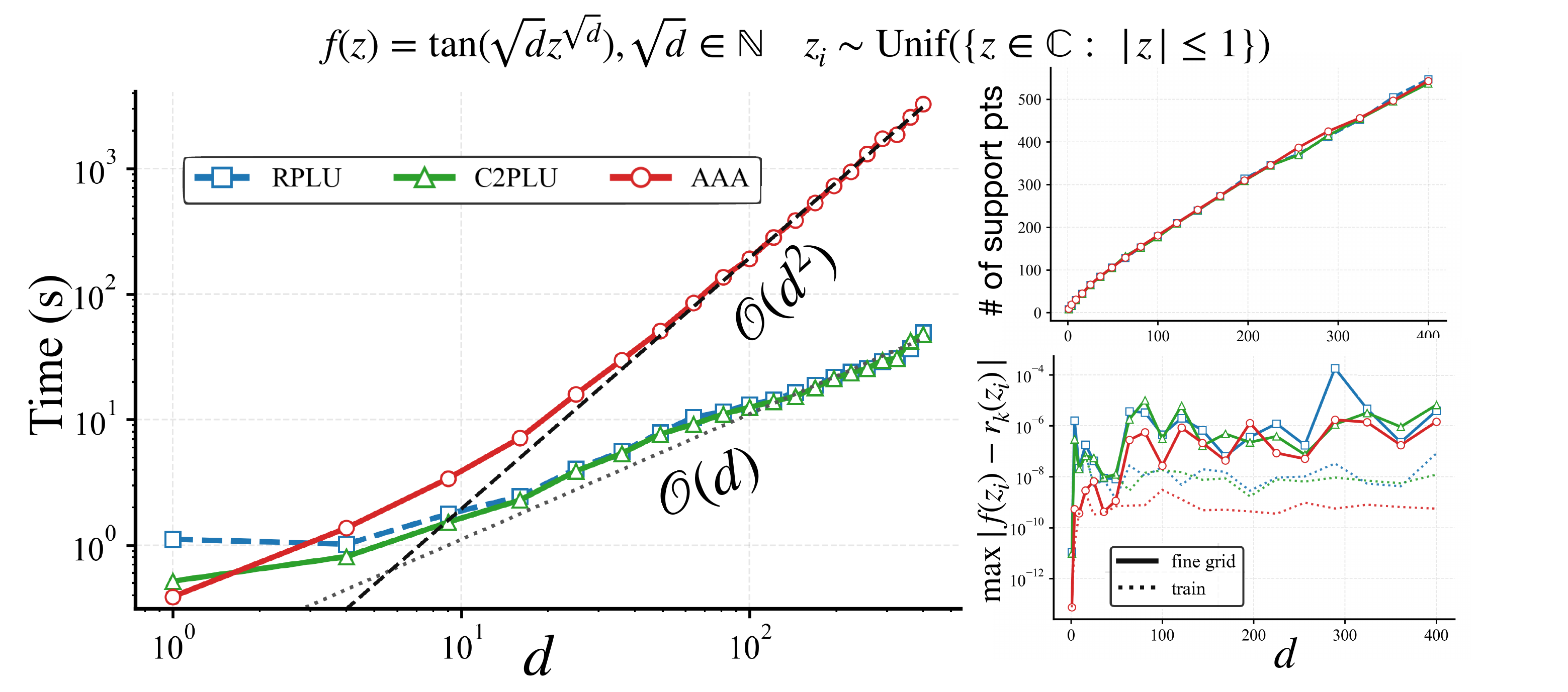}}
  \caption{Scaling analysis of AAA and CUR-AAA (using RPLU and C2PLU). In each case, $200{,}000$ sample points are drawn uniformly from the unit disk, and the algorithms are run until a relative tolerance of $\epsilon=10^{-11}$ is achieved. In most cases, all three algorithms return a similar number of support points (top right), growing roughly linearly with $d$. The maximum interpolation error (bottom left) is shown on the training set $\mathcal{Z}$ and on a hold-out Cartesian grid with $2{,}000{,}000$ points inside the unit disk.}
  \label{fig:aaa_scaling}
\end{figure}

\Cref{fig:aaa_comp_400} shows the interpolant computed by each method for $d=400$, which takes about 50 seconds to compute for CUR-AAA, and about 50 minutes for AAA.
All three methods locate each of the 240 poles inside the unit disk: RPLU and C2PLU locate each pole to accuracy better than $2\times 10^{-12}$, and AAA locates each pole to accuracy $8\times 10^{-12}$.
However, the RPLU interpolant also exhibits two spurious poles inside the unit disk that are not present in the other two interpolants. These spurious poles can be removed by reducing the tolerance to $\epsilon=10^{-10}$.

Overall, we find that CUR-AAA computes high-accuracy interpolants at a fraction of the cost of AAA thanks to the fast CUR decomposition step, although AAA appears to be more robust in our current implementation.

\begin{figure}[t!]
  \centering
  \includegraphics[width=0.7\textwidth,trim={0cm 0cm 0 1cm},clip]{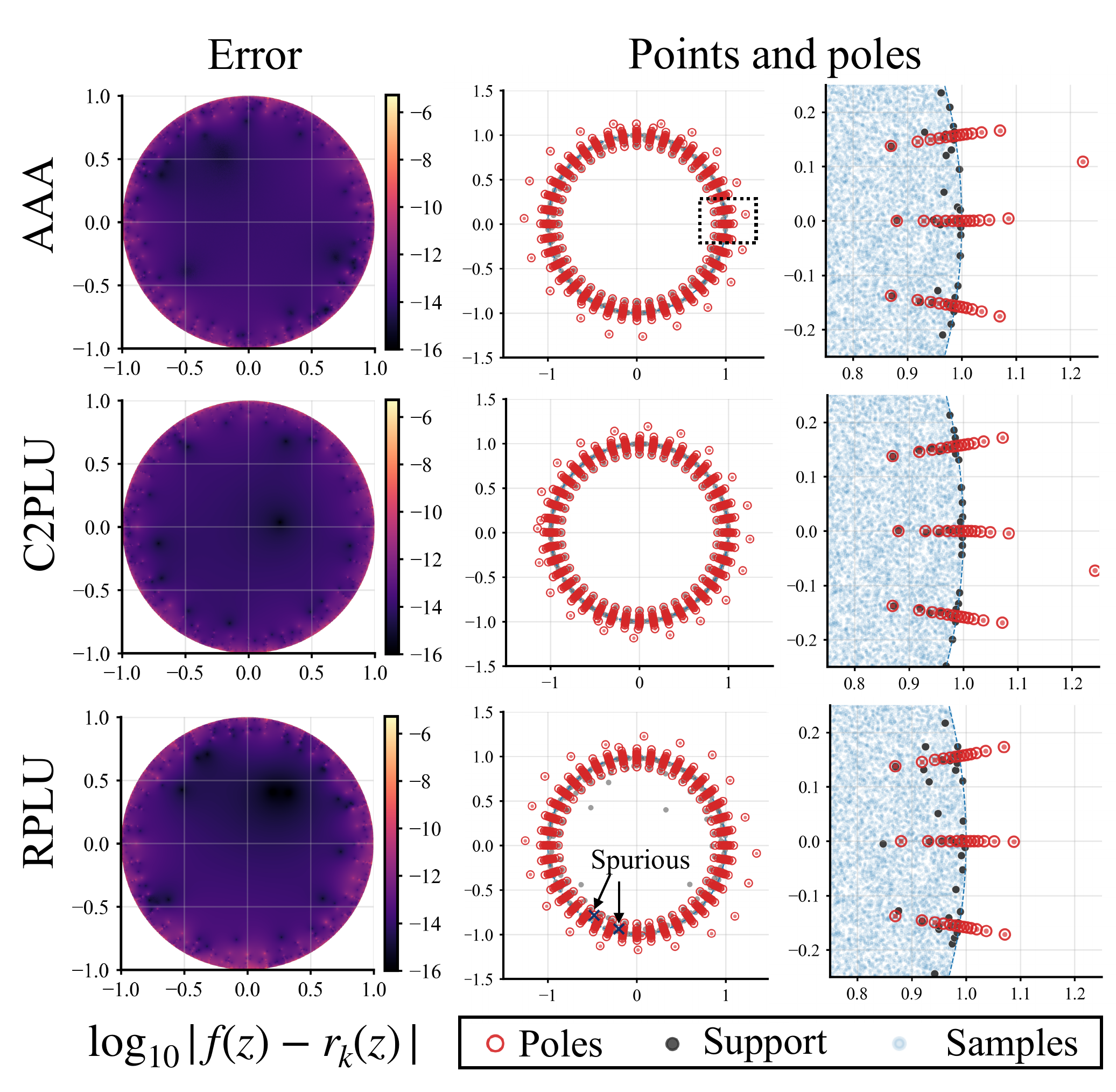}
  \caption{Poles and support points for rational approximants of $f(z)=\tan(20 z^{20})$ constructed from $m=200{,}000$ samples drawn uniformly at random from the unit disk. Rows correspond to AAA (top), CUR-AAA with C2PLU (middle), and CUR-AAA with RPLU (bottom). Red circles mark poles of the approximants, black points mark selected support points, and blue points show samples; the right column is a zoom of the dashed region.}
  \label{fig:aaa_comp_400}
\end{figure}

\subsection{Comparison with factored ADI and extension to other matrices with displacement structure}
A particularly well-suited low-rank approximation method for matrices with low displacement rank (including Cauchy-like matrices) is the factored alternating-direction implicit (ADI) method~\cite{benner2009adi,li2002low}. We briefly discuss factored ADI and compare it with RPLU.

Matrices with low displacement rank can be characterized by a Sylvester displacement equation of the form
\begin{align*}
\mat{M}\mat{A} - \mat{A}\mat{N} = \mat{G}\mat{B},
\end{align*}
where $p= \operatorname{rank}(\mat{G}\mat{B})$ is the displacement rank of $\mat{A}$. Factored ADI is an iterative algorithm that constructs low rank approximations to $\mat{A}$ using the so-called coefficient matrices $\mat{M}$ and $\mat{N}$ as well as the generators $\mat{G}$ and $\mat{B}$. Given shift parameters $\{(\alpha_j,\beta_j)\}_{j=1}^k$, factored ADI iteratively applies shifted inverts and matrix products to  blocks of $p$ columns at each iteration. A full description of the algorithm can be found in~\cite{benner2009adi}. The shifted products and solves are of the form $(\mat{M}-\alpha_j I )(\mat{M}-\alpha_{j+1}\mat{I})^{-1}$ and $(\mat{N}^*-\overline{\beta}_jI)(\mat{N}^*-\overline{\beta}_{j+1}I)^{-1}$. After $k$ iterations, factored ADI produces a rank-$(\le pk)$ approximation,
\[
\mat{A} \approx \mat{Z}_{k}\mat{W}_{k}^*, \qquad \mat{Z}_{k} \in \mathbb{C}^{n \times pk}, \ \mat{W}_{k} \in \mathbb{C}^{m \times pk}.
\]
In the Cauchy-like case, $\mat{M}=\diag(\mathcal{X})$ and $\mat{N}=\diag(\mathcal{Y})$, so each step of factored ADI  reduces to applying four diagonal scalings, each to a set of $p$ columns. When the sets $\mathcal{X} = \{x_1, \cdots, x_n\}$ and $\mathcal{Y} = \{y_1, \cdots, y_m\}$ are well-separated in the complex plane, an optimal choice of the shift parameters yields an error bound of the form~\cite{beckermann2019bounds}
$$ \| \mat{A} - \mat{Z}_k\mat{W}_k^*\|_2 \leq C \rho^{\lfloor k/r \rfloor},$$
where $\rho < 1$ is dictated by the condenser capacity of the sets $\mathcal{X}$ and $\mathcal{Y}$, and $C$ depends on the geometry of sets enclosing $\mathcal{X}$ and $\mathcal{Y}$. Thus, larger separation between $\mathcal{X}$ and $\mathcal{Y}$ leads to smaller $\rho$ and faster convergence.

For Cauchy-like matrices with displacement rank $p = 1$, ADI is observed to often produce near-best low-rank approximations. For $p > 1$, however, ADI can be noticeably suboptimal; see~\Cref{fig:fadi_comparison}. In these cases RPLU typically achieves better approximation quality for a given rank.
Selecting the $2k$ shift parameters needed to attain the optimal convergence rate is another practical hurdle for factored ADI. These shifts depend on $\mathcal{X}$ and $\mathcal{Y}$ and therefore require some a priori description of sets enclosing them. When $\mathcal{X}$ and $\mathcal{Y}$ lie in disjoint disks or intervals, the shifts can be computed with negligible cost~\cite{townsend2018singular}, and the factors can then be formed in $\mathcal{O}((m+n)kp)$ operations. 

For more general configurations, computing the shifts requires a precomputation step involving the poles and zeros of a minimax rational function, with a cost governed by the geometry of the sets containing $\mathcal{X}$ and $\mathcal{Y}$. This step can be difficult when the sets are complicated or unknown. If the sets are overlapping (as in~\cref{sec:cauchy_numerical} and~\cref{sec:fast_rational}), then one must select shift parameters using only the discrete points in $\mathcal{X}$ and $\mathcal{Y}$, and the convergence rate of ADI is expected to be slow. Furthermore, factored ADI does not directly return a CUR factorization. Its output can be post-processed to produce an interpolative CUR approximation in $\mathcal{O}((m+n)r^2+ r^3)$ operations, where $r=kp$ is the rank of the approximation, by applying CPQR to each of the factors in the approximant to subselect rows and columns of $\mat{A}$. 

Factored ADI can be applied to many low-displacement-rank families beyond Cauchy-like matrices, including Vandermonde, Toeplitz, Hankel, Chebyshev--Vandermonde matrices, and more~\cite{beckermann2019bounds,kailath1995displacement}. Extending RPLU beyond the Cauchy-like setting is less direct: for other families of matrices with low displacement rank, the Schur complement retains the algebraic structure of the original matrix only if no pivoting is employed. This makes it impossible to run RPLU in the same manner as in~\Cref{alg:lu-cur-cauchy}, at least naïvely.

However, as discussed in~\cite{heinig2012inversion, pan2000superfast}, one can often convert these matrices into factorizations involving Cauchy-like matrices. For example, consider a Vandermonde matrix $\mat{V}$ with entries $\mat{V}_{jk} = x_j^{k-1}$ for points $X:=\{ x_1, x_2, \cdots, x_m\} \subset \mathbb{C}$. This matrix is numerically low rank whenever $X$ lies on the real line~\cite{beckermann2019bounds} or sufficiently far from the unit circle~\cite{rubin2022bounding}. It satisfies the Sylvester matrix equation
\begin{equation}  \label{eq:vand}
\mat{X} \mat{V} - \mat{V} \mat{Q} = \mat{g} \mat{e}_n^*,  
\end{equation}
where $\mat{g}$ is an $m \times 1$ column vector with $g_{j,1} = x_j^{n-1}\!-\!1$, $\mat{X} = {\rm diag}(x_1, \cdots, x_m)$, and $\mat{Q} \in \mathbb{C}^{n \times n}$ is the permutation matrix that circularly shifts columns to the left. The eigendecomposition of $\mat{Q}$ is known explicitly: letting $\omega = \exp(2 \pi i/n)$, $\mat{Q} = \mat{F}^* \mat{\Lambda} \mat{F}$, where $\mat{F}_{jk} = \omega^{j(2k-1)}/\sqrt{n}$ is a discrete Fourier transform matrix and $\mat{\Lambda} = {\rm diag}(\omega^2, \omega^4, \cdots, \omega^{2n}).$ Applying $\mat{F}^*$ on the left of \cref{eq:vand}, we find that $\mat{V}$ can be expressed as 

\begin{equation} \label{eq:vandcauchy}
\mat{ V} =  {\rm diag}(\mat{g}) \mat{C}  {\rm diag}(\mat{e}_n^T\mat{F}^*) \mat{F} ,
 \end{equation}
where $\mat{C}$ is Cauchy and has entries $\mat{C}_{jk} = 1/(x_j - \omega_k)$, and $\mat{A} = {\rm diag}(\mat{g}) \mat{C}  {\rm diag}(\mat{e}_n^T\mat{F}^*) $ is Cauchy-like. Constructing a low rank approximation to $\mat{A}$ via \Cref{alg:lu-cur-cauchy} immediately yields a low rank approximation to $\mat{V}$ (though it is not a CUR).
Analogous reductions exist for Toeplitz and Hankel matrices~\cite{gohberg1995fast} that can be useful in fast algorithms for solving linear systems; it is known that any Toeplitz or Hankel matrix, whether it is low rank or not, can be transformed into a Cauchy-like matrix with hierarchical low rank structure~\cite{beckermann2025compression, heinig2012inversion}. Applying RPLU in the hierarchical setting is possible but outside the scope of this paper.

\begin{figure}[t!]
  \centering
  \adjustbox{width=1.0\textwidth}{\includegraphics[trim={0cm 4.5cm 0 5cm},clip]{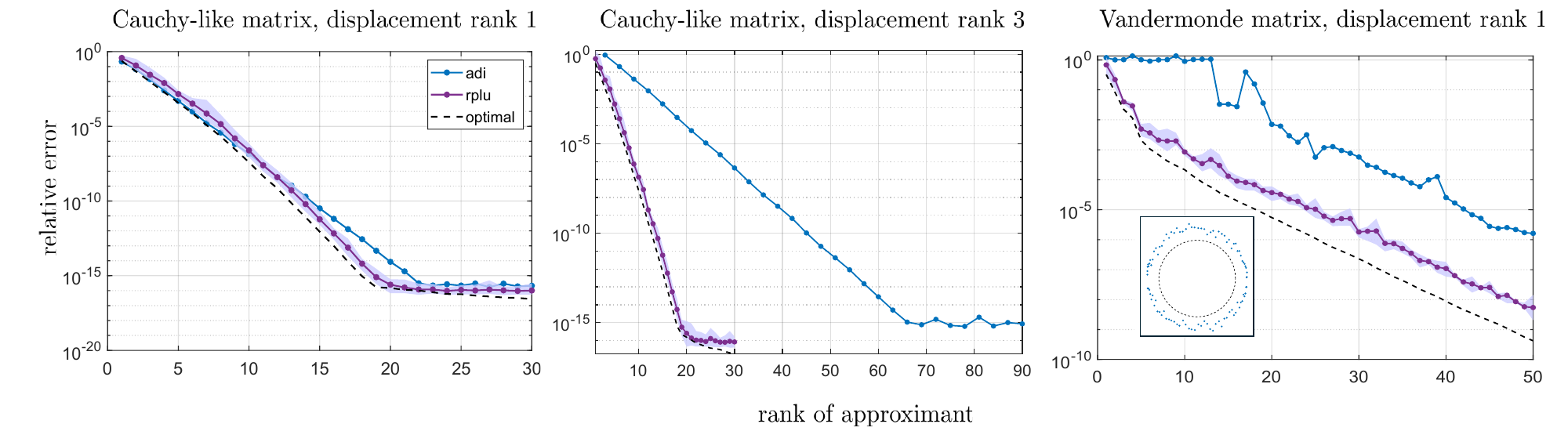}}
  \caption{Approximation quality of RPLU (purple) compared with factored ADI (blue) and the SVD (black dotted) vs rank of constructed low rank approximation. For the Cauchy-like matrices (left and center), $\mathcal{X}$ consists of $100$ equally-spaced samples from  $[-1, -.01]$ and $\mathcal{Y} = -\mathcal{X}$.  For the Vandermonde matrix (right), $\mathcal{X}$ consists of $100$ points sampled in a circle of radius $1.3$ and then jittered in a random direction at a magnitude of at most $.08$ (points shown at bottom right). We apply RPLU to the Cauchy-like matrix $\mat{A}$ in \cref{eq:vandcauchy}, which is then used to construct the low rank approximation (purple). Factored ADI is applied by first computing shift parameters using the discrete sets of points $\mathcal{X}$ and the shifted roots of unity that form the eigenvalues of $\mat{F}$.  In all plots, RPLU is repeated $20$ times and the mean performance is shown as a solid line. The shaded region shows variance across runs.}
  \label{fig:fadi_comparison}
\end{figure}

\section{Conclusion} \label{sec:conclusion}

This work introduced Randomly Pivoted LU (RPLU), a randomized variant of Gaussian elimination with complete pivoting for computing low-rank approximations. 
We proved that when the singular values of $\mat{A}$ decay geometrically at rate $\rho<1/2$, the RPLU iterates converge geometrically at rate $(2\rho)^k$; in practice, we often observe substantially faster convergence.
We also advocated complete $2$-norm pivoting (C2PLU) as a practical heuristic: while it can perform much worse than RPLU in the presence of outliers (see~\cref{fig:rplu_intro}), it is highly effective in many applications and often outperforms RPLU.

We demonstrated two practical settings in which RPLU and C2PLU are particularly effective. For very large structured matrices, their small memory footprint enables high-rank approximations on GPUs. For Cauchy-like matrices, displacement structure allows fast Schur complement updates and efficient computation.
In both cases, these factorizations serve as building blocks for faster downstream algorithms, including preconditioners for large linear systems and fast rational approximation.

Several questions remain open. First, an oversampling bound for RPLU, analogous to~\cref{theorem:rpchol_bound} for RPCholesky, may clarify when the randomized strategy is provably near-optimal without assuming geometric decay. More broadly, sharper theory is needed to narrow the gap between provable guarantees and the performance observed in practice for both RPLU and RPCholesky. Empirically, C2PLU often outperforms RPLU, mirroring observations made by some other authors in the PSD case~\cite{fornace2024column}; since worst-case analyses for deterministic pivoting are often pessimistic, a natural next step is to identify regimes in which such greedy schemes can be proved near-optimal.

\section*{Acknowledgements}
We thank Ethan Epperly, Joel Tropp and Elizaveta Rebrova for helpful discussions and suggestions.
\newpage
\appendix

\numberwithin{equation}{section}
\numberwithin{figure}{section}
\numberwithin{table}{section}
\numberwithin{algorithm}{section}

\section{Proofs}

\subsection{Proof of~\cref{theorem:doubling}} \label{sec:doubling_proof}
\begin{proof}
  This proof follows closely the argument used to prove the doubling lemma (\cref{lem:rchol_doubling}) for RPCholesky in~\cite{rpchol}.

  Let $\sigma_j$ denote the $j$-th singular value of $\mat{A}$. Define $\mat{G}\coloneqq \mat{A}^*\mat{A}$ and $\mat{G}^{(1)}\coloneqq \mat{A}^{(1)*}\mat{A}^{(1)}$.
  By the Ky Fan variational principle,
  \begin{equation*}
  \tr\!\bigl(\mat{G}^{(1)}-\lowrank{\mat{G}^{(1)}}_{k-1}\bigr)
  =\min_{\substack{\vec u_k,\dots,\vec u_m\\ \text{orthonormal}}}\ \sum_{j=k}^{m}\vec u_j^{*}\mat{G}^{(1)}\vec u_j.
  \end{equation*}
  Choose \(\vec u_{k+1},\dots,\vec u_m\) to be the unit eigenvectors \(\vec v_{k+1},\dots,\vec v_m\) associated with the \(m-k\) smallest eigenvalues of \(\mat{G}\).
  The final vector \(\vec g=\vec u_k\) will be fixed later; for now, we observe that
  \begin{align}
  \EX\tr\!\bigl(\mat{G}^{(1)}-\lowrank{\mat{G}^{(1)}}_{k-1}\bigr)
  &\le \sum_{j=k+1}^{m}\vec v_j^{*}\EX[\mat{G}^{(1)}]\vec v_j \;+\; \EX\bigl[\vec g^{*}\mat{G}^{(1)}\vec g\bigr]\notag\\
  &\le \sum_{j=k+1}^{m}\vec v_j^{*}\,2\Phi(\mat{G})\,\vec v_j \;+\; \EX\bigl[\vec g^{*}\mat{G}^{(1)}\vec g\bigr],\label{eq:split-main}
  \end{align}
  where we have used the fact that \(\EX[\mat{G}^{(1)}]\preceq 2\Phi(\mat{G})\) (see~\cref{eq:G1_bound}) and the linearity of expectation.
  To bound the first term, we use the fact that \(\Phi(\mat{G}) \preceq \mat{G}\):
\begin{equation*}
2\sum_{j=k+1}^{m}\vec v_j^{*}\Phi(\mat{G})\,\vec v_j\;\le\;2\sum_{j=k+1}^{m} \vec v_j^{*}\mat{G}\vec v_j\;\le\;2\sum_{j=k+1}^{m} \sigma_j^{2} = 2\tr(\mat{G}-\lowrank{\mat{G}}_{k}).
\end{equation*}

  We now bound the second term. Denote \((r_1,c_1)\) as the first pivot; the first LU update is
  \[
  \mat{A}^{(1)} \;=\; \mat{A} \;-\; \frac{1}{a_{r_1c_1}}\;\mat{A}\mathbf e_{c_1}\,\mathbf e_{r_1}^{\top}\mat{A}.
  \]
  Its Gram matrix is
  \begin{align}
  \mat{G}^{(1)}
  &= \mat{A}^{(1)*}\mat{A}^{(1)}\notag\\
  &= \mat{A}^*\mat{A}
  - \frac{1}{a_{r_1c_1}}\;\mat{A}^*\mat{A}\mathbf e_{c_1}\,\mathbf e_{r_1}^{\top}\mat{A}
  - \frac{1}{\overline{a_{r_1c_1}}}\;\mat{A}^*\mathbf e_{r_1}\,\mathbf e_{c_1}^{*}\mat{A}^*\mat{A}
  + \frac{1}{|a_{r_1c_1}|^{2}}\,\|\mat{A}\mathbf e_{c_1}\|_2^2\,\mat{A}^*\mathbf e_{r_1}\,\mathbf e_{r_1}^{\top}\mat{A}.
  \label{eq:C1-expanded}
  \end{align}

  Let \(\mat{P}\) be the orthogonal projector onto the span of the top \(k\) eigenvectors of \(\mat{G}\), and set \(\mat{P}_\perp=\Id-\mat{P}\).
  We distinguish two cases depending on the first pivot column \(c_1\).

  \paragraph{Case 1: \(\mat{P}\mathbf e_{c_1}=0\).}
  Set \(\vec g=\vec v_1\), the top eigenvector of \(\mat{G}\), and let \(\vec w_1=\mat{A}\vec v_1/\sigma_1\) be the corresponding left singular vector.
  Using \eqref{eq:C1-expanded} and \(\vec v_1^{*}\mat{A}^*\mat{A}\mathbf e_{c_1}=\sigma_1^2\,\vec v_1^{*}\mathbf e_{c_1}=0\), we obtain
  \begin{equation*}
  \vec v_1^{*}\mat{G}^{(1)}\vec v_1
  =\sigma_1^{2}
  +\frac{\sigma_1^{2}}{|a_{r_1c_1}|^{2}}\,
  |w_{1,r_1}|^{2}\,\|\mat{A}\mathbf e_{c_1}\|_2^{2}.
  \end{equation*}
  Taking the conditional expectation over \(r_1\) given \(c_1\),
  \begin{align}
  \EX\!\bigl[\vec v_1^{*}\mat{G}^{(1)}\vec v_1 \mid c_1\bigr]
  &=\sigma_1^{2}
  +\sigma_1^{2}\sum_{i : a_{ic_1} \neq 0}^{n}
  \frac{|a_{ic_1}|^{2}}{\|\mat{A}\mathbf e_{c_1}\|_2^{2}}\cdot
  \frac{\|\mat{A}\mathbf e_{c_1}\|_2^{2}}{|a_{ic_1}|^{2}}\,|w_{1,i}|^{2}\notag\\
  &\leq\sigma_1^{2}+\sigma_1^{2}\sum_{i=1}^{n}|w_{1,i}|^{2}
  \;=\;2\sigma_1^{2}.\label{eq:case1-exp}
  \end{align}
  
  \paragraph{Case 2: \(\mat{P}\mathbf e_{c_1}\neq 0\).}
  Set
  \[
  \vec g
  =\frac{\mat{P}\mathbf e_{c_1}}{\|\mat{P}\mathbf e_{c_1}\|_2}.
  \]
  Expanding \(\vec g^* \mat{G}^{(1)} \vec g\) using \eqref{eq:C1-expanded}, and using that
  \[
  (\mat{P}\mathbf e_{c_1})^* \mat{A}^* \mat{A}\,\mathbf e_{c_1}
  = (\mat{P}\mathbf e_{c_1})^* \mat{A}^* \mat{A}\,\mat{P}\mathbf e_{c_1}
  = \|\mat{A}\mat{P}\mathbf e_{c_1}\|_2^2,
  \]
  we obtain the identity
  \begin{align}
  \frac{1}{\|\mat{P}\mathbf e_{c_1}\|_2^{2}}\,
  \mathbf e_{c_1}^{*}\mat{P}\,\mat{G}^{(1)}\,\mat{P}\mathbf e_{c_1}
  &=\frac{1}{|a_{r_1c_1}|^{2}\,\|\mat{P}\mathbf e_{c_1}\|_2^{2}}
  \Bigl(
  |a_{r_1c_1}|^{2}\,\|\mat{A}\mat{P}\mathbf e_{c_1}\|_2^{2}
  -2\|\mat{A}\mat{P}\mathbf e_{c_1}\|_2^{2} \Re\left({a_{r_1c_1}\,\,\overline{[\mat{A}\mat{P}]}_{r_1c_1}}\right)\notag\\
  &\hspace{6.7em}
  +\ \|\mat{A}\mathbf e_{c_1}\|_2^{2}\,|[\mat{A}\mat{P}]_{r_1c_1}|^{2}
  \Bigr) \\
  & = \Bigl( \|\mat{A}\mat{P}\mathbf e_{c_1}\|_2^2\big|a_{r_1c_1}\, - \,[\mat{A}\mat{P}]_{r_1c_1}\big|^{2}
  +\ \big|[\mat{A}\mat{P}]_{r_1c_1}\big|^{2}\,\|\mat{A}\mat{P}_{\perp}\mathbf e_{c_1}\|_2^{2}\Bigr) \\
  & = \Bigl( \|\mat{A}\mat{P}\mathbf e_{c_1}\|_2^2 \,|[\mat{A}\mat{P}_{\perp}]_{r_1c_1}|^{2}
  +\ |[\mat{A}\mat{P}]_{r_1c_1}|^{2}\,\|\mat{A}\mat{P}_{\perp}\mathbf e_{c_1}\|_2^{2}\Bigr)
  \label{eq:case2-pre}
  \end{align}
  Taking the conditional expectation over \(r_1\) given \(c_1\) and using \(\prob(r_1=i\mid c_1)=|a_{ic_1}|^{2}/\|\mat{A}\mathbf e_{c_1}\|_2^{2}\), we can rewrite \eqref{eq:case2-pre} as
  \begin{align}
  \EX\!\bigl[\vec g^{*}\mat{G}^{(1)}\vec g \mid c_1\bigr]
  &=\frac{1}{\|\mat{A}\mathbf e_{c_1}\|_2^{2}\,\|\mat{P}\mathbf e_{c_1}\|_2^{2}}
  \sum_{i:\,a_{ic_1}\neq 0}
  \Bigl( [\mat{A}\mat{P}_{\perp}]_{ic_1}^{2}\,\|\mat{A}\mat{P}\mathbf e_{c_1}\|_2^{2}
  + [\mat{A}\mat{P}]_{ic_1}^{2}\,\|\mat{A}\mat{P}_{\perp}\mathbf e_{c_1}\|_2^{2}\Bigr)\notag\\
  &\le
  \frac{2}{\|\mat{A}\mathbf e_{c_1}\|_2^{2}\,\|\mat{P}\mathbf e_{c_1}\|_2^{2}}\,
  \|\mat{A}\mat{P}_{\perp}\mathbf e_{c_1}\|_2^{2}\,\|\mat{A}\mat{P}\mathbf e_{c_1}\|_2^{2}\notag\\
  &\le
  2\sigma_1^{2}\,\frac{\|\mat{A}\mat{P}_{\perp}\mathbf e_{c_1}\|_2^{2}}{\|\mat{A}\mathbf e_{c_1}\|_2^{2}},
  \label{eq:case2-exp}
  \end{align}
  where in the last line we used \(\|\mat{A}x\|_2\le \sigma_1\|x\|_2\).
  
  Combining \eqref{eq:case1-exp}--\eqref{eq:case2-exp} and taking the expectation over \(c_1\) with \(\prob(c_1=j)=\|\mat{A}\mathbf e_j\|_2^{2}/\|\mat{A}\|_F^{2}\),
  \begin{align}
  \EX\bigl[\vec g^{*}\mat{G}^{(1)}\vec g\bigr]
  &\le
  \sum_{j:\,\mat{P}\mathbf e_{j}=0}\frac{\|\mat{A}\mathbf e_{j}\|_2^{2}}{\|\mat{A}\|_F^{2}}\cdot 2\sigma_1^{2}
  +
  \sum_{j:\,\mat{P}\mathbf e_{j}\neq 0}\frac{\|\mat{A}\mathbf e_{j}\|_2^{2}}{\|\mat{A}\|_F^{2}}\cdot
  2\sigma_1^{2}\,\frac{\|\mat{A}\mat{P}_{\perp}\mathbf e_{j}\|_2^{2}}{\|\mat{A}\mathbf e_{j}\|_2^{2}}\notag\\
  &\le
  \frac{2\sigma_1^{2}}{\|\mat{A}\|_F^{2}}\sum_{j}\|\mat{A}\mat{P}_{\perp}\mathbf e_{j}\|_2^{2}
  =
  \frac{2\sigma_1^{2}}{\|\mat{A}\|_F^{2}}\sum_{j=k+1}^{n}\sigma_j^{2}.
  \label{eq:g-term-final}
  \end{align}
  where we used that $\|\mat{A}e_{j}\|_2^2 =  \|\mat{A}\mat{P}_{\perp}e_{j}\|_2^2 + \|\mat{A}\mat{P}e_{j}\|_2^2$. Using \(\sigma_1^{2}\le \|\mat{A}\|_F^{2}\), we conclude from \eqref{eq:g-term-final} that
  \begin{equation}
  \EX\bigl[\vec g^{*}\mat{G}^{(1)}\vec g\bigr] \le 2\tr(\mat{G}-\lowrank{\mat{G}}_{k})
  \end{equation}

  Coming back to \eqref{eq:split-main}, we obtain:
  \[
  \EX\tr\!\bigl(\mat{G}^{(1)}-\lowrank{\mat{G}^{(1)}}_{k-1}\bigr)
  \;\le\;2\tr(\mat{G}-\lowrank{\mat{G}}_{k}) +
  2\tr(\mat{G}-\lowrank{\mat{G}}_{k})
  =
  4\,\tr\!\bigl(\mat{G}-\lowrank{\mat{G}}_{k}\bigr).
  \]
  Applying this one-step bound repeatedly,
  \begin{align*}
  \EX\|\mat{A}^{(k)}\|_{F}^{2}
  =\EX\tr(\mat{G}^{(k)})
  &\le 4\,\EX\tr\!\bigl(\mat{G}^{(k-1)}-\lowrank{\mat{G}^{(k-1)}}_{1}\bigr)\\
  &\le 4^{2}\,\EX\tr\!\bigl(\mat{G}^{(k-2)}-\lowrank{\mat{G}^{(k-2)}}_{2}\bigr)
  \;\le\;\cdots\\
  &\le 4^{k}\,\tr\!\bigl(\mat{G}-\lowrank{\mat{G}}_{k}\bigr)
  \;=\;
  4^{k}\,\|\mat{A}-\lowrank{\mat{A}}_{k}\|_{F}^{2},
  \end{align*}
  which is \cref{eq:doubling}.
  \end{proof}
  
  \subsection{Proof of~\cref{prop:SRPLU_vs_RPCholesky}} \label{sec:SRPLU_vs_RPCholesky_proof}
  \begin{proof}
    Taking expectations, we have
    \begin{equation} \label{eq:trace_diff_1}
    \tr\!\bigl(\EX[\mat{C}^{(1)}]\bigr) \;-\; \tr\!\bigl(\EX[\tilde{\mat{C}}^{(1)}]\bigr)
    =  \sum_{i,j}\frac{|c_{ij}|^{2}}{\|\mat{C}\|_F^2}\,\tr(\mat{K}_{ij}) \;-\; \frac{\|\mat{C}\|_F^2}{\tr(\mat{C})},
    \end{equation}
    where \(\mat{K}_{ij}\coloneqq \mat{L}_{ij}\,\mat{B}_{ij}^{\dagger}\,\mat{L}_{ij}^{*}\) (cf.~\cref{eq:SRPLU}).
    Define
    \[
      t_i \coloneqq
      \begin{cases}
        \|\mat{c}_i\|_2^2/c_{ii}, & c_{ii} \neq 0, \\
        0, & c_{ii} = 0,
      \end{cases}
    \]
    where \(\mat{c}_i\) denotes the \(i\)th column of \(\mat{C}\). Note that since \(\mat{C}\succeq 0\), the condition \(c_{ii}=0\) implies \(\|\mat{c}_i\|_2^2=0\). Furthermore, since \(\mat{K}_{ij}\succeq 0\), we have \(\tr(\mat{K}_{ij})\ge t_i\ge 0\) for all \(i,j\).
    Therefore
    \[
    \sum_{i,j}|c_{ij}|^{2}\,\tr(\mat{K}_{ij})
    \ge
    \sum_{i}t_i\!\sum_{j}|c_{ij}|^{2} \ge \sum_{i}c_{ii}\,t_i^{2}.
    \]
    where we have used that \(\sum_{j}|c_{ij}|^{2}=\|\mat{c}_i\|_2^{2} = c_{ii}t_i\).
    By Cauchy-Schwarz,
    \[
    \sum_{i}c_{ii}\,t_i^{2}
    \;\ge\;
    \frac{\bigl(\sum_{i}c_{ii}t_i\bigr)^{2}}{\sum_{i}c_{ii}}
    =\frac{\|\mat{C}\|_F^{4}}{\tr(\mat{C})}.
    \]
    Dividing by \(\|\mat{C}\|_F^{2}\), we get
    \[
     \sum_{i,j}\frac{|c_{ij}|^{2}}{\|\mat{C}\|_F^2}\,\tr(\mat{K}_{ij})
    \ge \frac{\|\mat{C}\|_F^{2}}{\tr(\mat{C})} \, 
    \]
    which shows that \eqref{eq:trace_diff_1} is non-negative, and hence proves \cref{eq:off-diag-2-diag-1}.
    \end{proof}

  \subsection{Proof of \cref{prop:trace_ratio}}
\label{sec:dynamics_proofs}

  Let $\mat{C}= \mat{U} \mat{\Lambda} \mat{U}^*$ be the eigendecomposition of $\mat{C}$, where $\mat{U} \in \mathbb{C}^{n \times n}$ is unitary and $\mat{\Lambda} = \diag(d_1, d_2, \dots, d_r, 0, \dots, 0)$ with $d_1\ge\cdots\ge d_r >0$ the non-zero eigenvalues of $\mat{C}$.
  Since the trace and the map $\Phi$ are invariant under unitary transformations, we have $\tr(\Phi^{\circ k}(\mat{C})) = \tr(\Phi^{\circ k}(\mat{\Lambda}))$.
  Let $\mat{D}^{(0)}=\mat{D}=\diag(d_1,\dots,d_r)$ be the $r \times r$ diagonal matrix of non-zero eigenvalues, and define
  \[
  \mat{D}^{(k+1)}=\Phi(\mat{D}^{(k)}),\qquad
  t_k=\tr\mat{D}^{(k)}=\sum_{i=1}^r d_i^{(k)},\qquad
  p_i^{(k)}=\frac{d_i^{(k)}}{t_k}\in(0,1).
  \]
  From the definition of $\Phi$, we have
  \[
  d_i^{(k+1)}=d_i^{(k)}\Bigl(1-\frac{d_i^{(k)}}{t_k}\Bigr)=t_k\,p_i^{(k)}\bigl(1-p_i^{(k)}\bigr),
  \qquad
  \frac{t_{k+1}}{t_k}=1-q_k,\ \ \text{where}\ \ q_k=\sum_{i=1}^r \bigl(p_i^{(k)}\bigr)^2.
  \]
  We will show that $p^{(k)} \to (\frac{1}{r}, \dots, \frac{1}{r})$, which implies that $\frac{t_{k+1}}{t_k} \to 1 - \frac{1}{r}$ and establishes~\cref{prop:trace_ratio}.
  
  Normalizing by $t_{k+1}=t_k(1-q_k)$ yields the induced map on the simplex
  \begin{equation}\label{eq:update}
  p_i^{(k+1)}=\frac{p_i^{(k)}(1-p_i^{(k)})}{1-q_k}.
  \end{equation}
  From \eqref{eq:update}, for any indices $i,j$,
  \begin{equation}\label{eq:pair}
  p_i^{(k+1)}-p_j^{(k+1)}
  =\frac{\bigl(p_i^{(k)}-p_j^{(k)}\bigr)\bigl(1-(p_i^{(k)}+p_j^{(k)})\bigr)}{1-q_k}.
  \end{equation}
  Since $p_i^{(k)}+p_j^{(k)}\le1$, the mapping is order preserving. Since the entries $d_i$ are decreasing, it is sufficient to show that $p^{(k)}_1  - p^{(k)}_r \to 0$.
  
  Let $M_k= p_1^{(k)}$ and $m_k= p_r^{(k)}$. By \eqref{eq:pair}:
  \begin{equation}\label{eq:osc-step}
  M_{k+1}-m_{k+1} = \frac{1-(M_k+m_k)}{1-q_k}\,(M_k-m_k).
  \end{equation}
  Note that
  \begin{equation}\label{eq:osc-step-1}
  \frac{1-(M_k+m_k)}{1-q_k} = 1 - \frac{(M_k+m_k)-q_k}{1-q_k} \leq 1 - \frac{m_k}{1-q_k} \le 1 - m_k 
  \end{equation}
  where we have used that $q_k=\sum_i\bigl(p_i^{(k)}\bigr)^2\ \le\ M_k\sum_i p_i^{(k)}=M_k$ and $1-q_k \le 1$.
  Furthermore, 
  $m_k$ is nondecreasing: \eqref{eq:update},
  \[
  m_{k+1}=\frac{m_k(1-m_k)}{1-q_k}
  \ \ge\ \frac{m_k(1-m_k)}{1-m_k}=m_k,
  \]
  where we have used that $q_k=\sum_i\bigl(p_i^{(k)}\bigr)^2\ \ge\ m_k\sum_i p_i^{(k)}=m_k$.
  Hence $1 - m_0 \le 1 - m_k$ for all $k \ge 0$ and, from \eqref{eq:osc-step} and \eqref{eq:osc-step-1},
  \[
  M_{k+1}-m_{k+1}
  \ \le\ (1-m_k)\,(M_k-m_k)
  \ \le\ \prod_{i=0}^{k} (1-m_i) (M_0-m_0)
  \ \le\ (1-m_0)^{k+1} (M_0-m_0)
  \]
  which implies that $M_k - m_k \to 0$.

\section{Algorithms}

\begin{algorithm}[H]
  \caption{Q-less truncated pivoted QR~\cite{stewart1999four}}
  \label{alg:pivoted_qr}
  \begin{algorithmic}
    \State \textbf{Input:} Matrix $\mat{A} \in \mathbb{R}^{n \times m}$, target rank $r$, initial column norms $\bm{c} = [\|\mat{A}_{:,j}\|_2^2]_{j=1}^m$
    \State \textbf{Output:} Upper triangular factor $\mat{R} \in \mathbb{R}^{r \times r}$, selected column indices $\J_r$
    \State Initialize $\J_0 \leftarrow \emptyset$, $\mat{R} \leftarrow \mat{0}_{r \times r}$
    \For{$k = 1$ to $r$}
      \State \textbf{Step 1: Select pivot column}
      \State Pick $j_k = \argmax_{j \notin \J_{k-1}} \bm{c}_j$ (or sample with $\prob(j_k=j) = \bm{c}_{j} / \sum_{j'} \bm{c}_{j'}$)
      \State $\J_k \leftarrow \J_{k-1} \cup \{j_k\}$
      \State $u \leftarrow \mat{A}_{:,j_k}$

      \State \textbf{Step 2: Gram--Schmidt step $u - Q_{k-1}Q_{k-1}^T u$ using $Q_{k-1} = A_{:,\J_{k-1}} R^{-1}_{1:k-1,1:k-1}$}
      \State $v \leftarrow (\mat{A}_{:,\J_{k-1}})^T u$
      \State Solve $\mat{R}_{1:k-1, 1:k-1}^T y = v$
      \State Solve $\mat{R}_{1:k-1, 1:k-1} w = y$
      \State $u \leftarrow u - \mat{A}_{:,\J_{k-1}} w$
      \State $\mat{R}_{1:k-1, k} \leftarrow y$
      \State \textbf{Step 3: Gram-Schmidt (second pass)}
      \State $v \leftarrow (\mat{A}_{:,\J_{k-1}})^T u$
      \State Solve $\mat{R}_{1:k-1, 1:k-1}^T y = v$
      \State Solve $\mat{R}_{1:k-1, 1:k-1} w = y$
      \State $u \leftarrow u - \mat{A}_{:,\J_{k-1}} w$
      \State $\mat{R}_{1:k-1, k} \leftarrow \mat{R}_{1:k-1, k} + y$
      \State $\mat{R}_{k,k} \leftarrow \|u\|_2$
      \State $u \leftarrow u / \mat{R}_{k,k}$

      \State \textbf{Step 4: Downdate column norms}
      \State $g \leftarrow \mat{A}^T u$
      \State $\bm{c} \leftarrow \bm{c} - |g|^{\odot 2} $
    \EndFor
  \end{algorithmic}
\end{algorithm}

\begin{algorithm}[H]
  \caption{QR-based CUR decomposition}
  \label{alg:qr_cur}
  \begin{algorithmic}
    \State \textbf{Input:} Matrix $\mat{A} \in \mathbb{R}^{n \times m}$, target rank $r$
    \State \textbf{Output:} $\I$, $\J$, $\mat{U}$
    \State $[\mat{R}_1, \J] \leftarrow \text{QR}(\mat{A}, r )$ \Comment{Select columns $\J$ via pivoted QR}
    \State $[\mat{R}_2, \I] \leftarrow \text{QR}(\mat{A}^T, r )$ \Comment{Select rows $\I$ via pivoted QR on $\mat{A}^T$}
    \State Assemble $\mat{Z} = \mat{A}_{:,\J}^T \mat{A} \mat{A}_{\I,:}^T$ using matvecs
    \State Compute $\mat{U} =(\mat{R}_1^T \mat{R}_1)^{-1} \mat{Z} (\mat{R}_2^T \mat{R}_2)^{-1}$ using triangular solves
  \end{algorithmic}
\end{algorithm}

\begin{algorithm}[H]
  \caption{AAA algorithm for rational approximation}
  \label{alg:fast-rational-approximation}
  \begin{algorithmic}
    \State \textbf{Input:} sample points $\mathbf{Z}=(z_i)_{i=1}^m$, values $\mathbf{F}=(f_i)_{i=1}^m$, tolerance $\epsilon$
    \State Initialize $\mathcal{S}\leftarrow \emptyset$, $r \leftarrow 0$
    \For{$k = 1,2,3,\ldots$}
      \State $t_k \leftarrow \argmax_{z_i \in \mathbf{Z}\setminus \mathcal{S}} |f_i - r(z_i)|$
      \State $\mathcal{S} \leftarrow \mathcal{S}\cup\{t_k\}$
      \State Form $\mat{L}$ with entries $\mat{L}_{i,\ell}=\frac{f(z_i)-f(t_\ell)}{z_i-t_\ell}$ for $z_i\in \mathbf{Z}\setminus\mathcal{S}$
      \State $w \leftarrow$ smallest right singular vector of $\mat{L}$
      \State Define $r$ by the barycentric form \eqref{eq:aaa-barycentric}
      \State \textbf{Terminate} if $\max_{1\le i\le m} |f_i - r(z_i)| \le \epsilon$
    \EndFor
    \State \textbf{Output:} rational function $r$
  \end{algorithmic}
\end{algorithm}

\section{Row norm bounds for Cauchy-like matrices}\label{sec:cauchy-row-norm-bound}
We describe a tree-based algorithm for computing upper bounds of row norms for Cauchy-like matrices, similar to a Barnes-Hut algorithm.

Let $\mat{A}\in\mathbb{C}^{n\times m}$ be Cauchy-like with displacement rank $p$:
\begin{equation}\label{eq:gbub-cauchy-like}
a_{i,j} \;=\; \sum_{\ell=1}^p \frac{g_{i,\ell}\,b_{\ell,j}}{x_i-y_j},
\qquad x_i\neq y_j \ \ \forall i,j,
\end{equation}
where $\mat{G}=(g_{i,\ell})\in\mathbb{C}^{n\times p}$ and $\mat{B}=(b_{\ell,j})\in\mathbb{C}^{p\times m}$ are the generators.
Our goal is to compute an upper bound vector $u\in\mathbb{R}^n$ such that, for some constant $\nu \ge 1$,
\begin{equation}\label{eq:gbub-goal}
\norm{\mat{A}_{i,:}}_2^2 \;\le\; u_i \;\le\; \nu\,\norm{\mat{A}_{i,:}}_2^2
\qquad \forall i,
\end{equation}
where the squared row norm is given by
\[
\|\mat{A}_{i,:}\|_2^2 = \sum_{j = 1}^m \frac{\bigl|\mat{G}_{i,:}\mat{B}_{:,j}\bigr|^2}{|x_i - y_j|^2}.
\]
We refer to the points $x_i$ as \emph{target points} and $y_j$ as \emph{source points}.
For subsets $S \subseteq \{1, \ldots, m\}$ and $T \subseteq \{1, \ldots, n\}$, define the minimum and maximum squared distances:
\[
d_{\min}^2(S,T) = \min_{i \in T, j \in S} |x_i - y_j|^2, \qquad
d_{\max}^2(S,T) = \max_{i \in T, j \in S} |x_i - y_j|^2,
\]
and the Gram matrix
\[
\mat{H}_S \;\coloneqq\; \sum_{j\in S} \mat{B}_{:,j}\mat{B}_{:,j}^\ast \ \in\ \mathbb{C}^{p\times p}.
\]

To illustrate the key idea, suppose a set of source points $S$ satisfies
\[
d_{\max}^2(S,\{i\}) \leq \nu d_{\min}^2(S,\{i\})
\]
for a target point $i$.
Then the corresponding terms in the sum are bounded by:
\[
\sum_{j\in S}\frac{\bigl|\mat{G}_{i,:}\mat{B}_{:,j}\bigr|^2}{|x_i-y_j|^2}
\le
\frac{\mat{G}_{i,:}\,\mat{H}_S\,\mat{G}_{i,:}^\ast}{d_{\min}^2(S,\{i\})}
\le 
\nu \frac{\mat{G}_{i,:}\,\mat{H}_S\,\mat{G}_{i,:}^\ast}{d_{\max}^2(S,\{i\})}
\le \nu \sum_{j\in S}\frac{\bigl|\mat{G}_{i,:}\mat{B}_{:,j}\bigr|^2}{|x_i-y_j|^2},
\]
which establishes~\cref{eq:gbub-goal} for the contribution from source points in $S$.

To compute the full sum over all target points, we partition the source and target points using two quadtrees and construct interaction lists.
The algorithm in~\cref{alg:gbub-precompute} constructs these quadtrees and, for each target leaf $\ell$, maintains two lists:
\begin{itemize}
  \item $\mathcal{A}_\ell$: a list of aggregated entries $(s,\alpha)$, where $s$ is a source node and $\alpha = 1/d_{\min}^2(s,\ell)$.
  \item $\mathcal{E}_\ell$: a list of source leaf nodes $s$ that must be handled exactly (not aggregated).
\end{itemize}

\begin{algorithm}[H]
\caption{Precompute interaction lists}\label{alg:gbub-precompute}
\begin{algorithmic}[1]
\State \textbf{Input:} target points $\{x_i\}_{i=1}^n$, source points $\{y_j\}_{j=1}^m$, factor $\nu\ge 1$, leaf size $L$
\State \textbf{Output:} quadtrees $\mathcal{T}_{tgt}$, $\mathcal{T}_{src}$ and interaction lists $(\mathcal{A}_\ell,\mathcal{E}_\ell)$ for each target leaf $\ell$
\State Build target quadtree $\mathcal{T}_{tgt}$ on $\{x_i\}$ with max leaf size $L$
\State Build source quadtree $\mathcal{T}_{src}$ on $\{y_j\}$ with max leaf size $L$
\State Initialize $(\mathcal{A}_\ell,\mathcal{E}_\ell)\gets(\emptyset,\emptyset)$ for every target leaf $\ell$
\State Initialize stack $\mathcal{S}\gets\{(s_0,t_0)\}$ with root nodes of $(\mathcal{T}_{src},\mathcal{T}_{tgt})$
\While{$\mathcal{S}$ not empty}
  \State Pop $(s,t)$ from $\mathcal{S}$
  \State Compute $d_{\min}^2(s,t)$ and $d_{\max}^2(s,t)$ from the two bounding boxes
  \If{ $d_{\max}^2(s,t) \le \nu\,d_{\min}^2(s,t)$}
    \For{each target leaf $\ell$ contained in node $t$}
      \State Add $(s,\alpha=1/d_{\min}^2(s,\ell))$ to $\mathcal{A}_\ell$
    \EndFor
    \State \textbf{continue}
  \EndIf
  \If{$s$ is a leaf \textbf{and} $t$ is a leaf}
    \State Let $\ell$ be the target leaf corresponding to $t$
    \State Add $s$ to $\mathcal{E}_\ell$
    \State \textbf{continue}
  \EndIf
  \If{size$(t) \ge$ size$(s)$}
    \For{each child $t'$ of $t$} \State Push $(s,t')$ to $\mathcal{S}$ \EndFor
  \Else
    \For{each child $s'$ of $s$} \State Push $(s',t)$ to $\mathcal{S}$ \EndFor
  \EndIf
\EndWhile
\end{algorithmic}
\end{algorithm}

An important feature of this approach is that the quadtrees and interaction lists are computed once during preprocessing and can be reused across all iterations of RPLU. In numerical experiments, we observe that the online phase is typically $5$--$20\times$ faster than the precomputation phase.

Given the precomputed interaction lists and generators $\mat{G},\mat{B}$, the online phase proceeds as follows.
First, compute the Gram matrices $\mat{H}_s \coloneqq \sum_{j\in s}\mat{B}_{:,j}\mat{B}_{:,j}^\ast$ for all source nodes $s$ via a bottom-up traversal of the source quadtree.
Then, for each target leaf $\ell$ and each target point $i\in \ell$, the upper bound is given by
\[
u_i \;=\;
\sum_{(s,\alpha)\in\mathcal{A}_\ell} \alpha\ \mat{G}_{i,:}\mat{H}_s\mat{G}_{i,:}^\ast
\;+\;
\sum_{s\in\mathcal{E}_\ell}\ \sum_{j\in s}\ \frac{\bigl|\mat{G}_{i,:}\mat{B}_{:,j}\bigr|^2}{|x_i-y_j|^2}.
\]

\begin{algorithm}[H]
\caption{Compute certified upper bounds}\label{alg:gbub-online}
\begin{algorithmic}[1]
\State \textbf{Input:} precomputed plan $(\mathcal{A}_\ell,\mathcal{E}_\ell)$, generators $\mat{G}\in\mathbb{C}^{n\times p}$, $\mat{B}\in\mathbb{C}^{p\times m}$
\State \textbf{Output:} $u\in\mathbb{R}^n$ satisfying~\eqref{eq:gbub-goal}
\State Compute Gram matrices $\mat{H}_s \coloneqq \sum_{j\in s}\mat{B}_{:,j}\mat{B}_{:,j}^\ast$ for all source nodes $s$ via bottom-up traversal
\State Initialize $u \gets 0_n$
\For{each target leaf $\ell$}
  \For{each $(s,\alpha)\in\mathcal{A}_\ell$}
    \For{each $i\in \ell$}
      \State $u_i \gets u_i + \alpha\ \mat{G}_{i,:}\mat{H}_s\mat{G}_{i,:}^\ast$
    \EndFor
  \EndFor
  \For{each source leaf $s\in\mathcal{E}_\ell$}
    \For{each $j\in s$}
      \For{each $i\in \ell$}
        \State $u_i \gets u_i + \dfrac{\bigl|\mat{G}_{i,:}\mat{B}_{:,j}\bigr|^2}{|x_i-y_j|^2}$
      \EndFor
    \EndFor
  \EndFor
\EndFor
\State \Return $u$
\end{algorithmic}
\end{algorithm}

\bibliographystyle{alpha}
\bibliography{sample}

\end{document}